\newcommand{\epsi}{\varepsilon}
\newcommand{\numbersystem}[1]{\mathbb{#1}}
\newcommand{\R}{\numbersystem{R}}
\newcommand{\abs}[1]{\left\lvert#1\right\rvert}
\newcommand{\myangle}{\sphericalangle}
\newcommand{\length}[1]{\left\lvert#1\right\rvert}
\newcommand{\card}[1]{\lvert#1\rvert}
\newcommand{\define}[1]{\emph{#1}}
\theoremstyle{plain}
\newtheorem{theorem}{Theorem}
\newtheorem{lemma}[theorem]{Lemma}
\newtheorem{corollary}[theorem]{Corollary}
\newtheorem{proposition}[theorem]{Proposition}
\newtheorem*{stabilitytheorem}{Stability Theorem}
\theoremstyle{definition}
\begin{document}

\bibliographystyle{amsplain}

\title{Unit distances and diameters in Euclidean spaces}
\author{Konrad J. Swanepoel}
\thanks{This material is based upon work supported by the South African National Research Foundation.}
\address{Department of Mathematical Sciences,
        University of South Africa, PO Box 392,
        Pretoria 0003, South Africa}
\email{\texttt{swanekj@unisa.ac.za}}

\begin{abstract}
We show that the maximum number of unit distances or of diameters in a set of $n$ points in $d$-dimensional Euclidean space is attained only by specific types of Lenz constructions, for all $d\geq 4$ and $n$ sufficiently large, depending on $d$.
As a corollary we determine the exact maximum number of unit distances for all even $d\geq 6$, and the exact maximum number of diameters for all $d\geq 4$, for all $n$ sufficiently large, depending on $d$.
\end{abstract}
\maketitle

\section{Introduction}

\subsection{Unit distances}
For a finite subset $S$ of Euclidean $d$-space $\R^d$ let $u(S)$ denote the number of pairs of points in $S$ at distance $1$.
Define \[u_d(n) = \max \{u(S): S\subset\R^d, \card{S}=n\}. \]

Erd\H{o}s initiated the study of $u_2(n)$ in \cite{Erdos46} and the higher-dimensional case of $u_d(n)$, $d\geq 3$, in \cite{Erdos60}.
The cases $d=2$ and $d=3$ are the most difficult.
Erd\H{o}s \cite{Erdos46} obtained the superlinear lower bound
\[ u_2(n) \geq n^{1+\frac{c}{\log\log n}},\]
which he conjectured to be tight \cite{MR80i:05001, MR88f:52011, MR92f:11003, MR1476428}.
The best known upper bound is \[ u_2(n) \leq cn^{4/3},\]
due to Spencer, Szemer\'edi and Trotter \cite{SST}.
See Sz\'ekely \cite{Szekely} for a particularly simple proof.

For $d=3$ the known lower (Erd\H{o}s \cite{Erdos60}) and upper bounds (Clarkson et al.\ \cite{Clarksonetal}) are:
\[ cn^{4/3}\log\log n \leq u_3(n) \leq  cn^{3/2}\beta(n),\]
where $\beta(n)$ is an extremely slowly growing function related to the inverse Ackerman function.

For $d\geq 4$ (the subject of this paper) the situation changes drastically.
Lenz, as reported in \cite{Erdos60}, observed that if we take $p:=\lfloor d/2\rfloor$ circles in pairwise orthogonal $2$-dimensional subspaces, each with centre the origin and radius $1/\sqrt{2}$, then any two points on different circles are at unit distance.
Therefore, if $n$ points are chosen by taking $n/p+O(1)$ points on each circle, $\frac{p-1}{2p}n^2-O(1)$ unit distances are obtained.
Erd\H{o}s \cite{Erdos60} showed that since $K_{p+1}(3)$, the complete $(p+1)$-partite graph with three vertices in each class, does not occur as a unit-distance graph in $\R^d$, the Erd\H{o}s-Stone theorem gives:
\[ u_d(n) = \frac{p-1}{2p}n^2+o(n^2) \text{ for all } d\geq 4.\]

Using an extremal graph theory result of Erd\H{o}s \cite{Erdos63} and Simonovits \cite{Simonovits}, Erd\H{o}s \cite{Erdos67} determined the exact value of $u_d(n)$ for $d$ even and $n$ a sufficiently large (depending on $d$) multiple of $2d=4p$.
The $n/p$ points on each circle are then taken to be the vertices of $n/(4p)$ squares.
This determines $u_d(n)$ asymptotically for all sufficiently large $n$ up to a $O(1)$ term (still for $d$ even).
Brass \cite{Brass}, together with a number theoretical result of Van Wamelen \cite{VanWamelen}, determined $u_4(n)$ completely.
For $n\geq 5$,
\begin{align*}
u_4(n) &= \begin{cases}
\lfloor n^2/4\rfloor + n & \text{if $n$ is divisible by $8$ or $10$,}\\
\lfloor n^2/4\rfloor + n - 1 & \text{otherwise.}
\end{cases}
\end{align*}

For odd $d\geq 5$ Erd\H{o}s and Pach \cite{ErdosPach} showed that
\[ u_d(n) = \frac{p-1}{2p}n^2 + \Theta(n^{4/3}). \]
For the lower bound they observed that the Lenz construction can be improved when $d$ is odd by replacing one of the circles by a $2$-sphere of radius $1/\sqrt{2}$ in a $3$-dimensional space orthogonal to the other $2$-dimensional subspaces and by placing the points on the sphere such that the unit distance occurs at least $cn^{4/3}$ times (a construction of Erd\H{o}s, Hickerson and Pach \cite{EHP}).
For the upper bound they used a stability result in extremal graph theory \cite[Chapter 5, remark 4.5(ii)]{Bollobas} together with the fact that the maximum number of unit distances among $n$ points on a $2$-sphere is $O(n^{4/3})$ \cite{Clarksonetal}.

\subsection{Diameters}
For a finite subset $S$ of $\R^d$  we call a pair of points in $S$ a \define{diameter} if their distance equals the diameter of $S$.
Let $M(S)$ denote the number of diameters in $S$.
Define \[M_d(n) = \max \{M(S): S\subset\R^d, \card{S}=n\}. \]
Erd\H{o}s in \cite{Erdos46} showed that $M_2(n)=n$ for $n\geq 3$.
V\'azsonyi conjectured, as reported in \cite{Erdos46}, that $M_3(n)=2n-2$ for $n\geq 4$.
This was independently proved by Gr\"unbaum \cite{Grunbaum}, Heppes \cite{Heppes} and Straszewicz \cite{Str}.
For a new proof, see \cite{Sw-Vazsonyi}.

As in the case of unit distances, the situation is completely different when $d\geq 4$.
Erd\H{o}s \cite{Erdos60} showed that for $d\geq 4$, $M_d(n)=\frac{p-1}{2p}n^2+o(n^2)$, the same asymptotics as $u_d(n)$.
For other work on this problem by Hadwiger, Lenz and Yugai, see the survey of Martini and Soltan \cite{MS}.

\section{New results}\label{section2}
If $d\geq 4$ is even, let $p=d/2$ and consider any orthogonal decomposition $\R^d=V_1\oplus+\dots\oplus V_p$, where each $V_i$ is $2$-dimensional.
In each $V_i$, let $C_i$ be the circle with centre the origin $o$ and radius $r_i$ such that $r_i^2+r_j^2=1$ for all distinct $i$ and $j$.
When $d\geq 6$ this implies that each $r_i=1/\sqrt{2}$.
We define a \define{Lenz configuration} to be any translate of a finite subset of $\bigcup_{i=1}^p C_i$.

If $d\geq 5$ is odd, let $p=\lfloor d/2\rfloor$, and consider any orthogonal decomposition $\R^d=V_1\oplus\dots\oplus V_p$, where $V_1$ is $3$-dimensional and each $V_i$ ($i=2,\dots,p$) is $2$-dimensional.
Let $\Sigma$ be the sphere in $V_1$ with centre $o$ and radius $r_1$, and for each $i=2,\dots,p$, let $C_i$ be the circle with centre $o$ and radius $r_i$, such that $r_i^2+r_j^2=1$ for all distinct $i, j$.
When $d\geq 7$, necessarily each $r_i=1/\sqrt{2}$.
We define a \define{Lenz configuration} to be any translate of a finite subset of $\Sigma\cup\bigcup_{i=2}^p C_i$.
(Later we distinguish between \emph{weak} and \emph{strong} Lenz configurations as a technical notion inside the proofs. The definition here coincides with a strong Lenz construction in the sequel).

We call a set $S$ of $n$ points in $\R^d$ an \define{extremal set} with respect to unit distances [diameters] if $u(S)=u_d(n)$ [$M(S)=M_d(n)$].

\begin{theorem}\label{maintheorem}
For each $d\geq 4$ there exists $N(d)$ 
such that all extremal sets of $n\geq N(d)$ points 
\textup{(}with respect to unit distances or 
diameters\textup{)} are Lenz configurations.
\end{theorem}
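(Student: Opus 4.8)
The plan is to combine extremal graph theory with geometric rigidity, using a stability argument followed by a progressive induction. Fix $d\ge 4$, put $p=\lfloor d/2\rfloor$, and in the diameter case normalise so that $\operatorname{diam}(S)=1$. For an extremal $n$-point set $S$ let $G=G(S)$ be the graph on $S$ whose edges are the pairs at distance $1$ (which in the diameter case are exactly the diameters). By the Lenz construction $e(G)\ge\frac{p-1}{2p}n^{2}-O(1)$, and by Erd\H{o}s's observation $G$ contains no $K_{p+1}(3)$; since $\mathrm{ex}(n,K_{p+1}(3))=\bigl(\tfrac{p-1}{2p}+o(1)\bigr)n^{2}$ by the Erd\H{o}s--Stone theorem, $G$ is asymptotically extremal, so the Erd\H{o}s--Simonovits stability theorem yields a partition $S=S_{1}\cup\dots\cup S_{p}$ with $\card{S_{i}}=n/p+o(n)$ in which all but $o(n^{2})$ pairs between distinct classes are edges and all but $o(n^{2})$ pairs inside one class are non-edges. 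This approximate structure is the starting point; the real content is upgrading it to an exact statement.

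The second ingredient is a geometric rigidity lemma, which I would isolate separately: if $A,B\subset\R^{d}$ each have at least three points and every pair $\{a,b\}$ with $a\in A$, $b\in B$ has distance $1$, then $\conv A$ and $\conv B$ have orthogonal direction spaces, $A$ lies on a sphere in $\mathrm{aff}(A)$, $B$ on a sphere in $\mathrm{aff}(B)$, and after a translation both spheres are centred at the origin with radii $r_{A},r_{B}$ satisfying $r_{A}^{2}+r_{B}^{2}=1$; the point is that $\norm{a-b}=\norm{a'-b}=\norm{a-b'}$ forces $\ipr{a-a'}{b-b'}=0$. Applied to the classes $S_{1},\dots,S_{p}$ (after discarding the few pairs that disobey the partition), this gives that the affine hulls $\mathrm{aff}(S_{i})$ have pairwise orthogonal direction spaces, each of dimension at least $2$ (a line meets a sphere in at most two points, while $\card{S_{i}}\to\infty$), so their dimensions sum to at most $d$; hence for even $d$ every class spans a $2$-flat and lies on a circle, while for odd $d$ all but one span $2$-flats and the remaining one spans a $2$-flat or a $3$-flat, lying on a circle or a $2$-sphere. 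The spheres are concentric about a common centre (after a translation), and $\norm{a_{i}-a_{j}}^{2}=r_{i}^{2}+r_{j}^{2}=1$ for $a_{i}\in S_{i}$, $a_{j}\in S_{j}$, forcing the radius relations in the definition of a Lenz configuration. Thus a \emph{perfectly} clean partition would already exhibit $S$ as a Lenz configuration; it remains to show the partition can be taken perfectly clean.

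This clean-up is where the main difficulty lies, and I would run the Principle of Progressive Induction on $n$, assuming both the theorem and the exact values of $u_{d}$, $M_{d}$ for all smaller but still large point counts. Step one is a minimum-degree bound: deleting any vertex $x$ leaves $e(G)-\deg_{G}(x)$ edges on $n-1$ points, so $\deg_{G}(x)\ge e(G)-u_{d}(n-1)$; by the inductive hypothesis $u_{d}(n-1)$ equals the explicitly known maximum of $u$ over $(n-1)$-point Lenz configurations, while the Lenz lower bound gives $e(G)=u_{d}(n)\ge$ the corresponding maximum over $n$-point Lenz configurations, and those two maxima differ by $\tfrac{p-1}{p}n-O(1)$; hence every point of $S$ has at most $n/p+O(1)$ non-neighbours. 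Step two localises the structure: since $G$ is $K_{p+1}(3)$-free and $x$ is joined to all of $N(x)$, the graph $G[N(x)]$ is $K_{p}(3)$-free and, having $\card{N(x)}=\tfrac{p-1}{p}n+O(1)$ vertices and minimum degree within $O(1)$ of that of $T_{p-1}(\card{N(x)})$, is asymptotically extremal, while $N(x)$ lies on a sphere about $x$; stability one dimension lower splits $N(x)$ into $p-1$ nearly complete cross-classes on subspheres with pairwise orthogonal affine hulls, which must be $p-1$ of the circles $C_{i}$ (resp.\ $\Sigma$ together with $p-2$ of the $C_{i}$). Varying $x$ pins down $\bigcup_{i}C_{i}$ (resp.\ $\Sigma\cup\bigcup_{i}C_{i}$) exactly, and then a local exchange argument closes the proof: a point of $S$ lying off this union has strictly fewer than $\tfrac{p-1}{p}n-O(1)$ neighbours in $G$, so replacing it by a best-placed point on one of the circles strictly increases $e(G)$, contradicting extremality unless $S$ already lies on the union. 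I expect the bookkeeping of how the $O(1)$-many exceptional points enter the inductive estimates to be the genuinely delicate part; a one-shot exchange argument fails precisely because each such point perturbs the count only at lower order, which is why the exact value $u_{d}(n-1)$ (resp.\ $M_{d}(n-1)$), furnished by the induction, is needed to close the estimate. Finally, the exact formulae of the corollary follow by maximising $u$ or $M$ over Lenz configurations---distributing $n$ points among the $p$ circles and, for unit distances, among inscribed squares on each circle---which is the Erd\H{o}s--Simonovits/Van Wamelen computation.
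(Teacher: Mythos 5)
Your overall strategy --- Erd\H{o}s--Stone to show the unit-distance graph is asymptotically extremal, Erd\H{o}s--Simonovits stability to get an approximate $p$-partition, the rigidity lemma ($\length{ab}=\length{a'b}=\length{ab'}=1$ forcing orthogonality and $r_A^2+r_B^2=1$) to place the classes on concentric orthogonal circles/spheres, and an exchange argument to absorb the exceptional points --- is exactly the paper's. The extra scaffolding you add (progressive induction to get $u_d(n-1)$ exactly, hence a minimum-degree bound, hence a stability analysis of $G[N(x)]$) is not needed: the paper's exchange is one-shot, because a point $x$ with $u(x,S_i)\leq 2$ for two classes has degree at most $(1-\frac2p+O(\epsi))n$ while a point on a circle has degree at least $(1-\frac1p-O(\epsi))n$, a first-order gap. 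Your stated reason for invoking induction (``a one-shot exchange fails because each exceptional point perturbs the count only at lower order'') is false at this stage. For even $d$ your argument, stripped of the induction, is essentially the paper's proof and is correct, except that you never address the diameter case's constraint that the replacement point $x'$ must not increase the diameter of $S$; the paper devotes a nontrivial argument to choosing $x'$ between two consecutive points of $S_1$ on an arc where no point of $S_0$ is too far away.

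The genuine gap is in odd dimensions. Your closing step asserts that a point of $S$ off the union $\Sigma\cup\bigcup_i C_i$ has degree below $\frac{p-1}{p}n-O(1)$ and is killed by exchange. This is wrong: a point lying on the $2$-sphere $\Sigma_i\supset C_i$ spanned by $C_i$ and the axis orthogonal to all the $V_j$ is at unit distance to \emph{every} point of every other circle $C_j$, so its degree is $\frac{p-1}{p}n-o(n)$ and the exchange gains only a lower-order quantity. This is precisely why the paper introduces \emph{weak} Lenz configurations (subsets of $\Sigma_1\cup\dots\cup\Sigma_p$), proves in Section~\ref{extremal} only that extremal sets are weak Lenz configurations, and then needs the separate analysis of Section~\ref{optimised} showing that optimised weak Lenz configurations are strong --- an argument that for unit distances rests on the $\Theta(m^{4/3})$ bounds for unit distances on a $2$-sphere (so that moving a low-degree point of $S_i\setminus C_i$ onto $C_i$ gains $\sum_{j\neq i}\card{S_j\setminus C_j}$ against a loss of $O(n^{1/3})$), and for diameters on exact counts via Lemma~\ref{lowdim}. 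Your induction cannot substitute for this in the unit-distance case, since even granting the theorem for $n-1$ points the exact value $u_d(n-1)$ remains unknown for odd $d$ (it depends on the unresolved maximum number of unit distances among $m$ points on a $2$-sphere), so ``the explicitly known maximum over $(n-1)$-point Lenz configurations'' does not exist. Without the weak/strong dichotomy and the lower-order optimisation, your proof does not establish the theorem for odd $d\geq 5$.
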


The proof uses a typical technique in extremal graph and hypergraph theory \cite{Simonovits, FS, KS, MP}:
First prove a stability result for sets that are close to extremal, and then deduce more exact structural information from extremality.

For even $d\geq 6$ it is then possible to determine $u_d(n)$ exactly.
On the other hand, for odd $d\geq 5$ the main obstacle to determine $u_d(n)$ is our lack of knowledge of the function $f(m)$ which gives the exact maximum number of unit distances between $m$ points on a $2$-sphere of radius $1/\sqrt{2}$ (for odd $d\geq 7$) and the function $g(m)$ which gives the exact maximum number of unit distances between $m$ points on a sphere of arbitrary radius \cite{EHP, SV} (for $d=5$).

Let \define{$t_p(n)$} denote the number of edges of the \emph{Tur\'an $p$-partite graph} on $n$ vertices.
This is the complete $p$-partite graph with $\lfloor n/p\rfloor$ or $\lceil n/p\rceil$ in each class \cite[Chapter VI]{Bollobas}.
We do not need the exact value of $t_p(n)$, only that
\[ t_p(n) = \frac{p-1}{2p}n^2 -O(1). \]
\begin{corollary}\label{cor2}
Let $d\geq 6$ be even.
For all sufficiently large $n$ \textup{(}depending on $d$\textup{)},
\begin{align*}
u_d(n) &= \begin{cases}
t_p(n)+n-r & \text{if}\quad 0\leq r\leq p-1,\\
t_p(n)+n-p & \text{if}\quad p\leq r\leq 3p-1,\\
t_p(n)+n-2d+r & \text{if}\quad 3p\leq r\leq 4p-1,
\end{cases}
\end{align*}
where $p=d/2$ and $r$ is the remainder when dividing $n$ by $4p=2d$.
\end{corollary}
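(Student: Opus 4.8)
The plan is to combine Theorem~\ref{maintheorem} with a short self-contained optimisation over the part sizes of a Lenz configuration. Fix $d\ge 6$ even and $n\ge N(d)$. By Theorem~\ref{maintheorem} every extremal set with respect to unit distances is a Lenz configuration, and since $d\ge 6$ every circle $C_i$ has radius $1/\sqrt2$; conversely every Lenz configuration is an admissible $n$-point set, so $u_d(n)$ equals the maximum number of unit distances in a Lenz configuration on $n$ points. In such a configuration, with $m_i$ points on $C_i$ and $\sum_{i=1}^p m_i=n$, any two points on different circles are at unit distance, so the number of unit distances is $\sum_{1\le i<j\le p}m_im_j+\sum_{i=1}^p w(m_i)$, where $w(m)$ is the maximum number of unit distances realised by $m$ points on a single circle of radius $1/\sqrt2$. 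Two points on such a circle are at unit distance exactly when their angular positions differ by $\pi/2$. Partitioning the circle into classes of positions congruent modulo $\pi/2$, there are no unit distances between distinct classes, and the $s\le 4$ points of a given class span a subgraph of the $4$-cycle on the four positions of that class, hence at most $s$ unit distances, with equality only when $s=4$. Summing, $w(m)=m$ when $4\mid m$ and $w(m)=m-1$ otherwise, and both values are attained (fill $\lfloor m/4\rfloor$ classes completely and place the remaining points in one more class). Therefore
\[
u_d(n)=\max\Bigl\{\,\sum_{1\le i<j\le p}m_im_j+\sum_{i=1}^p w(m_i)\ :\ m_i\ge 0,\ \sum_{i=1}^p m_i=n\,\Bigr\}.
\]

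Next I would make this optimisation separable. Set $g(m):=\tfrac12 m^2-w(m)$, so that $g(m)=\tfrac12 m^2-m$ if $4\mid m$ and $g(m)=\tfrac12 m^2-m+1$ otherwise. Since $\sum_{i<j}m_im_j=\tfrac12\bigl(n^2-\sum_i m_i^2\bigr)$, the objective equals $\tfrac12 n^2-\sum_i g(m_i)$, so it remains to minimise $\sum_i g(m_i)$ subject to $\sum_i m_i=n$. A short computation of the second difference shows $g(m+1)-2g(m)+g(m-1)\ge 0$ for every $m\ge 1$, i.e.\ $g$ is discretely convex. Hence an exchange argument applies: if a minimiser has two parts with $m_i\ge m_j+2$, then moving one point from part $i$ to part $j$ does not increase $\sum_i g(m_i)$ (the forward differences of $g$ are nondecreasing) while it strictly decreases $\sum_i m_i^2$; iterating, we reach a Tur\'an $p$-partition, which is therefore a minimiser. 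Writing $m_1^{\mathrm T},\dots,m_p^{\mathrm T}$ for its parts and $k$ for the number of them not divisible by $4$, and using $t_p(n)=\tfrac12\bigl(n^2-\sum_i (m_i^{\mathrm T})^2\bigr)$, we obtain
\[
u_d(n)=\tfrac12 n^2-\sum_{i=1}^p g(m_i^{\mathrm T})=\tfrac12\Bigl(n^2-\sum_{i=1}^p (m_i^{\mathrm T})^2\Bigr)+n-k=t_p(n)+n-k.
\]

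It then remains to compute $k$. Write $n=pa+b$ with $0\le b<p$; the Tur\'an partition has $b$ parts equal to $a+1$ and $p-b$ parts equal to $a$. With $r$ the remainder of $n$ upon division by $4p=2d$, one has $b=r\bmod p$ and $a\equiv\lfloor r/p\rfloor\pmod 4$, where $\lfloor r/p\rfloor\in\{0,1,2,3\}$. If $\lfloor r/p\rfloor=0$, i.e.\ $0\le r\le p-1$, the parts not divisible by $4$ are exactly the $b=r$ parts of size $a+1$, so $k=r$. If $\lfloor r/p\rfloor\in\{1,2\}$, i.e.\ $p\le r\le 3p-1$, both values $a$ and $a+1$ are $\not\equiv 0\pmod 4$, so $k=p$. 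If $\lfloor r/p\rfloor=3$, i.e.\ $3p\le r\le 4p-1$, the parts not divisible by $4$ are the $p-b$ parts of size $a$, so $k=p-b=4p-r=2d-r$. Substituting these into $u_d(n)=t_p(n)+n-k$ yields the three cases of the corollary.

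The work that genuinely requires care is twofold: first, verifying that the within-circle unit-distance graph decomposes exactly as claimed and that a complete class of size $4$ is the only local pattern achieving the ratio of one unit distance per point, so that the formula for $w(m)$ is tight and not merely an upper bound; and second, the modular bookkeeping for $k$ across the three ranges of $r$, in particular checking that the formulas agree at the boundary values $r=p$ and $r=3p$. The substantial structural input---that an extremal set must be a Lenz configuration at all---is supplied by Theorem~\ref{maintheorem}.
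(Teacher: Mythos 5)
Your proposal is correct and follows essentially the paper's route: Theorem~\ref{maintheorem} reduces the problem to Lenz configurations on $p$ mutually orthogonal circles of radius $1/\sqrt{2}$, Lemma~\ref{lowdim}\eqref{a} supplies the within-circle count $w(m)$, and what remains is exactly the optimisation over part sizes that the paper's proposition dismisses as an ``easy but tedious exercise.'' Your discrete-convexity/exchange argument for that optimisation and the modular bookkeeping for $k$ both check out, including agreement at the boundary values $r=p$ and $r=3p$.
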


For all $d\geq 4$ it is possible to determine $M_d(n)$ exactly if $n$ is large.
The most complicated case is $d=5$, where it is necessary to know the maximum number of diameters in a set of $n$ points on a $2$-sphere in $\R^3$.
For each $n\geq 6$ we construct a set of $n$ points in $\R^3$ with $2n-2$ diameters, all lying on a sphere (see Lemma~\ref{lowdim}\eqref{d} below).

\begin{corollary}\label{cor3}
For all sufficiently large $n$ \textup{(}depending on $d$\textup{)},
\begin{align*}
M_4(n) &= \begin{cases}
t_2(n)+\lceil n/2\rceil + 1 & \text{if}\quad n\not\equiv 3\pmod{4},\\
t_2(n)+\lceil n/2\rceil    & \text{if}\quad n\equiv 3\pmod{4};
\end{cases}\\
M_5(n) &= t_2(n) +n;\\
M_d(n) &= t_p(n) + p\quad\text{for even $d\geq 6$, where $p=d/2$;}\\
M_d(n) &= t_p(n) + \lceil n/p\rceil +p-1\quad\text{for odd $d\geq 7$, where $p=\lfloor d/2\rfloor$.}
\end{align*}
\end{corollary}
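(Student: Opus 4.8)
Throughout, fix $d\ge4$, set $p=\lfloor d/2\rfloor$, and assume $n\ge N(d)$, so that by Theorem~\ref{maintheorem} every extremal set $S$ for diameters is a Lenz configuration; applying an isometry of $\R^d$ (and discarding the irrelevant translation) we may take $S$ inside a fixed standard configuration with components $C_1,\dots,C_p$ (even $d$) or $\Sigma,C_2,\dots,C_p$ (odd $d$). The one elementary fact we need is that two points of $S$ on \emph{different} components are at distance exactly $1$ (orthogonality and $r_i^2+r_j^2=1$), whereas two points on the same component are at distance at most $2r_i$ (at most $2r_1$ on $\Sigma$). Thus, once at least two components are occupied, $\operatorname{diam}(S)\ge1$, and we split on whether this is an equality. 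If $\operatorname{diam}(S)>1$, no cross pair is a diameter, so $M(S)$ is at most the total number of within-component diameter pairs, which is $M_3(m_\Sigma)+\sum_i M_2(m_i)=O(n)$ by Lemma~\ref{lowdim}; the same estimate covers configurations on a single component, or on fewer than $p$ components (where the cross count falls to $t_{p-1}(n)+O(n)$). Since $O(n)\ll t_p(n)$ for $n$ large, every extremal set in fact has $\operatorname{diam}(S)=1$, all within-component distances are $\le1$, and
\[
M(S)=\sum_{i<j}m_im_j+\sum_k \nu_k,\qquad \sum_{i<j}m_im_j=\tfrac12\Bigl(n^2-\sum_i m_i^2\Bigr)\le t_p(n),
\]
with equality in the second relation exactly for a Tur\'an distribution of the sizes $m_i$; here $\nu_k$ is the number of pairs at distance exactly $1$ among the points on the $k$-th component.

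The core of the argument is a short list of geometric lemmas estimating $\nu_k$ for a component of given radius, subject to all of its occupied points being at pairwise distance $\le1$. \emph{(i)} On a circle of radius $r\ge1/\sqrt2$ (forced when $d\ge6$, and holding for the companion circle when $d=4$) the points have all central angles $\le 2\arcsin\frac{1}{2r}\le90^\circ$, hence lie in an arc of that central angle, so $\nu_k\le1$. \emph{(ii)} On a circle of \emph{free} radius $r\in(\tfrac12,\tfrac1{\sqrt2})$ (possible only for $d=4$, where the two radii need not both equal $1/\sqrt2$), $\nu_k\le M_2(m)=m$, attained by the regular $m$-gon with $m$ odd inscribed in the circle of radius $\bigl(2\cos\tfrac{\pi}{2m}\bigr)^{-1}$, whose diameter is $1$. \emph{(iii)} On a sphere of radius $1/\sqrt2$ (forced when $d\ge7$), $\nu_k\le m$, attained by one point together with $m-1$ points spanning a $90^\circ$ arc of its polar great circle. \emph{(iv)} On a sphere of free radius (possible for $d=5$), $\nu_k\le M_3(m)=2m-2$, attained by the configuration of Lemma~\ref{lowdim}\eqref{d} rescaled to diameter $1$ and placed on the sphere of its circumradius---which is $<1/\sqrt2$, forcing the companion circle to have radius $>1/\sqrt2$ and hence $\nu\le1$. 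Taking into account the compatibility constraint that all chosen radii allow these unit-diameter pieces to coexist with a common diameter $1$, we are reduced to maximising $\tfrac12\bigl(n^2-\sum_i m_i^2\bigr)+\sum_i b(m_i)$ over $m_1+\dots+m_p=n$, where $b$ is the relevant per-component surplus: $b\le1$ on a forced circle, $b\le m$ on the free circle or the forced sphere, $b\le2m-2$ on the free sphere.

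Performing this optimisation yields the four formulas. For even $d\ge6$ all surpluses are $\le1$, so $M_d(n)\le t_p(n)+p$, attained by a Tur\'an distribution carrying two $90^\circ$-apart points on each circle; a balanced distribution is optimal because the $O(1)$ surplus cannot make up the $\Omega(n^2)$ loss from unbalancing. For $d=5$ one maximises $m_\Sigma m_C+(2m_\Sigma-2)+1$ over $m_\Sigma+m_C=n$ and obtains, for both parities of $n$, the value $t_2(n)+n$. For odd $d\ge7$ one keeps the sphere as one of the largest Tur\'an classes, of size $\lceil n/p\rceil$ and surplus $\lceil n/p\rceil$, with each of the $p-1$ circles contributing surplus $1$, giving $t_p(n)+\lceil n/p\rceil+p-1$ (when $p\mid n$ there is a short family of co-optimal distributions, all giving the same value). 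For $d=4$ one maximises $m_1m_2+m_1+1$ with $m_1$ \emph{odd} over $m_1+m_2=n$; the unconstrained optimum $m_1=(n+1)/2$ is replaced by the nearest admissible odd value, and pushing the resulting correction through the exact value of $t_2(n)$ produces the stated dependence on $n\bmod4$. In every case the constructions listed above meet the bound, so all the inequalities are equalities.

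I expect the real difficulty to lie in lemma \emph{(iii)}: on the radius-$1/\sqrt2$ sphere a set with all distances $\le1$ need not lie in a small cap (three pairwise orthogonal points already show this), so its diameter graph can have high-degree vertices, and ruling out any arrangement better than ``pole plus quarter great-circle''---with a constant sharp enough to produce exactly the $\lceil n/p\rceil$ in the formula---requires genuine work rather than the arc argument used for circles. The second delicate point is the integer optimisation for $d=4$ and $d=5$: there the per-component surplus is itself linear in the class size, so it competes on equal terms with the cross-pair count, and the exact location of the optimum---hence the residue dependence of $M_4(n)$ and the precise constant $n$ in $M_5(n)$---is governed entirely by second-order rounding, forcing the use of exact values of $t_p(n)$ in place of the asymptotic $t_p(n)=\tfrac{p-1}{2p}n^2-O(1)$.
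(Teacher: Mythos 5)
Your overall route is the paper's: invoke Theorem~\ref{maintheorem} to reduce to Lenz configurations, observe that for large $n$ the diameter must equal $1$ so that every cross-component pair is a diameter, bound the within-component contributions by the statements of Lemma~\ref{lowdim}, and finish with an integer optimisation. Two remarks on the comparison. First, your lemmas (i)--(iv) are not new difficulties: (iii) is exactly Lemma~\ref{lowdim}\eqref{e} (the ``two diameters drawn as short great-circle arcs must cross'' argument), and (ii), (iv) are Lemma~\ref{lowdim}\eqref{b}, \eqref{d}; the only genuinely hard geometric input is the attaining construction of $2m-2$ diameters on a sphere for odd $m$, which the paper proves inside Lemma~\ref{lowdim}\eqref{d}. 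Second, you start from \emph{strong} Lenz configurations by quoting Theorem~\ref{maintheorem} as stated, whereas the paper derives the corollary from the \emph{weak} form (two spheres, poles, the $k_ik_j$ bookkeeping) and proves strongness along the way in Section~\ref{optimised}; your shortcut is legitimate for a corollary and does simplify the computation.

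There is, however, one concrete gap, in the $d=5$ upper bound. Your reduction lists the per-component surpluses as ``$b\le 2m-2$ on the free sphere'' and ``$b\le m$ on the free circle'', but then for $d=5$ you maximise only $m_\Sigma m_C+(2m_\Sigma-2)+1$, i.e.\ you assume the circle contributes at most $1$. That assumption holds only when $r_2>1/\sqrt3$, equivalently $r_1<\sqrt{2/3}$. You must also treat the complementary regime $r_2\le 1/\sqrt3$, where the circle can carry up to $m_C$ diameters (Lemma~\ref{lowdim}\eqref{b}) but then $r_1\ge\sqrt{2/3}>1/\sqrt2$ forces the sphere's surplus down to $m_\Sigma$ by Lemma~\ref{lowdim}\eqref{e}, giving $m_\Sigma m_C+m_\Sigma+m_C=(m_\Sigma+1)(m_C+1)-1\le t_2(n+2)-1=t_2(n)+n$. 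Without this dichotomy the bounds you list would combine to $m_\Sigma m_C+(2m_\Sigma-2)+m_C\approx t_2(n)+\tfrac32 n$, which exceeds the claimed value; the whole point (and the reason the paper exhibits \emph{two} extremal constructions for $d=5$) is that the constraint $r_1^2+r_2^2=1$ makes the two ``rich'' regimes mutually exclusive while both cap out at exactly $t_2(n)+n$. Your parenthetical about ``compatibility of the radii'' gestures at this but does not carry it out. The remaining computations (the mod-$4$ analysis for $d=4$, the Tur\'an-with-largest-sphere-class optimisation for odd $d\ge7$, and the even case) check out against the propositions of Section~\ref{optimised}.
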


We use two stability theorems to prove Theorem~\ref{maintheorem}, one for even dimensions and one for odd dimensions.
\begin{theorem}\label{evenstable}
For each $\epsi>0$ and even $d\geq 4$ there exist $\delta>0$ and $N$ such that any set of $n\geq N$ points in $\R^d$ with at least $(\frac{p-1}{2p}-\delta)n^2$ unit distance pairs can be partitioned into $S_0,S_1,\dots,S_p$ such that $\card{S_0}<\epsi n$ and for each $i=1,\dots,p$,
\[ \frac{n}{p}-\epsi n < \card{S_i} < \frac{n}{p}+\epsi n\]
and $S_i$ is on a circle $C_i$, such that the circles $C_1,\dots,C_p$ have the same centre and are mutually orthogonal.
\end{theorem}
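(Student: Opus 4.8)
The plan is to run the standard stability machinery from extremal graph theory on the unit-distance graph $G$ of the point set $S$, using the key geometric input that $G$ contains no copy of $K_{p+1}(3)$, the complete $(p+1)$-partite graph with three vertices per part. First I would invoke the Erd\H{o}s--Simonovits stability theorem for $K_{p+1}(3)$-free graphs: since the Tur\'an density of $K_{p+1}(3)$ equals $\frac{p-1}{2p}$ (Erd\H{o}s--Stone), a $K_{p+1}(3)$-free graph on $n$ vertices with at least $(\frac{p-1}{2p}-\delta)n^2$ edges is, after deleting at most $\epsi' n$ vertices, $p$-partite with each part of size $(1/p\pm\epsi')n$; moreover one can arrange that between any two of the $p$ parts all but $o(n^2)$ pairs are edges and inside each part there are $o(n^2)$ edges. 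So up to a small exceptional set $S_0$ the graph looks like a blown-up $K_p$. This gives the combinatorial skeleton of the partition $S_0,S_1,\dots,S_p$ with the required size bounds; the remaining work is entirely to translate ``blown-up $K_p$'' into ``$p$ concentric mutually orthogonal circles.''

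For the geometric translation I would argue as follows. Fix $i\neq j$. Because almost every point of $S_i$ is at unit distance from almost every point of $S_j$, one can choose a large constant-sized ``robust core'' in each $S_i$ --- a subset of points each of which is joined to all but a $\sqrt{\epsi}$-fraction of every other part --- and work with those cores first. If $a,b\in S_i$ are both at unit distance from three affinely independent points $x,y,z\in S_j$, then $a$ and $b$ lie on the same circle, namely the intersection of the unit spheres about $x,y,z$; pushing this through, the robust core of $S_i$ lies on a single circle $C_i$ (a bounded configuration cannot be forced onto a higher-dimensional sphere because $K_{p+1}(3)$-freeness, applied within $\R^d$, rules out four or more ``independent directions'' simultaneously at unit distance --- this is exactly Lenz's observation in reverse). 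Next, the affine hull of $C_i$ and the affine hull of $C_j$ must be orthogonal and share a common centre: take points on $C_i$ that are unit-distance to a triangle on $C_j$; the locus forces the plane of $C_i$ to be orthogonal to the plane of $C_j$, and equating the ``centre'' computed from different choices shows all $C_i$ share a centre $o$. Finally I would absorb the rest of each $S_i$ onto $C_i$: any point $v\in S_i$ outside $C_i$ is at unit distance to at most a bounded number of points in each other part (again by the $K_{p+1}(3)$ obstruction, since $v$ together with the circles would create a forbidden configuration), so there are at most $O(\epsi) n$ such bad points, and they can be thrown into $S_0$; this keeps $\card{S_0}<\epsi n$ after renaming $\epsi$.

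The main obstacle I expect is the quantitative geometric rigidity step: showing that ``$a$ is at unit distance to $1-\sqrt{\epsi}$ fraction of $S_j$, for every $a$ in a large subset of $S_i$'' forces those points of $S_i$ onto an \emph{exact} circle with \emph{exactly} orthogonal, \emph{exactly} concentric partners --- rather than merely onto something within $\epsi$ of such a configuration. The clean way around this is to prove the exact statement only for a bounded ``core'' (where finitely many unit-distance equations pin down the circle precisely via the rank/dimension argument above), and then use that fixed core to discipline all the remaining points: a point at unit distance from enough core points of $C_j$ must lie on the perpendicular-bisector-type variety, which for three or more well-chosen core points is precisely $C_i$. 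Thus the proof splits into a soft combinatorial part (stability theorem, giving sizes and the sparse exceptional set) and a hard geometric part (exact rigidity for a bounded core, then propagation), and care is needed in ordering these so that the $\epsi$'s and $\delta$'s chain correctly: choose $\delta$ from the stability theorem, extract the core, derive the exact circles/orthogonality/concentricity from the core, and only then bound the leftover points to finalise $S_0$.
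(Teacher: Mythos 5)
Your plan follows essentially the same route as the paper: Erd\H{o}s--Simonovits stability applied to the $K_{p+1}(3)$-free unit-distance graph, followed by Lemma~\ref{geometry} and a dimension count to pin the parts onto mutually orthogonal concentric circles. The differences are in execution. The paper quotes the stability theorem in its per-vertex form (every $x\in S_i$ is joined to all but fewer than $\epsi n$ vertices of $G-S_i$), so no ``robust core'' extraction or final cleanup is needed: one directly picks $4$ hypothetically nonconcyclic points in $S_1$ and $3$ points in each other part, all cross-joined, and the dimension count ($3+2(p-1)>d$) gives the contradiction; then a $K_p(3)$ selection plus uniqueness of the circle through $3$ noncollinear points forces all of each $S_i$ onto $C_i$. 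Your version, starting from the weaker $o(n^2)$ formulation, is workable but costs you the extra core/propagation/cleanup layer and the parameter chaining you rightly flag. One local claim in your write-up is false as stated: for $d\geq 6$ the intersection of the unit spheres about three affinely independent points of $S_j$ is a $(d-3)$-sphere, not a circle, so two common unit-neighbours $a,b\in S_i$ are not thereby forced onto a common circle. You do patch this by appealing to the simultaneous orthogonality of all $p$ parts and the resulting dimension count, which is exactly the paper's mechanism, but the patch is the whole argument and the sphere-intersection step should be dropped. Finally, your worry about approximate versus exact rigidity evaporates in the paper's treatment, since every geometric constraint used is an exact unit-distance equation; no approximate configuration ever needs to be rounded to an exact one.
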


\begin{theorem}\label{oddstable}
For each $\epsi>0$ and odd $d\geq 5$ there exist $\delta>0$ and $N$ such that any set $S$ of $n\geq N$ points in $\R^d$ with at least $(\frac{p-1}{2p}-\delta)n^2$ unit distance pairs can be partitioned  into $S_0,S_1,\dots,S_p$ such that $\card{S_0}<\epsi n$ and for each $i=1,\dots,p$,
\[ \frac{n}{p}-\epsi n < \card{S_i} < \frac{n}{p}+\epsi n,\]
$S_1$ is on a $2$-sphere $\Sigma_1$, $S_i$ is on a circle $C_i$, $i=2,\dots,p$, and $\Sigma_1,C_2,\dots,C_p$ have the same centre and are mutually orthogonal.
\end{theorem}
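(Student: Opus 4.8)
The plan is the standard two-step scheme --- first a stability step in extremal graph theory, then a geometric rigidity step --- exactly as for Theorem~\ref{evenstable}, the only new feature being the presence of one $3$-dimensional orthogonal subspace. Let $G$ be the unit-distance graph on $S$. By Erd\H{o}s \cite{Erdos60} it is $K_{p+1}(3)$-free, so $\mathrm{ex}(n,K_{p+1}(3))=t_p(n)+o(n^2)=(\frac{p-1}{2p}+o(1))n^2$ by the Erd\H{o}s--Stone theorem. Fix a small auxiliary parameter $\epsi_1=\epsi_1(\epsi,d)>0$, to be pinned down at the end. The stability theorem for forbidden subgraphs (Erd\H{o}s--Simonovits; \cite{Simonovits}, \cite[Ch.~5]{Bollobas}) yields $\delta>0$ and $N_1$ such that whenever $n\ge N_1$ and $u(S)\ge(\frac{p-1}{2p}-\delta)n^2$, the set $S$ has a partition $S=V_1\cup\dots\cup V_p$ with $\abs{\,\card{V_i}-n/p\,}<\epsi_1 n$ for every $i$ and with fewer than $\epsi_1^2 n^2$ \emph{defect pairs}, a defect pair being a non-edge of $G$ between two classes or an edge of $G$ inside a class. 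Move to an exceptional set $S_0$ every point lying in at least $\epsi_1 n$ defect pairs; then $\card{S_0}<2\epsi_1 n$. Setting $T_i:=V_i\setminus S_0$, every point of $T_i$ is now at unit distance from all but fewer than $\epsi_1 n$ of the points of $\bigcup_{j\ne i}T_j$. Finally, no $T_i$ lies close to a line: a point in some $T_j$, $j\ne i$, has at most two unit-distance neighbours on any one line, so $T_i$ meets every line in fewer than $4\epsi_1 n$ points; in particular $\dim\mathrm{aff}(T_i)\ge 2$.

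For the rigidity step, in each $T_i$ pick a bounded set $P_i\subseteq T_i$, of size $O(d)$, that affinely spans $\mathrm{aff}(T_i)$; since the defect pairs incident to $\bigcup_i T_i$ form a sparse graph, a random such choice also makes every cross pair of $\bigcup_i P_i$ a unit distance, and one checks (using the previous paragraph) that the $P_i$ are non-degenerate. Apply the basic rigidity lemma for unit distances of Lenz and Erd\H{o}s \cite{Erdos60} (cf.\ \cite{Brass}): if $P_1,\dots,P_p$ are finite sets such that every pair of points from two distinct $P_i$'s is at unit distance, then there are a point $o$ and mutually orthogonal linear subspaces $U_1,\dots,U_p$ with $P_i$ lying on the sphere of radius $\rho_i$ centred at $o$ in $o+U_i$, and $\rho_i^2+\rho_j^2=1$ for all $i\ne j$; moreover one may take $U_i$ to be the direction space of $\mathrm{aff}(T_i)$, with $o\in\mathrm{aff}(T_i)$. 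Consequently $\sum_i\dim U_i\le d$; since each $\dim U_i\ge 2$ and $d=2p+1$, at most one $\dim U_i$ exceeds $2$ and that one equals $3$ (if all equal $2$, enlarge $U_1$ to a $3$-dimensional subspace orthogonal to $U_2,\dots,U_p$, which fits since $3+2(p-1)=d$). After relabelling we obtain a $2$-sphere $\Sigma_1=\{x\in o+U_1:\norm{x-o}=\rho_1\}$ and circles $C_i=\{x\in o+U_i:\norm{x-o}=\rho_i\}$ for $i\ge 2$, all concentric at $o$ and mutually orthogonal, containing $P_1,\dots,P_p$; when $d\ge 7$ the relations $\rho_i^2+\rho_j^2=1$ among the $p\ge3$ radii force every $\rho_i=1/\sqrt2$.

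Now pull the remaining points onto their loci. Enlarge $S_0$ by the points that fail to be at unit distance from all of $P_j$ for some $j$ in a class other than their own; each anchor point is missed by fewer than $\epsi_1 n$ points of $S$, so this adds only $O(\epsi_1 n)$ points. Let $v\in T_i$ still lie outside $S_0$, and fix any $j\ne i$. Since $P_j$ affinely spans $o+U_j$, lies on the sphere of radius $\rho_j$ about $o$, and $o\in\mathrm{aff}(T_j)$, the only point of $o+U_j$ equidistant from $P_j$ is its circumcentre $o$; hence $v$ being at unit distance from all of $P_j$ forces $v-o\perp U_j$ and $\norm{v-o}^2=1-\rho_j^2=\rho_i^2$. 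Since also $v\in o+U_i$, this puts $v$ on $C_i$, or on $\Sigma_1$ when $i=1$. Setting $S_i:=T_i\setminus S_0$ gives the desired partition. It remains to choose $\epsi_1$ small enough in terms of $\epsi$ and $d$ that $\card{S_0}=O(\epsi_1 n)$ is below $\epsi n$ and $\card{S_i}=\card{V_i}-O(\epsi_1 n)$ lies in $(n/p-\epsi n,\;n/p+\epsi n)$, and to take $N$ to be the maximum of $N_1$ and the finitely many other thresholds used above.

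The main difficulty is the rigidity step: establishing the orthogonal-spheres lemma with the correct conclusion about dimensions, and verifying that the anchors $P_i$ can be taken non-degenerate --- affinely spanning subspaces whose dimensions sum to $d$, each containing the common centre $o$ --- which is precisely where ``$T_i$ is far from every line'' is needed. Everything else is routine, provided the parameters are introduced in the order $\epsi$, then $\epsi_1$, then $\delta$, then $N$, so that the errors from the stability step, the two cleaning steps, and the one-dimensional enlargement all stay within the allowance $\epsi n$. The even-dimensional case (Theorem~\ref{evenstable}) is entirely analogous and slightly simpler, with $p$ mutually orthogonal circles in place of $\Sigma_1,C_2,\dots,C_p$.
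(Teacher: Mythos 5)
Your overall two-step scheme (Erd\H{o}s--Simonovits stability followed by geometric rigidity via the orthogonal-spheres lemma) is the same as the paper's, and the even-dimensional half is sound: there $\sum_i\dim U_i\le 2p=d$ forces every $U_i$ to be $2$-dimensional, and a point adjacent to all of $P_j$ for every $j\ne i$ automatically lies in $o+(\bigoplus_{j\ne i}U_j)^{\perp}=o+U_i$, so the spanning property of the anchors is never really needed. In odd dimensions, however, there is a genuine gap at the anchor-selection step, and it is exactly where weak Lenz configurations enter. You require the $P_i$ simultaneously to affinely span $\mathrm{aff}(T_i)$ and to have all cross pairs at unit distance, justified by randomness plus the fact that no $T_i$ is close to a line. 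But ``far from every line'' only gives $\dim\mathrm{aff}(T_i)\ge 2$; it does not distinguish planar from genuinely $3$-dimensional classes, and $3$-dimensionality can be witnessed by a handful of points that a random choice will miss. Worse, the two requirements can be outright incompatible: take concentric $2$-spheres $\Sigma_1,\dots,\Sigma_p$ of radius $1/\sqrt{2}$ sharing a common polar axis $V_0$, with mutually orthogonal equators $C_1,\dots,C_p$ (the weak Lenz configuration of Section~\ref{odd7}), and put $\Theta(\sqrt{\delta}\,n)$ points of $T_1$ on $\Sigma_1\setminus C_1$ and of $T_2$ on $\Sigma_2\setminus C_2$. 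This set still has $(\frac{p-1}{2p}-\delta)n^2$ unit distances and no point with $\epsi_1 n$ defects, yet no point of $\Sigma_1\setminus C_1$ is at unit distance from any point of $\Sigma_2\setminus C_2$, so spanning anchors in classes $1$ and $2$ cannot all be cross-adjacent. If instead your anchors land on the equators (as a random choice would), then every $\dim U_i=2$, the space $(\bigoplus_{j\ne i}U_j)^{\perp}$ is $3$-dimensional, and your pull-back step only places $v\in T_i$ on the $2$-sphere in $o+(U_i\oplus V_0)$, not on $C_i$: you prove only that $S$ is close to a weak Lenz configuration, which is weaker than the theorem.

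The missing ingredient is an argument that at most one class can be non-planar, with the few non-planar points of the remaining classes absorbed into $S_0$. The paper supplies this by fixing $4$ noncoplanar witness points $A_1$ in one non-concyclic class, moving into $S_0$ the $O(\epsi n)$ points not adjacent to all of $A_1$ (which automatically sweeps out the off-equator points of every other class, since these are non-adjacent to any off-equator point of $A_1$), and then ruling out a second non-concyclic class by the dimension count $3+3+2(p-2)=d+1>d$ via Lemma~\ref{geometry}. You need to add this step, or an equivalent one, to close the gap.
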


\begin{corollary}\label{stablecor} Let $d\geq 4$.
If a set $S$ of $n$ points in $\R^d$ has at least $(\frac{p-1}{2p}-o(1))n^2$ unit distance pairs, then $S$ is a Lenz configuration except for $o(n)$ points.
\end{corollary}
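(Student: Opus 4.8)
The plan is to read the corollary off the two stability theorems, the only real work being a short computation with the circle radii and the routine passage from ``$\epsi$--$\delta$'' to ``$o(\cdot)$''. Write the hypothesis, for a sequence of sets $S=S^{(n)}$, as $u(S)\geq(\tfrac{p-1}{2p}-\epsi(n))n^2$ with $\epsi(n)\to0$. I would fix an arbitrary $\epsi>0$ with $\epsi p^2<\tfrac12$ and apply Theorem~\ref{evenstable} for even $d$, or Theorem~\ref{oddstable} for odd $d$, to get $\delta>0$ and $N$; then for every $n\geq N$ with $\epsi(n)\leq\delta$ — i.e.\ for all large $n$ — there is a partition $S=S_0\cup S_1\cup\dots\cup S_p$ with $\card{S_0}<\epsi n$, each $\card{S_i}=(\tfrac1p+o(1))n$, and $S_1,\dots,S_p$ lying on concentric, mutually orthogonal circles $C_1,\dots,C_p$ (respectively, for odd $d$, $S_1$ on a concentric $2$-sphere $\Sigma_1$ orthogonal to circles $C_2,\dots,C_p$). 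Since $\epsi$ was an arbitrary small positive number, once we also know that $S\setminus S_0$ is genuinely a Lenz configuration for $n$ large, the conclusion follows with the $o(n)$ exceptional set $S_0$.

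The one point with content is therefore upgrading ``$S\setminus S_0$ lies on $p$ concentric mutually orthogonal circles (and a sphere)'' to ``$S\setminus S_0$ is a Lenz configuration'', i.e.\ showing the radii $r_1,\dots,r_p$ obey $r_i^2+r_j^2=1$ for all distinct $i,j$; translating the common centre to the origin then matches the definition verbatim. Here the key remark is that when $C_i\perp C_j$ (or $\Sigma_1\perp C_j$) are concentric, $\norm{x-y}^2=r_i^2+r_j^2$ for \emph{every} $x\in S_i$ and $y\in S_j$, so there is no unit distance at all between $S_i$ and $S_j$ unless $r_i^2+r_j^2=1$. Letting $E=\{\{i,j\}:r_i^2+r_j^2=1\}$ and using $u(S_i)\leq\card{S_i}$ for points on a circle and $u(S_1)=O(\card{S_1}^{3/2})$ for points on a $2$-sphere (the unit-distance graph there is $K_{2,3}$-free; see also \cite{Clarksonetal}), both $o(n^2)$, I get
\[
u(S)\ \leq\ \card{S_0}\,n+\sum_{i=1}^{p}u(S_i)+\sum_{\{i,j\}\in E}\card{S_i}\card{S_j}\ \leq\ \epsi n^2+o(n^2)+\Bigl(\tfrac{\card E}{p^2}+o(1)\Bigr)n^2 .
\]
Comparing with $u(S)\geq(\tfrac{p-1}{2p}-o(1))n^2$ yields $\card E\geq\binom p2-\epsi p^2-o(1)>\binom p2-1$ for $n$ large; since $\card E$ is an integer and $\card E\leq\binom p2$, this forces $\card E=\binom p2$, so every pair satisfies $r_i^2+r_j^2=1$. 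For $p=2$ (that is, $d=4$ or $d=5$) this is exactly the single relation in the definition; for $p\geq3$ the system of relations forces every radius to be $1/\sqrt2$, again as in the definition. Hence a translate of $S\setminus S_0$ is a finite subset of $\bigcup_iC_i$ (respectively $\Sigma_1\cup\bigcup_{i\geq2}C_i$) with the prescribed radii — a Lenz configuration.

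I do not expect a genuine obstacle: the weight is carried entirely by Theorems~\ref{evenstable} and \ref{oddstable}, and what remains is the elementary observation that two orthogonal concentric circles place all their points at one fixed mutual distance (so ``wrong-radius'' pairs contribute \emph{zero}, not merely few, unit distances), together with the bookkeeping above. The only thing to watch is to choose $\epsi$ small relative to $p=\lfloor d/2\rfloor$ — which is legitimate since $d$, hence $p$, is fixed — so that the count really pins $\card E=\binom p2$ and not just ``most'' of the pairs.
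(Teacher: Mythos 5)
Your proposal is correct and matches the paper's treatment, which simply declares the corollary immediate from Theorems~\ref{evenstable} and \ref{oddstable}; your extra counting step pinning down $r_i^2+r_j^2=1$ for every pair is a legitimate way to supply the radius condition that the theorem statements omit (in the paper it is implicit, coming from Lemma~\ref{geometry} inside their proofs). One small inaccuracy: the unit-distance graph on a $2$-sphere of radius $1/\sqrt{2}$ is \emph{not} $K_{2,3}$-free (an antipodal pair shares its entire unit-distance circle), but $K_{3,3}$-freeness (via Lemma~\ref{geometry}) or Lemma~\ref{lowdim}\eqref{new} still gives the $o(n^2)$ bound you need, so the argument stands.
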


\section{Overview of the paper}
In Section~\ref{lemmas} we consider results from geometry necessary for the proofs.

In Section~\ref{optimised} we determine the maximum number of unit distances and diameters in even-dimensional Lenz configurations, introduce the notions of weak and strong Lenz configuration in odd dimensions, show that the weak Lenz configurations with the largest number of unit distances or diameters are strong Lenz configurations, and determine the maximum number of diameters in strong Lenz configurations.
Corollaries~\ref{cor2} and \ref{cor3} then follow, given that extremal sets are (weak) Lenz configurations.

In Section~\ref{stability} we use the Erd\H{o}s-Simonovits stability
theorem from extremal graph theory to prove Theorems~\ref{evenstable}
and \ref{oddstable}, from which Corollary~\ref{stablecor} is
immediate.

Finally, in Section~\ref{extremal} we use the stability theorems to show that sets of points that are extremal with respect to unit distances or diameters are (weak) Lenz configurations, thereby finishing the proof of Theorem~\ref{maintheorem}.

\section{Geometric preliminaries}\label{lemmas}
We denote the distance between points $p$ and $q$ in $\R^d$ by $\length{pq}$.
The \define{unit distance graph} of a set $S$ of $n$ points in $\R^d$ is defined by joining any two points at distance $1$.
Let $u(S)$ denote the number of (unordered) unit distance pairs in $S$.
Two points in $S$ at distance $1$ are \define{neighbours}.
For any point $x$ and finite set $S$, let $u(x,S)$ denote the number of points in $A$ that are at distance $1$ to $x$.
Similarly, for any finite sets $A$ and $B$, let $u(A,B)$ denote the number of (ordered) unit distance pairs $(a,b)$ with $a\in A$ and $b\in B$.

Whenever we work with diameters, we assume that the diameter of $S$ is $1$, and then we use the notation $u(S)$, $u(x,S)$ and $u(A,B)$ as before.
In this case we call the unit distance graph of $S$ the \define{diameter graph} of $S$.

We continually use the following two basic lemmas in the sequel.
The first deals with unit distances and diameters on circles and $2$-spheres, and the second with unit distances in dimensions higher than $3$.

\begin{lemma}\label{lowdim}
Let $S$ be a set of $n$ points in $\R^3$.
\begin{enumerate}
\renewcommand{\theenumi}{\alph{enumi}} 
\item\label{a} If $S$ lies on a circle of radius $1/\sqrt{2}$, then
\[ u(S) \leq \begin{cases}
n & \text{if $n$ is divisible by $4$,}\\
n-1 & \text{otherwise.}
\end{cases} \]
Equality is possible for all $n$, by letting $S$ be the union of the vertices of $\lfloor n/4\rfloor$ inscribed squares and $n-4\lfloor n/4\rfloor$ vertices  of an additional square.
\item\label{b} If $S$ has diameter $1$ and lies on a circle, then
\[ u(S) \leq \begin{cases}
n & \text{if $n$ is odd,}\\
n-1 & \text{if $n$ is even.}
\end{cases} \]
Equality is possible for all $n\geq 2$, for a circle of suitable radius depending on $n$.
\item\label{c} If $S$ has diameter $1$ and lies on a circle of radius $>1/\sqrt{3}$, then $u(S)=1$.
\item\label{new} If $S$ lies on a $2$-sphere, then $u(S)=O(n^{4/3})$.
There exist sets $S$ with $u(S)=\Omega(n^{4/3})$.
\item\label{d} If $S$ has diameter $1$ and lies on a $2$-sphere, then $u(S)\leq 2n-2$.
Equality is possible for each $n\geq 4$, $n\neq 5$, for a $2$-sphere of suitable radius depending on $n$.
\item\label{e} If $S$ has diameter $1$ and lies on a $2$-sphere of radius $\geq 1/\sqrt{2}$, then $u(S)\leq n$.
Equality is possible for all $n\geq 3$ and all radii $\geq 1/\sqrt{2}$.
\end{enumerate}
\end{lemma}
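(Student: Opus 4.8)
The plan is to prove the six parts in turn, reducing the two statements about $2$‑spheres to the three about circles by means of the circle $C_p:=\{x\in\Sigma:\length{xp}=1\}$ that a point $p$ cuts on $\Sigma$. For \eqref{a} and \eqref{b} the key observation is that on a circle of radius $\rho$ a chord of length $1$ subtends a central angle $\theta$ with $\sin(\theta/2)=1/(2\rho)$; in particular for $\rho=1/\sqrt2$ this is $\theta=\pi/2$. Hence in the unit‑distance graph (in \eqref{b}, the diameter graph) of a point set on a circle every vertex has degree at most $2$, so the graph is a disjoint union of paths and cycles and $u(S)\le n$, with equality only when every component is a cycle. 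For \eqref{a} a cycle corresponds to a self‑avoiding closed $\pm1$ walk in $\Z/4$, hence is a $4$‑cycle on an inscribed square, so $u(S)=n$ iff $4\mid n$, and the configuration in the statement realises the bound for every $n$. For \eqref{b} I would show that (i) every cycle in a diameter graph on a circle is odd — its vertices lie on a regular $b$‑gon inscribed in the circle and the cycle is the ``star polygon'' on it, and an even $b$ would force the longest chord of that $b$‑gon to exceed the diameter — and (ii) the diameter graph has at most one cycle, since two cycles would lie on two distinct inscribed regular $b$‑gons with the same $b$, producing a pair of points at distance greater than the diameter. Then $u(S)=n$ forces the graph to be a single odd $n$‑cycle, so $u(S)\le n$ in general and $u(S)\le n-1$ for even $n$; a regular odd $n$‑gon, and for even $n$ a configuration with points at angles $0,\beta,2\beta,\dots,(n-1)\beta$ on a suitable circle with $\beta$ close to $\pi$ (so that the only unit distances join consecutive points, giving the path $P_n$), realise these bounds.

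For \eqref{c}, suppose $\rho>1/\sqrt3$ and that the diameter graph has at least two edges. If a vertex $p$ has two unit‑neighbours $q_1,q_2$ then $\length{q_1q_2}=2\rho\sin\theta$, and $\length{q_1q_2}\le1=2\rho\sin(\theta/2)$ gives $\cos(\theta/2)\le\frac12$, hence $\rho=1/(2\sin(\theta/2))\le1/\sqrt3$, a contradiction; so the diameter graph is a matching. If it contains two edges $p_1q_1,p_2q_2$, these segments — being two diameters of a planar set — cross at an interior point $x$, and by the power of $x$ the two chords are cut by $x$ in the same ratio, so after relabelling $\length{xp_1}=\length{xp_2}=u$ and $\length{xq_1}=\length{xq_2}=1-u$ with $\tfrac12\le u<1$. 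Writing $\omega\in(0,\pi)$ for the angle between the chords, a short computation with the circumradius gives $\rho^2=\tfrac14\bigl(1+(2u-1)^2\tan^2(\omega/2)\bigr)$, while $\length{p_1p_2}=2u\sin(\omega/2)\le1$ forces $\tan^2(\omega/2)\le(4u^2-1)^{-1}$ and hence $\rho^2\le\frac{u}{2u+1}<\tfrac13$, again contradicting $\rho>1/\sqrt3$. Thus the diameter graph has exactly one edge, i.e.\ $u(S)=1$.

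Part \eqref{new} is quoted: the $O(n^{4/3})$ bound for $n$ points on a $2$‑sphere is in \cite{Clarksonetal}, and the matching lower bound is the construction of Erd\H{o}s, Hickerson and Pach \cite{EHP}. In \eqref{d} the inequality $u(S)\le 2n-2$ is the theorem $M_3(n)=2n-2$ of Gr\"unbaum \cite{Grunbaum}, Heppes \cite{Heppes} and Straszewicz \cite{Str}; for equality one exhibits a cospherical configuration — the regular tetrahedron for $n=4$, and for $n\ge6$ a family of $n$ cospherical points whose diameter graph has $2n-2$ edges (producing this family is one of the new ingredients of the paper). For the lower bound in \eqref{e}: fixing $a_1\in\Sigma$, the circle $C_{a_1}$ has radius $\frac1{2R}\sqrt{4R^2-1}=\sqrt{1-\frac1{4R^2}}\ge1/\sqrt2>1/\sqrt3$, so it carries points $a_2,a_3$ with $\length{a_2a_3}=1$ together with a short sub‑arc just inside $a_2$, every point of which is at distance $<1$ from $a_2$, from $a_3$, and from every other point of the sub‑arc; placing $n-3$ points on that sub‑arc and adjoining $a_1,a_2,a_3$ yields $n$ points of $\Sigma$ of diameter $1$ whose diameter graph is a triangle with $n-3$ pendant edges, so $u(S)=n$.

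The upper bound in \eqref{e} is the crux. My plan is: (i) show every cycle of the diameter graph $G$ is odd — the spherical analogue of the planar fact, using that $S$ has spherical diameter at most a quarter of a great circle on $\Sigma$, so that any two disjoint ``spherical diameters'' cross and a cycle with geodesic edges is an odd spherical star polygon — whence $G$ contains no theta‑subgraph, i.e.\ every block of $G$ is an edge or an odd cycle; then (ii) show that each connected component of $G$ contains at most one cyclic block, so the cycle rank of $G$ is at most its number of components and $u(S)=\card{E(G)}\le n$. The local tools for (ii) are part \eqref{c} applied on the circles $C_p$ (the unit‑neighbours of $p$ lie on $C_p$, a circle of radius $>1/\sqrt3$, so at most one edge of $G$ joins two of them) and a ``great circle lemma'': if $\length{pq}=1$ then the perpendicular bisector plane of $pq$ passes through the centre of $\Sigma$, so the points of $\Sigma$ at distance $1$ from both $p$ and $q$ lie on a great circle, there are at most two of them, and a direct computation (which uses $R^2>\tfrac38$) shows any two of them are at distance $>1$ — so no two \emph{adjacent} vertices of $G$ have two common neighbours. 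I expect the hard part to be assembling these local constraints into (ii): ruling out two vertex‑disjoint odd cycles joined by a path, and two odd cycles meeting in a cut vertex, should need a careful case analysis on the circles $C_p$ and on the caps $\{x\in\Sigma:\length{xp}\le1\}$.
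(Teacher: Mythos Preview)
Your treatment of \eqref{a}--\eqref{c} and \eqref{new} is fine and matches the paper, which calls the first three straightforward and cites \cite{Clarksonetal} and \cite{EHP} for the fourth. The real differences are in \eqref{e} and \eqref{d}.

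For \eqref{e} you are working much harder than needed, and your plan has a genuine gap at step~(ii), which you yourself flag as unfinished. The point is that the crossing observation you already make --- when $R\geq 1/\sqrt{2}$ a unit chord subtends at most a quarter of a great circle, so any two diameters drawn as short geodesic arcs on $\Sigma$ must intersect --- is \emph{all} that the classical planar proof of $M_2(n)=n$ uses, and that argument transfers to the sphere verbatim. The paper (following \cite{KMW}) does exactly this in one sentence. Your block decomposition, the great-circle lemma, and the case analysis on pairs of odd cycles are unnecessary detours around a one-line finish.

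For \eqref{d}, beyond citing Gr\"unbaum--Heppes--Straszewicz the paper also gives a short self-contained spherical proof of the upper bound: reflect each point through the centre of $\Sigma$, and for every diameter $xy$ draw the short arcs $xy'$ and $x'y$; the resulting bipartite geometric graph on $2n$ vertices is planar (a crossing would force a distance exceeding the diameter), so Euler's formula bounds its edges by $4n-4$, which is twice the number of diameters. You are right that the equality construction is the genuinely new content. For even $n$ it is simply the vertex set of a regular $(n-1)$-gon together with the point on its axis at distance $1$ from the vertices. For odd $n\geq 7$ the paper places $n-3$ points in a diameter-chain on a circle of variable radius $r$, adds the apex on the axis at distance $1$ to fix a sphere through everything, and then runs an intermediate-value argument in $r$ to locate two further points on that sphere so that exactly five additional diameters appear.
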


\begin{proof}
Statements \eqref{a}, \eqref{b}, \eqref{c} are straightforward, except perhaps $u(S)\leq n-1$ for an even number of concyclic points of diameter $1$.
This follows essentially from the easily seen observation that if the diameter graph of points of some concyclic points contains a cycle, then it consists only of this cycle, together with the well known fact that all cycles in diameter graphs in the plane are odd \cite{HP, Sutherland}.

The upper bound in \eqref{new} is due to Clarkson et al.\ \cite{Clarksonetal}.
The simplest known proof of it is by adapting Sz\'ekely's proof \cite{Szekely} for the planar case.
The lower bound in \eqref{new} is due to Erd\H{o}s, Hickerson and Pach \cite{EHP}.

Statement \eqref{e} can be found in Kupitz, Martini and Wegner \cite{KMW}.
It follows as in the planar case \cite[Theorem 13.13]{PachAgarwal} from the observation that any two diameters, when drawn as short great circular arcs on the $2$-sphere, must intersect.
Examples of $n$ points with $n$ diameters are easily found for all radii larger than $1/\sqrt{2}$; they have essentially the same structure as in the plane; see \cite{KMW} for details.

The upper bound of $2n-2$ in \eqref{d} is the Gr\"unbaum-Heppes-Straczewicz upper bound for diameters in $\R^3$ \cite[Theorem 13.14]{PachAgarwal}.
(For a new proof see \cite{Sw-Vazsonyi}.)
The following is a short proof for points on a $2$-sphere.
For a point $x$ on the sphere, denote its opposite point by $x'$.
Colour the $n$ given points blue and their opposite points red.
For any diameter $xy$, join the blue point $x$ and the red point $y'$ by a short arc of the great circle passing through them, and do the same with $x'$ and $y$.
This defines a bipartite geometric graph on the sphere, with all the arcs of the same length $r$, say.
It is easily seen that this graph is planar: if the arcs $ab'$ and $cd'$ intersect, then by the triangle inequality, the arc $ad'$ or the arc $b'c$ will be shorter than $r$.
Then either $\length{ad}$ or $\length{bc}$ will be larger than the diameter, a contradiction.
This graph has $2n$ vertices.
By Euler's formula, a bipartite planar graph on $2n$ vertices has at most $4n-4$ edges.
Since this is twice the number of diameters, the upper bound follows.

The only statement that remains to be proved, is that $2n-2$ diameters can be attained on a $2$-sphere for each $n\geq 4$, $n\neq 5$.
For even $n\geq 4$ the construction is easy.
Consider the vertex set of a regular $(n-1)$-gon of diameter $1$, and choose another point on the axis of symmetry of the polygon at distance $1$ to the $n-1$ vertices.
This clearly gives $n$ points with $2n-2$ diameters.

For odd $n\geq 7$ the construction is more involved.
Place $n-3$ points $x_1,\dots,x_{n-3}$ on the circle $C$ of radius $r$ and centre $o$ in the $xy$-plane such that the diameter $1$ occurs between consecutive $x_i$'s (Figure~\ref{fig1}).
\begin{figure}
\begin{center}
\begin{overpic}[scale=0.7]{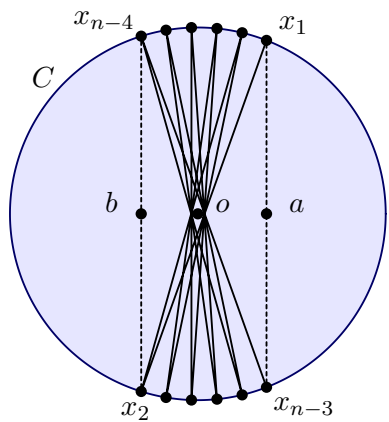}
\put(51,48){$o$}
\put(65,48){$a$}
\put(30,48){$b$}
\put(63,83){$x_1$}
\put(33,9){$x_2$}
\put(62,10){$x_{n-3}$}
\put(24,84){$x_{n-4}$}
\put(16,72){$C$}
\end{overpic}
\end{center}
\caption{Circle $C$ with points $x_1$ to $x_{n-3}$}\label{fig1}
\end{figure}
Note that $r$ and $n$ determine everything up to isometry.
We fix $r$ later in the proof.
Let $x_{n-2}$ be the point on the positive $z$-axis at distance $1$ to $C$.
Then $x_{n-2}$ and $C$ are on a unique sphere $\Sigma$ with centre $o'$ and radius $s$, say.
Note that $o'$ is on the positive $z$-axis.

We now want to find points $x_{n-1}$ and $x_n$ on $\Sigma$ 
such that
\[ \length{x_1x_{n-1}}=\length{x_{n-3}x_{n-1}}=\length{x_2x_n}=\length{x_{n-4}x_n}=\length{x_{n-1}x_n}=1\]
and \[\length{x_{n-2}x_{n-1}}\leq 1, \quad\length{x_{n-2}x_n}\leq 1.\]See Figure~\ref{fig2}.
\begin{figure}
\begin{center}
\begin{overpic}[scale=0.7]{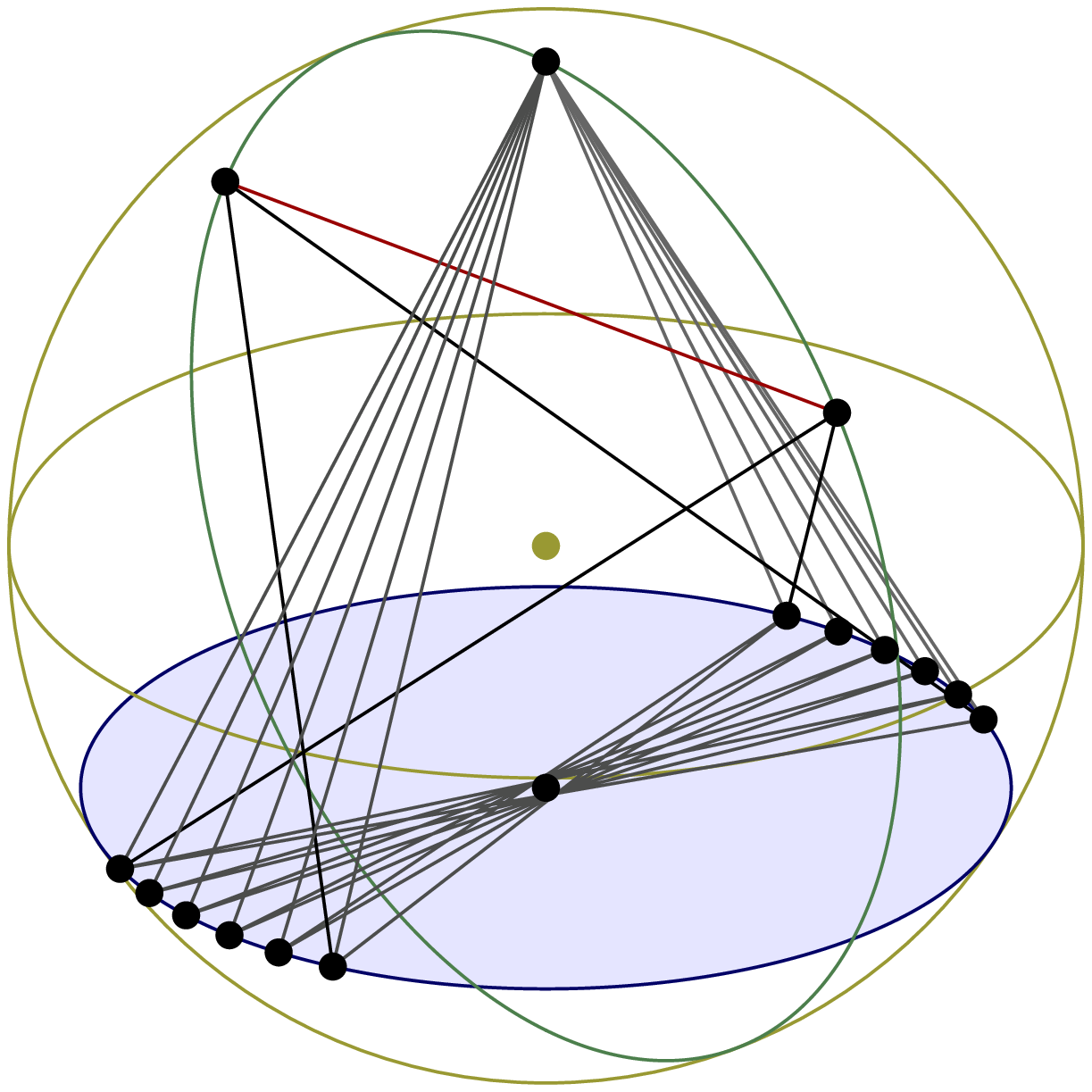}
\put(51,27){$o$}
\put(51,51){$o'$}
\put(30,12){$x_{n-3}$}
\put(82,32){$x_1$}
\put(9,23){$x_2$}
\put(57,43){$x_{n-4}$}
\put(27,80){$x_{n-1}$}
\put(75,60){$x_n$}
\put(52,88){$x_{n-2}$}
\put(60,16){$C$}
\put(18,49){$C'$}
\put(82,80){$\Sigma$}
\end{overpic}
\end{center}
\caption{$15$ points on a sphere with $28$ diameters}\label{fig2}
\end{figure}
This will give the required number of diameters in the set $S:=\{x_1,\dots,x_n\}$.
For any value of $r$ there will clearly be unique points $x_{n-1},x_n\in\Sigma\setminus\{x_{n-2}\}$ that satisfy
\[\length{x_{n-3}x_{n-1}}=\length{x_1x_{n-1}}=\length{x_2x_n}=\length{x_{n-4}x_n}=1.\] 
It remains to find an appropriate value of $r$ so that
\[\length{x_{n-1}x_n}=1, \quad\length{x_{n-2}x_{n-1}}\leq 1, \quad\length{x_{n-2}x_n}\leq 1.\]

We reduce this to a two-dimensional problem.
Let $a$ and $b$ be the midpoints of $x_1x_{n-3}$ and $x_2x_{n-4}$, respectively.
Consider the intersection of $\Sigma$ with the plane $oabx_{n-2}$.
This is a circle $C'$ with centre $o'$ and radius $s$.
By symmetry, $x_{n-1}$ and $x_n$ lie on $C'$, and $\length{ax_{n-2}}=\length{ax_{n-1}}$ and $\length{bx_{n-2}}=\length{bx_n}$ (Figure~\ref{fig3}).
\begin{figure}
\begin{center}
\begin{overpic}[scale=0.7]{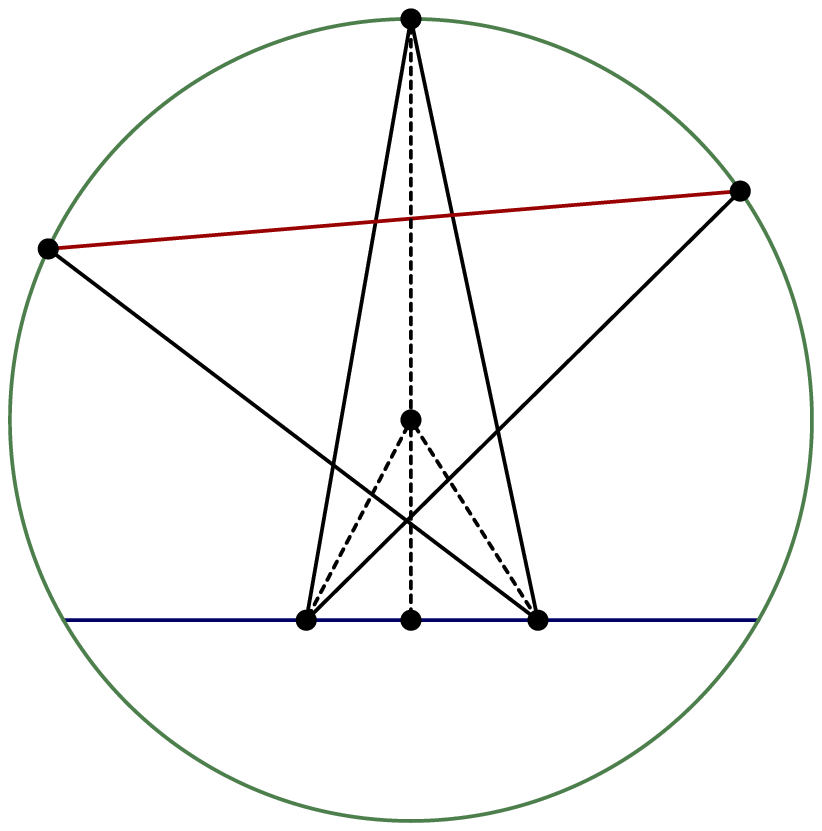}
\put(48,23){$o$}
\put(50,49){$o'$}
\put(60,23){$a$}
\put(36,23){$b$}
\put(0,71){$x_{n-1}$}
\put(84,71){$x_n$}
\put(47,93){$x_{n-2}$}
\put(73,30){$\ell$}
\put(14,84){$C'$}
\end{overpic}
\end{center}
\caption{Circle $C'$}\label{fig3}
\end{figure}
Therefore, $ao'$ bisects $\myangle x_{n-2}a x_{n-1}$, and $bo'$ bisects $\myangle x_{n-2} b x_{n-1}$.
Clearly, $\length{oa}>\length{ob}$, and both $\length{oa}$ and $\length{ob}$ are strictly monotone functions of $r$.

We now consider $r$ to be a variable ranging in the interval $(1/2,r_0)$, where
\[ r_0:=\left(2\cos\frac{\pi}{2(n-4)}\right)^{-1}.\]
On the one hand $r>\frac{1}{2}$, and in the limit as $r\to\frac{1}{2}$, the diameters $x_ix_{i+1}$ all coincide, and $\lim_{r\to1/2}\length{oa}=\lim_{r\to1/2}\length{ob}=0$.
It follows that
\[\lim_{r\to1/2}\length{x_{n-2}x_{n-1}}=\lim_{r\to1/2}\length{x_{n-2}x_n}=0,\]
hence $\lim_{r\to1/2}\length{x_{n-1}x_n}=0$.

On the other hand, $r< r_0$,
where in the limit as $r\to r_0$, $x_1$ and $x_{n-3}$ coincide, and the points form the vertex set of a regular $(n-4)$-gon.
Thus \[\lim_{r\to r_0}\length{oa}= r_0,\] \[\lim_{r\to r_0}\length{ob}\to 2r_0\sin\frac{\pi}{n-4},\] and \[\lim_{r\to r_0}\length{x_{n-2}a}=1.\]
Since $2r_0>1$, $\lim_{r\to r_0}x_{n-1}$ is a point below the chord $\ell$ of $C'$ through $a$ and $b$.
(Note that $\ell$ is a diameter of $C$).
Also, \[\lim_{r\to r_0}\length{x_2 a}= \lim_{r\to r_0}\length{x_{n-4} a}=1,\] hence $\lim_{r\to r_0}x_n=a$.
Since $x_{n-1}$ is lower than $x_n$ (because $\length{oa}>\length{ob}$), when $x_{n-1}$ reaches $\ell$, $x_n$ has not reached $\ell$ yet.
Since $\length{x_n b}=\length{x_{n-2}b}$, it follows that the chord $x_n b$ is below $o'$.
Since at this stage (with $x_{n-1}\in\ell$) the chord $bx_{n-1}$ is below $o'$, it follows that the chord $x_{n-1}x_n$ is below $o'$.
Thus before $x_{n-1}$ reaches $\ell$, there is a stage where $x_{n-1}x_n$ passes through $o'$ with both $x_{n-1}$ and $x_n$ still above $\ell$, and therefore at distance at most $1$ to $x_{n-2}$.
From $s>r>1$ it follows that $\length{x_{n-1}x_n}>1$.
Since $\lim_{r\to1/2}\length{x_{n-1}x_n}= 0$, at some stage $\length{x_{n-1}x_n}<1$.
Therefore, at some inbetween stage, $\length{x_{n-1}x_n}=1$.
This finishes the construction for odd $n\geq 7$.
\end{proof}

We remark that the exception $n\neq 5$ in Lemma~\ref{lowdim}\eqref{e} is necessary.
Suppose there exist $5$ points on a $2$-sphere with $8$ diameters.
Then one of the points must be incident to $4$ diameters.
The other $4$ points are then concyclic, and among them there can be at most $3$ diameters (Lemma~\ref{lowdim}\eqref{b}), a contradiction.
On the other hand, it is easy to find $5$ points on a sphere with $7$ diameters.

The next lemma is well known.
We omit the easy proof.

\begin{lemma}\label{geometry}
Let $A$ and $B$ be finite subsets of $\R^d$, each of size at least $3$.
If $\length{ab}=1$ for all $a\in A$, $b\in B$, then the affine subspaces spanned by $A$ and $B$ are orthogonal, $A$ and $B$ lie on spheres of radii $r_a$ and $r_b$, say, such that $r_a^2+r_b^2=1$, and with common centre the point of intersection of the two subspaces.
\end{lemma}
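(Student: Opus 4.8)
The plan is to reduce everything to the elementary identity $\length{xy}^2=\length{x}^2-2\ipr{x}{y}+\length{y}^2$ together with a single application of the theorem of Pythagoras. First I would prove that the affine hulls $\operatorname{aff}(A)$ and $\operatorname{aff}(B)$ are orthogonal. Fix $b_0\in B$; for arbitrary $a,a'\in A$ and $b\in B$, subtracting $1=\length{a'b}^2$ from $1=\length{ab}^2$ and using the identity gives $\ipr{a-a'}{b}=\tfrac12\bigl(\length{a}^2-\length{a'}^2\bigr)$, a quantity that does not depend on $b$. Hence $\ipr{a-a'}{b-b'}=0$ for all $a,a'\in A$ and $b,b'\in B$, so the direction space $U_A$ of $\operatorname{aff}(A)$ (the linear span of the differences $a-a'$) is orthogonal to the direction space $U_B$ of $\operatorname{aff}(B)$; this is precisely the asserted orthogonality, and it forces $U_A\cap U_B=\{0\}$, so $\operatorname{aff}(A)$ and $\operatorname{aff}(B)$ meet in at most one point.

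Next I would argue that the two hulls meet in exactly one point $o$. This is the place where the hypothesis $\card{A},\card{B}\geq 3$ is genuinely needed: since each point of $B$ lies on the unit sphere centred at every point of $A$, and a line meets a sphere in at most two points, the $\geq 3$ points of $A$ cannot be collinear, so $\dim U_A\geq 2$; symmetrically $\dim U_B\geq 2$. Exploiting these dimension bounds in conjunction with $U_A\perp U_B$ should rule out $\operatorname{aff}(A)$ and $\operatorname{aff}(B)$ being skew, leaving them to intersect in a unique point $o\in\operatorname{aff}(A)\cap\operatorname{aff}(B)$; I expect this intersection statement to be the one step of the proof that is not a pure formality, precisely because orthogonal affine subspaces need not meet in general.

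Granting $o$, the conclusion is immediate. For $a\in A$ and $b\in B$ we have $a-o\in U_A$ and $b-o\in U_B$, so by the orthogonality of $U_A$ and $U_B$, $1=\length{ab}^2=\length{a-o}^2+\length{b-o}^2$. Holding $a$ fixed and letting $b$ vary over $B$ shows that $\length{b-o}$ is the same for all $b\in B$, call it $r_b$; holding $b$ fixed and letting $a$ vary over $A$ shows $\length{a-o}$ is constant on $A$, call it $r_a$; and then $r_a^2+r_b^2=1$. Thus $A$ lies on the sphere with centre $o$ and radius $r_a$, $B$ on the concentric sphere of radius $r_b$, with $r_a^2+r_b^2=1$, as claimed.
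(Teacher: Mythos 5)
The paper offers no proof to compare against---it dismisses the lemma as ``well known''---so your argument has to stand on its own, and it has one genuine hole, exactly at the step you flag as ``not a pure formality'' and then leave as a hope. Orthogonality of the direction spaces together with $\dim U_A,\dim U_B\geq 2$ does \emph{not} rule out $\operatorname{aff}(A)$ and $\operatorname{aff}(B)$ being disjoint, and no linear-algebraic argument can, because the intersection point you want need not exist. Concretely, in $\R^5$ let $A$ be three points on a circle of radius $r_A$ about the origin in the $x_1x_2$-plane, and let $B$ be three points on a circle of radius $r_B$ about $\delta e_5$ in the plane $x_1=x_2=0$, $x_5=\delta$, with $r_A^2+r_B^2+\delta^2=1$ and $\delta>0$. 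Every $\length{ab}=1$, both direction spaces are $2$-dimensional and mutually orthogonal, yet the two affine hulls are disjoint (they lie in different hyperplanes $x_5=\mathrm{const}$). This also shows that the lemma's own phrase ``the point of intersection of the two subspaces'' is too strong in general; what survives, and what the paper actually uses, is the existence of \emph{a} common centre with $r_a^2+r_b^2=1$. (In the paper's applications the direction spaces of all the classes together fill up $\R^d$, and then the hulls do meet.) A minor slip besides: your non-collinearity sentence has $A$ and $B$ swapped---``each point of $B$ lies on the unit sphere centred at every point of $A$'' shows $B$ is not collinear; the symmetric statement handles $A$.

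The repair is to produce the centre metrically instead of as an intersection point. For a fixed $b\in B$ the set $A$ lies on the unit sphere about $b$, hence on a sphere inside $\operatorname{aff}(A)$; since $A$ affinely spans $\operatorname{aff}(A)$, it has a well-defined circumcentre $o\in\operatorname{aff}(A)$ and circumradius $r_a$. Your own polarization identity gives $\ipr{b}{a-a'}=\frac12(\length{a}^2-\length{a'}^2)$ for all $a,a'\in A$, and the relation $\length{a-o}=\length{a'-o}$ gives $\ipr{o}{a-a'}=\frac12(\length{a}^2-\length{a'}^2)$ as well; subtracting, $b-o\perp U_A$ for every $b\in B$. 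Writing $a-b=(a-o)-(b-o)$ with $a-o\in U_A$ and $b-o\in U_A^{\perp}$, Pythagoras yields $1=\length{a-o}^2+\length{o-b}^2=r_a^2+\length{o-b}^2$, so every $b\in B$ lies on the sphere of radius $r_b:=\sqrt{1-r_a^2}$ about $o$. Together with your (correct) orthogonality computation this proves everything the paper needs; your first and last steps are the standard ones, only the middle step must be replaced by this argument.
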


\section{Optimised Lenz configurations}\label{optimised}
\subsection{Even dimensions \boldmath$d\geq 6$\unboldmath}
We have already defined a Lenz configuration in the introduction.
For any Lenz configuration $S$ on $n$ points lying on $p=d/2$ mutually orthogonal circles $C_i$ with centre $o$ and radius $1/\sqrt{2}$, we define $S_i:=S\cap C_i$ and $n_i:=\card{S_i}$.

\subsubsection{Unit distances}
Define
\[ u_d^L(n)=\max\{u(S): \text{$S$ is a Lenz configuration of $n$ points in $\R^d$}\}. \]
We call any Lenz configuration $S$ of $n$ points in $\R^d$ for which $u(S)=u_d^L(n)$ an \define{optimised Lenz configuration} (for unit distances).

\begin{proposition}
Let $d\geq 6$ be even, $n\geq 1$, $p=d/2$ and $n\equiv r\pmod{2d}$, $0\leq r\leq 2d-1$.
Then
\begin{align*}
u_d^L(n) &= \begin{cases}
t_p(n)+n-r & \text{if}\quad 0\leq r\leq p-1,\\
t_p(n)+n-p & \text{if}\quad p\leq r\leq 3p-1,\\
t_p(n)+n-2d+r & \text{if}\quad 3p\leq r\leq 4p-1,
\end{cases}
\end{align*}
\end{proposition}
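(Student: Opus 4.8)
The plan is to reduce the proposition to a one-variable extremal problem about partitioning $n$ into $p$ nonnegative integer parts, and then to settle that problem by a convexity (smoothing) argument. First, after a translation I may assume $S\subseteq C_1\cup\dots\cup C_p$, so $S=S_1\cup\dots\cup S_p$ with $\sum_i n_i=n$. If $i\neq j$, $x\in S_i$ and $y\in S_j$, then $x-o\in V_i$ and $y-o\in V_j$ are orthogonal, so $\length{xy}^2=\length{x-o}^2+\length{y-o}^2=r_i^2+r_j^2=1$; thus every pair of points on different circles is a unit distance. Within $C_i$, Lemma~\ref{lowdim}\eqref{a} gives $u(S_i)\le u_{\max}(n_i)$, where $u_{\max}(m):=m$ if $4\mid m$ and $u_{\max}(m):=m-1$ otherwise (so $u_{\max}(0)=0$); conversely, for any prescribed $n_1,\dots,n_p$ equality is attained on all circles simultaneously by placing on each $C_i$ a configuration from Lemma~\ref{lowdim}\eqref{a}. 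Hence
\[ u_d^L(n)=\max\Bigl\{\sum_{1\le i<j\le p}n_in_j+\sum_{i=1}^p u_{\max}(n_i)\ :\ n_i\in\Z_{\ge 0},\ \sum_{i=1}^p n_i=n\Bigr\}. \]

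Next, using $\sum_{i<j}n_in_j=\half(n^2-\sum_i n_i^2)$ and $\sum_i u_{\max}(n_i)=n-\card{\{i:4\nmid n_i\}}$, the bracketed expression equals $\half n^2+n-\sum_{i=1}^p h(n_i)$, where $h(m):=\half m^2$ if $4\mid m$ and $h(m):=\half m^2+1$ otherwise. The crucial observation is that $h$ is discretely convex: a direct check shows that, for $m\ge 1$, the second difference $h(m+1)-2h(m)+h(m-1)$ equals $3,0,1,0$ according as $m\equiv 0,1,2,3\pmod 4$, all of which are nonnegative. A standard smoothing argument --- whenever two of the parts differ by at least $2$, move a unit from the larger to the smaller; by convexity this does not increase $\sum_i h(n_i)$, and it strictly decreases $\sum_i n_i^2$ --- therefore shows that $\sum_i h(n_i)$ is minimised at the balanced (Tur\'an) partition, namely $s$ parts equal to $q+1$ and $p-s$ parts equal to $q$, where $n=pq+s$ with $0\le s\le p-1$. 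For this partition $\sum_i h(n_i)=\half\sum_i n_i^2+\pi$, where $\pi$ denotes the number of parts that are not divisible by $4$, so
\[ u_d^L(n)=\half\Bigl(n^2-\sum_i n_i^2\Bigr)+n-\pi=t_p(n)+n-\pi. \]

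Finally I would compute $\pi$ in terms of $r$. Writing $n=4p\ell+r$ with $0\le r\le 4p-1=2d-1$, we have $q=\lfloor n/p\rfloor=4\ell+\lfloor r/p\rfloor$ and $s=r-p\lfloor r/p\rfloor$, hence $q\equiv\lfloor r/p\rfloor$ and $q+1\equiv\lfloor r/p\rfloor+1\pmod 4$. Splitting into the four cases $\lfloor r/p\rfloor=0,1,2,3$ --- that is, $0\le r\le p-1$, then $p\le r\le 2p-1$, then $2p\le r\le 3p-1$, then $3p\le r\le 4p-1$ --- and counting in each case how many of the $s$ parts of size $q+1$ and the $p-s$ parts of size $q$ are $\not\equiv 0\pmod 4$, one obtains $\pi=s=r$ in the first case, $\pi=(p-s)+s=p$ in the second and third, and $\pi=p-s=4p-r=2d-r$ in the fourth. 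Substituting into $u_d^L(n)=t_p(n)+n-\pi$ yields exactly the three displayed formulas. The reduction and the smoothing argument are entirely routine; the one real idea is that the divisibility-by-$4$ bonus of Lemma~\ref{lowdim}\eqref{a} fails to move the optimum away from the Tur\'an partition, which is precisely what convexity of $h$ secures, and the only delicate step I anticipate is the residue bookkeeping in this last paragraph, where sign and off-by-one slips are easy to make.
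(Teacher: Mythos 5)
Your reduction to maximising $\sum_{i<j}n_in_j+\sum_{i=1}^p u_{\max}(n_i)$ over nonnegative compositions of $n$ is exactly the paper's proof, which then dismisses the resulting optimisation as an ``easy but tedious exercise''; your discrete-convexity/smoothing argument for $h$ (the second differences $3,0,1,0$ are correct, so the first differences are nondecreasing and the exchange step is valid down to parts of size $0$) together with the residue bookkeeping (which I checked in all four cases, including $\pi=p-s=4p-r$ in the last one) supplies that exercise cleanly. The argument is correct and complete.
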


\begin{proof}
Consider an optimised Lenz configuration $S$ on $p$ pairwise orthogonal circles $C_1,\dots,C_p$.
We may rearrange the points on each circle without changing the number of unit distances between circles.
By Lemma~\ref{lowdim}\eqref{a} and maximality, each $u(S_i)=n_i$ if $n_i\equiv 0\pmod{4}$ and $u(S_i)=n_i-1$ otherwise.
The problem is now that of maximising the function
\[ u(n_1,\dots,n_p):= \sum_{1\leq i<j\leq p}n_in_j + n - p + k(n_1,\dots,n_p),\]
over all nonnegative $n_1,\dots,n_p$ that sum to $n$, where $k(n_1,\dots,n_p)$ equals the number of $n_i$ divisible by $4$.
This easy but tedious exercise finishes the proof.
\end{proof}

\subsubsection{Diameters}
Define
\[
\begin{split}
M_d^L(n)=\max\{u(S): S & \text{ is a diameter $1$ Lenz configuration}\\
&\text{ of $n$ points in $\R^d$}\}.
\end{split}
\]
We call any diameter $1$ Lenz configuration $S$ of $n$ points in $\R^d$ for which $u(S)=M_d^L(n)$ an \define{optimised Lenz configuration} (for diameters).

\begin{proposition}
Let $d\geq 6$ be even, $n\geq d$, and $p=d/2$. Then
\[ M_d^L = t_p(n) + p.\]
\end{proposition}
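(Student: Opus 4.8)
The plan is to reduce the problem, exactly as in the unit-distance proposition above, to a trivial combinatorial optimisation over the sizes $n_i=\card{S_i}$, where $S_i:=S\cap C_i$ and $C_1,\dots,C_p$ are the $p$ mutually orthogonal circles of radius $1/\sqrt2$ and common centre $o$ underlying the diameter-$1$ Lenz configuration $S$. Since $C_i\subset V_i$ and the subspaces $V_i$ meet only in $o\notin C_i$, the sets $S_1,\dots,S_p$ partition $S$, so $n_1+\dots+n_p=n$. Because the $V_i$ are mutually orthogonal and each point of $C_i$ is at distance $1/\sqrt2$ from $o$, any two points lying on different circles are at distance $\sqrt{1/2+1/2}=1$, hence form a diameter (in particular $\operatorname{diam}S=1$ as soon as two of the $S_i$ are non-empty), while every within-circle distance is at most $1$ by the diameter hypothesis. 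Therefore
\[ u(S)=\sum_{1\le i<j\le p}n_in_j+\sum_{i=1}^p u(S_i). \]

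For the within-circle terms, each $S_i$ lies on a circle of radius $1/\sqrt2>1/\sqrt3$ and has diameter at most $1$, so Lemma~\ref{lowdim}\eqref{c} gives $u(S_i)=1$ when $\operatorname{diam}S_i=1$ and $u(S_i)=0$ otherwise; in all cases $u(S_i)\le 1$, with equality precisely when some pair of points of $S_i$ is at angular distance $\pi/2$. Combining this with the standard estimate $\sum_{i<j}n_in_j=\frac12(n^2-\sum_i n_i^2)\le t_p(n)$ (the balanced partition minimises $\sum_i n_i^2$) yields the upper bound $u(S)\le t_p(n)+p$ for every diameter-$1$ Lenz configuration on $n$ points, with no case distinction needed according to how many circles are used or how balanced the $n_i$ are. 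For the matching construction I would take $n_1,\dots,n_p$ to be the balanced partition of $n$ into $p$ parts, so that $\sum_{i<j}n_in_j=t_p(n)$; since $n\ge d=2p$, each part satisfies $n_i\ge\lfloor n/p\rfloor\ge 2$. On each $C_i$ place $n_i$ points on a closed arc of angular width $\pi/2$, including both endpoints of the arc: then those two endpoints are at distance $1$ and every other within-circle distance is strictly smaller, so $u(S_i)=1$ and $\operatorname{diam}S_i=1$. The resulting set is a Lenz configuration of diameter $1$ with $u(S)=t_p(n)+p$, which proves $M_d^L(n)=t_p(n)+p$.

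I do not anticipate a genuine obstacle here; unlike the unit-distance case, where the within-circle contribution $u(S_i)\in\{n_i,n_i-1\}$ interacts with the choice of partition and makes the optimisation mildly delicate, here $u(S_i)\le 1$ trivialises the within-circle contribution. The one place the hypothesis $n\ge d$ is used is to guarantee that the partition forced by maximising the between-circle count — the balanced one — still leaves at least two points on each circle, so that all $p$ within-circle diameters can be realised simultaneously; thus the two optimisation requirements do not conflict, and $n\ge 2p$ is exactly what is needed.
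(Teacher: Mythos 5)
Your proof is correct and follows essentially the same route as the paper's: the upper bound comes from Lemma~\ref{lowdim}\eqref{c} applied to each circle (all of radius $1/\sqrt{2}>1/\sqrt{3}$ since $d\geq 6$) giving $u(S_i)\leq 1$, hence $u(S)\leq t_p(n)+p$, and the construction balances the $n_i$ and realises one diameter per circle, with $n\geq d$ guaranteeing $n_i\geq 2$. You have merely spelled out the details the paper leaves implicit.
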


\begin{proof}
Consider an optimised Lenz configuration $S$ of diameter $1$ on $p$ pairwise orthogonal circles $C_1,\dots,C_p$.
By Lemma~\ref{lowdim}\eqref{c},
each $u(S_i)\leq 1$.
Therefore, $u(S)\leq t_p(n)+p$.
Equality is clearly possible if $n\geq d$, by dividing the $n$ points as equally as possible between the $p$ circles, and ensuring that a diameter occurs within each $S_i$.
\end{proof}

\subsection{The dimension \boldmath$d=4$\unboldmath}
For any Lenz configuration $S$ on $n$ points lying on orthogonal circles $C_1$ and $C_2$ with common centre $o$ and radii $r_1$ and $r_2$ such that $r_1^2+r_2^2=1$, define $S_i:=S\cap C_i$ and $n_i:=\card{S_i}$.

\subsubsection{Unit distances}
This section is included for the sake of completeness.
Define
\[ u_4^L(n)=\max\{u(S): \text{$S$ is a Lenz configuration of $n$ points in $\R^4$}\}. \]
As shown by Brass \cite{Brass} and Van Wamelen \cite{VanWamelen}:

\begin{proposition}
Let $n\geq 5$.
Then
\begin{align*}
u_4^L(n) &= \begin{cases}
t_2(n)+n & \text{if $n$ is divisible by $8$ or $10$,}\\
t_2(n)+n-1 & \text{otherwise.}
\end{cases}
\end{align*}
\end{proposition}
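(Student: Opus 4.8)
The plan follows Brass \cite{Brass}: reduce the problem to a Diophantine statement about rational angles that is then settled by Van Wamelen \cite{VanWamelen}. Write a Lenz configuration as $S=S_1\cup S_2$ with $S_i=S\cap C_i$, $n_i=\card{S_i}$, and radii $r_1,r_2$ satisfying $r_1^2+r_2^2=1$. By Lemma~\ref{geometry} we have $\length{xy}=1$ for all $x\in C_1$, $y\in C_2$, so $u(S)=u(S_1)+u(S_2)+n_1n_2$. Since a point of a circle lies at distance $1$ from at most two points of that circle, the unit-distance graph on $S_i$ has maximum degree at most $2$ and hence is a disjoint union of paths and cycles; thus $u(S_i)\le n_i$ and $u(S)\le n_1n_2+n_1+n_2\le t_2(n)+n$. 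This is the claimed value when $8\mid n$ or $10\mid n$, and it is attained: if $8\mid n$, let $C_1,C_2$ have radius $1/\sqrt2$, split the points equally, and arrange each $S_i$ as $n/8$ unit squares (Lemma~\ref{lowdim}\eqref{a}); if $10\mid n$, let $C_1$ have radius $(2\sin\tfrac\pi5)^{-1}$ and $C_2$ radius $(2\sin\tfrac{2\pi}5)^{-1}$ — one checks that $(2\sin\tfrac\pi5)^{-2}+(2\sin\tfrac{2\pi}5)^{-2}=1$ — split equally, and arrange $S_1$ as $n/10$ unit regular pentagons and $S_2$ as $n/10$ unit regular pentagrams. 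Either way $u(S)=t_2(n)+n$.

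Now suppose $n\ge 5$ with $n\not\equiv 0\pmod 8$ and $n\not\equiv 0\pmod{10}$; we must show $u_4^L(n)=t_2(n)+n-1$. For the upper bound, observe that if the partition is unbalanced, i.e.\ $\{n_1,n_2\}\ne\{\lfloor n/2\rfloor,\lceil n/2\rceil\}$, then $n_1n_2\le t_2(n)-1$, whence $u(S)\le t_2(n)+n-1$. So assume $\{n_1,n_2\}=\{\lfloor n/2\rfloor,\lceil n/2\rceil\}$, hence $n_1n_2=t_2(n)$ and $u(S)=t_2(n)+u(S_1)+u(S_2)$. If $u(S)>t_2(n)+n-1$ then $u(S_1)=n_1$ and $u(S_2)=n_2$; since $\binom{n_i}{2}\ge n_i$ only for $n_i\ge 3$, this forces $n_i\ge 3$ (the cases $n_i\le 2$ being trivial), forces $r_i>\tfrac12$, and forces each $S_i$ to be a disjoint union of unit-length cycles inscribed in $C_i$. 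Such a cycle exists only if the angle $\theta_i=2\arcsin\tfrac1{2r_i}\in(0,\pi)$ subtended by a unit chord is a rational multiple of $2\pi$, say $\theta_i=2\pi a_i/b_i$ in lowest terms; its vertex set is then that of a regular $b_i$-gon inscribed in $C_i$ (traversed as the star polygon $\{b_i/a_i\}$), so that $b_i\mid n_i$.

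The identity $r_i^{-2}=2-2\cos\theta_i$ turns $r_1^2+r_2^2=1$ into $\cos\theta_1+\cos\theta_2=2\cos\theta_1\cos\theta_2$. By Van Wamelen's theorem \cite{VanWamelen}, the only pairs solving this equation in which $\theta_1,\theta_2$ are rational multiples of $2\pi$ lying in $(0,\pi)$ are $\{\theta_1,\theta_2\}=\{\tfrac\pi2,\tfrac\pi2\}$ and $\{\theta_1,\theta_2\}=\{\tfrac{2\pi}5,\tfrac{4\pi}5\}$. In the first case $b_1=b_2=4$; as $n_1$ and $n_2$ differ by at most $1$, $4\mid n_1$ and $4\mid n_2$ force $n_1=n_2=n/2$ with $4\mid n/2$, i.e.\ $8\mid n$, a contradiction. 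In the second case $b_1=b_2=5$, forcing $n_1=n_2=n/2$ with $5\mid n/2$, i.e.\ $10\mid n$, again a contradiction. Hence $u(S)\le t_2(n)+n-1$ for every Lenz configuration on $n$ points, so $u_4^L(n)\le t_2(n)+n-1$.

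For the matching lower bound, take the balanced partition $n_1=\lceil n/2\rceil\ge 3$, $n_2=\lfloor n/2\rfloor\ge 2$, set $a_1=\lfloor n_1/3\rfloor$, and let $r_1=(2\sin\tfrac{\pi a_1}{n_1})^{-1}$; one checks that $a_1/n_1\in[\tfrac15,\tfrac13]\subset(\tfrac1\pi\arcsin\tfrac1{\sqrt3},\tfrac12)$, so that $r_1\in(\tfrac12,\tfrac{\sqrt3}{2})$ and $S_1$ may be taken to be a disjoint union of unit star $n_1$-gons, giving $u(S_1)=n_1$. Put $r_2=\sqrt{1-r_1^2}\in(\tfrac12,\tfrac{\sqrt3}{2})$; whatever the corresponding chord angle $\theta_2$, the $n_2$ points of $S_2$ can be placed so that their unit-distance graph consists of complete unit cycles of $C_2$ together with exactly one path (a single spanning path when $\theta_2/2\pi$ is irrational), so that $u(S_2)=n_2-1$. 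Then $u(S)=n_1n_2+n_1+(n_2-1)=t_2(n)+n-1$, which finishes the proof. The main obstacle lies entirely in the cited Diophantine input: classifying the rational-angle solutions of $\cos\theta_1+\cos\theta_2=2\cos\theta_1\cos\theta_2$ is a genuine number-theoretic problem — it reduces to an analysis of vanishing sums of roots of unity — and we take Van Wamelen's result as a black box; everything on our side is the routine bookkeeping that isolates the balanced, large-radius configuration and the elementary constructions.
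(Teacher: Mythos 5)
Your proposal is correct, and it follows exactly the route the paper intends: the paper gives no proof of this proposition at all (it is ``included for the sake of completeness'' and simply attributed to Brass and Van Wamelen), and your argument is a faithful reconstruction of Brass's reduction --- balanced split, unit-distance graph on a circle is $2$-regular only for unions of star polygons, hence $\theta_i\in 2\pi\Q$, hence the Diophantine equation --- with Van Wamelen's classification of the rational solutions taken as a black box, precisely as the citations indicate. The only spot worth a sentence more of care is the lower-bound claim that $S_2$ can always be arranged as cycles plus one path with $u(S_2)=n_2-1$: this fails when $\theta_2=2\pi a/b$ with $b\mid n_2$, but your own Van Wamelen analysis shows that case occurs only when $8\mid n$ or $10\mid n$, which is excluded there, so the construction is sound.
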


\subsubsection{Diameters}
Define
\[
\begin{split}
 M_4^L(n)=\max\{u(S): S &\text{ is a diameter $1$ Lenz configuration}\\
& \text{ of $n$ points in $\R^4$}\}.
\end{split}
\]
We call any diameter $1$ Lenz configuration $S$ of $n$ points in $\R^4$ for which $u(S)=M_4^L(n)$ an \define{optimised Lenz configuration} (for diameters).

\begin{proposition}
Let $n\geq 6$. Then
\[ M_4^L(n) = \begin{cases}
t_2(n)+\lceil n/2\rceil + 1 & \text{if}\quad n\not\equiv 3\pmod{4},\\
t_2(n)+\lceil n/2\rceil    & \text{if}\quad n\equiv 3\pmod{4}.
\end{cases}\]
\end{proposition}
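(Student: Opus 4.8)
The plan is to reduce the statement, exactly as in the even-dimensional case, to an elementary optimization; the new feature is that in $\R^4$ the radii $r_1,r_2$ need not equal $1/\sqrt2$, and this extra freedom is precisely what produces the linear term $\lceil n/2\rceil$. Write $S_i=S\cap C_i$ and $n_i=\card{S_i}$ (so $n_1+n_2=n$), and assume without loss of generality that $r_1\geq r_2$; then $2r_1^2\geq r_1^2+r_2^2=1$, so $r_1\geq 1/\sqrt2>1/\sqrt3$. For $a\in C_1$ and $b\in C_2$ one has $\length{ab}^2=r_1^2+r_2^2=1$, so every pair of points on different circles is a diameter, and hence
\[ u(S)=n_1n_2+u(S_1)+u(S_2). \]
The set $S_1$ has diameter at most $1$; if its diameter equals $1$ then Lemma~\ref{lowdim}\eqref{c} gives $u(S_1)=1$ (as $r_1>1/\sqrt3$), and otherwise $u(S_1)=0$, so in all cases $u(S_1)\leq 1$. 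Likewise Lemma~\ref{lowdim}\eqref{b} gives $u(S_2)\leq n_2$ if $n_2$ is odd and $u(S_2)\leq n_2-1$ if $n_2$ is even. Therefore $u(S)\leq g(n_2)$, where $g(k)=k(n-k)+k+1$ for odd $k$ and $g(k)=k(n-k)+k$ for even $k$, and so $u(S)\leq\max_{0\leq k\leq n}g(k)$.

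The next step is to evaluate this maximum. On odd arguments $g(k)=k(n+1-k)+1$ is a downward parabola with vertex $k=(n+1)/2$, and on even arguments it is smaller by $1$. A short case analysis according to $n\bmod4$ then pins down the optimal $k$: when the integer nearest $(n+1)/2$ is odd the maximum is $t_2(n)+\lceil n/2\rceil+1$, while when it is even --- which happens precisely when $n\equiv 3\pmod4$ --- the best odd $k$ and the best even $k$ tie at $t_2(n)+\lceil n/2\rceil$. I expect this bookkeeping to be the main, though entirely routine, part of the argument; it is the analogue of the ``easy but tedious exercise'' of the even-dimensional propositions, and in passing one checks that nothing is gained by letting both circles have radius $>1/\sqrt3$ (that option yields only $n_1n_2+2\leq t_2(n)+2$, which is smaller once $n\geq 6$).

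Finally I would supply matching constructions. For the optimal odd value of $n_2$ found above, let $C_2$ be the circle of radius $\bigl(2\cos\tfrac{\pi}{2n_2}\bigr)^{-1}\leq 1/\sqrt3$ and take $S_2$ to be the vertex set of a regular $n_2$-gon inscribed in it; its longest diagonals have length $1$ and form an $n_2$-cycle, so $u(S_2)=n_2$, the equality case of Lemma~\ref{lowdim}\eqref{b}. Let $C_1$ be the circle of radius $\sqrt{\,1-\bigl(2\cos\tfrac{\pi}{2n_2}\bigr)^{-2}\,}$; this radius is at least $1/2$, so $C_1$ carries a chord of length exactly $1$, and we let $S_1$ consist of the two endpoints of such a chord together with $n_1-2$ further points placed on a short arc beside them so that every other distance within $S_1$ is strictly below $1$, giving $u(S_1)=1$. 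All cross distances equal $1$ and all within-circle distances are at most $1$, so this set has diameter $1$ and exactly $n_1n_2+1+n_2=g(n_2)$ diameters; by the choice of $n_2$ this equals the asserted value, and for $n\equiv3\pmod4$ the same recipe with the optimal $n_2$ gives $t_2(n)+\lceil n/2\rceil$, again matching the upper bound.
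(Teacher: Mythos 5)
Your argument is correct and is essentially the paper's own proof: both reduce to the observation that the circle of larger radius (necessarily $\geq 1/\sqrt2>1/\sqrt3$) carries at most one diameter by Lemma~\ref{lowdim}\eqref{c} while the other carries at most $n_i$ or $n_i-1$ by Lemma~\ref{lowdim}\eqref{b}, and then maximise $n_1n_2+u(S_1)+u(S_2)$ over the residue of $n$ modulo $4$. You merely swap the labelling of the two circles and spell out the optimisation and the matching star-polygon-plus-unit-chord constructions, which the paper leaves as ``easily checked.''
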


\begin{proof}
Consider an optimised Lenz configuration $S$ of diameter $1$ on pairwise orthogonal circles $C_1$ and $C_2$.
Without loss of generality $r_1\leq r_2$.
We now apply Lemma~\ref{lowdim}\eqref{b}, \eqref{c}.
If $u(S_2)>1$, then $r_2\leq1/\sqrt{3}$ and $r_1\geq \sqrt{2/3}>r_2$, a contradiction.
Therefore, $u(S_2)\leq 1$.
Also, $u(S_1)\leq n_1$, and if $n_1$ is even, $u(S_1)\leq n_1-1$.
It follows that
\[ u(S) \leq \begin{cases}
n_1n_2 + n_1 + 1 & \text{if $n_1$ is odd,}\\
n_1n_2 + n_1     & \text{if $n_1$ is even.}
\end{cases}\]
By considering the four cases of $n$ modulo $4$, it is easily checked that the maximum over all nonnegative $n_i$ with $n_1+n_2=n$ is as in the statement of the theorem.
For $n\geq 6$ it is also easy to see that there are configurations that attain this maximum.
\end{proof}

\subsection{Odd dimensions \boldmath$d\geq 7$\unboldmath}\label{odd7}
We introduce the notion of a weak Lenz configuration.
Let $d\geq 7$ be odd, $p=(d-1)/2$, and consider any orthogonal decomposition $\R^d=V_0\oplus V_1\oplus\dots\oplus V_p$ with $\dim V_0=1$ and $\dim V_i=2$ ($i=1,\dots,p$).
For each $i=1,\dots,p$, let $\Sigma_i$ be the sphere in $V_0\oplus V_i$ with centre $o$ and radius $1/\sqrt{2}$, and let $C_i$ be the circle in $V_i$ with centre $o$ and radius $1/\sqrt{2}$.
Let $p^+$ and $p^-$ be the two points in $V_0$ at distance $1/\sqrt{2}$ from $o$.
Then $p^+$ and $p^-$ are the north and south poles of each $\Sigma_i$ when $C_i$ is considered to be its equator.

Let $i\neq j$.
If some $x\in\Sigma_i$ is at unit distance to some point of $\Sigma_j\setminus C_j$, then $x$ is at unit distance to all of $\Sigma_j$ (since it is already at unit distance to $C_j$).
By Lemma~\ref{geometry}, $x\in C_i$.
It follows that no point of $\Sigma_i\setminus C_i$ can be at unit distance to a point of $\Sigma_j\setminus C_j$.

A \define{strong Lenz configuration} of $n$ points in $\R^d$ is a  translate of a finite subset of $C_1\cup\dots\cup C_{p-1}\cup\Sigma_p$ for some orthogonal decomposition.
(This is merely the odd-dimensional ``Lenz configuration'' of Section~\ref{section2}.)
A \define{weak Lenz configuration} of $n$ points in $\R^d$ is a translate of a finite subset of a $\Sigma_1\cup\dots\cup\Sigma_p$ for some orthogonal decomposition.
Strong Lenz configurations are clearly weak.
If $S$ is a weak Lenz configuration, we assume without loss of generality that it is a subset of $\Sigma_1\cup\dots\cup\Sigma_p$, and we define $S_i:=S\cap\Sigma_i\setminus\{p^+,p^-\}$ ($i=1,\dots,p$), $S_0:=S\cap\{p^+,p^-\}$, $n_i:=\card{S_i}$ ($i=0,\dots,p$), $n:=\card{S}$.

\subsubsection{Unit distances}
Define
\[ u_d^L(n)=\max\{u(S): \text{$S$ is a weak Lenz configuration of $n$ points in $\R^d$}\}. \]
We call any weak Lenz configuration $S$ of $n$ points in $\R^d$ for which $u(S)=u_d^L(n)$ an \define{optimised Lenz configuration} (for unit distances).
Unlike the even-dimensional case we cannot give an expression for $u_d^L(n)$ more accurate than the estimate $u_d^L(n)=t_p(n)+\Theta(n^{4/3})$ due to Erd\H{o}s and Pach \cite{ErdosPach}.
However, we next show that an optimised Lenz configuration must be strong for $n$ sufficiently large, depending on $d$.
This implies that $u_d^L(n)$ can be determined if the function $f(n)$, which gives the maximum number of unit distances for $n$ points on a $2$-sphere of radius $1/\sqrt{2}$, is known.

\begin{proposition}
For each odd $d\geq 7$ there exists $N(d)$ such that all optimised Lenz configurations for unit distances on $n\geq N(d)$ points in $\R^d$ are strong Lenz configurations.
\end{proposition}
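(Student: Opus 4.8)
The plan is to combine an exact count of the unit distances in a weak Lenz configuration with the estimate $c'm^{4/3}\le f(m)\le cm^{4/3}$, valid for $m$ larger than an absolute constant (Lemma~\ref{lowdim}\eqref{new}), where $f(m)$ is the maximum number of unit distances among $m$ points on a $2$-sphere of radius $1/\sqrt2$ and $c,c'>0$ are absolute. Let $S$ be an optimised weak Lenz configuration, with $S_i,S_0,n_i,n_0,n$ as in Section~\ref{odd7}, and additionally let $a_i$ be the number of points of $S_i$ on $C_i$, so that $b_i:=n_i-a_i$ of them lie on $\Sigma_i\setminus C_i$. A short computation with the orthogonal decomposition (as in the discussion preceding this proposition, together with Lemma~\ref{geometry}) shows: a point of $C_i$ is at unit distance to every point of $\Sigma_j$ for all $j\ne i$; a point $x\in\Sigma_i\setminus C_i$ is at unit distance to $y\in\Sigma_j$ \textup{(}$j\ne i$\textup{)} if and only if $y\in C_j$, and to exactly two points of $C_i$; and each pole $p^+,p^-$ is at unit distance on $\Sigma_i$ precisely to the points of $C_i$, while $\length{p^+p^-}=\sqrt2$. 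Counting the unit distances between distinct $S_i,S_j$, between $S_0$ and the rest, and inside each $S_i$, and using Lemma~\ref{lowdim}\eqref{a} and \eqref{new} to split and bound the latter, we obtain
\[ u(S)=\sum_{1\le i<j\le p}\bigl(n_in_j-b_ib_j\bigr)+n_0\sum_{i=1}^{p}a_i+\sum_{i=1}^{p}u(S_i),\qquad u(S_i)\le a_i+2b_i+cb_i^{4/3}. \]
Since $S$ is strong exactly when at most one $b_i$ is positive, we assume for contradiction that $B:=\{i:b_i>0\}$ has $\card B\ge2$, and we fix $m\in B$ with $b_m=\max_ib_i$.

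\textbf{Case 1: $b_m>c_1$}, where $c_1$ is an absolute constant chosen so that $x-2-cx^{1/3}\ge1$ for all real $x\ge c_1$ \textup{(}possible since $x-2-cx^{1/3}\to\infty$\textup{)}. Form $S'$ by moving, for each $i\ne m$, all non-equatorial points of $S_i$ onto $C_i$, arranged to realise the maximum $u(S'_i)\ge n_i-1$ of Lemma~\ref{lowdim}\eqref{a}, while keeping $S_m$, the poles, and all $n_i$ fixed. Then $b'_i=0$ for $i\ne m$, so $\sum_{i<j}b'_ib'_j=0$ and $n_0\sum_ia'_i-n_0\sum_ia_i=n_0\sum_{i\ne m}b_i\ge0$, while the internal change for each $i\in B\setminus\{m\}$ is at least $(n_i-1)-(a_i+2b_i+cb_i^{4/3})=-b_i-1-cb_i^{4/3}$ and is $0$ for the remaining $i\ne m$. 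Hence, using $\sum_{i<j}b_ib_j\ge\sum_{i\in B\setminus\{m\}}b_mb_i$,
\[ u(S')-u(S)\ \ge\ \sum_{1\le i<j\le p}b_ib_j-\sum_{i\in B\setminus\{m\}}\bigl(b_i+1+cb_i^{4/3}\bigr)\ \ge\ \sum_{i\in B\setminus\{m\}}\bigl(b_mb_i-b_i-1-cb_i^{4/3}\bigr). \]
For each term $1\le b_i\le b_m$ gives $b_mb_i-b_i-1-cb_i^{4/3}=b_i(b_m-1-cb_i^{1/3})-1\ge(b_m-1-cb_m^{1/3})-1\ge1$, so $u(S')-u(S)\ge\card{B\setminus\{m\}}\ge1$, contradicting the optimality of $S$.

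\textbf{Case 2: $b_m\le c_1$.} Then $u(S_i)\le a_i+2c_1+cc_1^{4/3}\le n_i+O_d(1)$ for every $i$, so, since $n_0\le2$, $u(S)\le\sum_{1\le i<j\le p}n_in_j+3n+O_d(1)$. Choose $\ell$ with $n_\ell$ maximal, so $n_\ell\ge(n-2)/p$. Form $S'$ by moving, for each $i\ne\ell$, all non-equatorial points of $S_i$ onto $C_i$ \textup{(}optimally, as above\textup{)}, and by replacing $S_\ell$ with any configuration of $n_\ell$ points on $\Sigma_\ell$ that avoids the poles and has $f(n_\ell)$ unit distances, keeping the poles and all $n_i$ fixed. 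Then $b'_i=0$ for $i\ne\ell$, so $\sum_{i<j}b'_ib'_j=0$ and
\[ u(S')\ \ge\ \sum_{1\le i<j\le p}n_in_j+f(n_\ell)-O_d(1)\ \ge\ \sum_{1\le i<j\le p}n_in_j+c'\Bigl(\tfrac{n-2}{p}\Bigr)^{4/3}-O_d(1). \]
Comparing with the bound for $u(S)$ shows $u(S')>u(S)$ once $n\ge N(d)$, again contradicting optimality; hence for all $n\ge N(d)$ every optimised Lenz configuration is strong.

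The geometric facts behind the formula for $u(S)$ are essentially already recorded before the proposition, so the substance lies in the dichotomy. Case~1 is the delicate part: one must see that the quadratic cross-term losses $b_mb_i$ overwhelm the sub-quadratic internal gains $cb_i^{4/3}$, and the constant $c_1$ has to be calibrated against $c$ from Lemma~\ref{lowdim}\eqref{new}. Case~2 is then a soft order-of-magnitude count, resting on the observation that a bounded number of off-equatorial points on each sphere prevents any single sphere from contributing its full $\Theta(n^{4/3})$ worth of internal unit distances.
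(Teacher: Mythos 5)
Your argument is correct in substance, but it takes a noticeably different route from the paper's. The paper's proof picks, inside $S_1\setminus C_1$, a single point $x$ of low degree within $S_1\setminus C_1$ (degree $O(\card{S_1\setminus C_1}^{1/3})=O(n^{1/3})$, by averaging from Lemma~\ref{lowdim}\eqref{new}), moves that one point onto $C_1$, and deduces from optimality that $\sum_{i\geq 2}\card{S_i\setminus C_i}=O(n^{1/3})$; repeating from $S_2\setminus C_2$ gives $\card{S_i\setminus C_i}=O(n^{1/3})$ for all $i$, whence $u(S)\leq t_p(n)+O(n)$, contradicting the black-box fact $u_d^L(n)=t_p(n)+\Theta(n^{4/3})$. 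You instead write down the exact bookkeeping identity, split on whether $\max_i b_i$ exceeds a constant calibrated against the $O(m^{4/3})$ upper bound, and in the large case perform a bulk exchange in which the quadratic cross-term gain $b_mb_i$ beats the $cb_i^{4/3}$ internal loss, while in the small case you re-derive the needed $\Omega(n^{4/3})$ comparison by exhibiting a better configuration directly. The paper's argument is shorter; yours is more self-contained (Case~2 replaces the citation of the Erd\H{o}s--Pach asymptotics by an explicit construction, though both ultimately need the Erd\H{o}s--Hickerson--Pach lower bound on the radius-$1/\sqrt{2}$ sphere) and Case~1 gives slightly more structural information. Both proofs rest on the same two geometric pillars: points of $\Sigma_i\setminus C_i$ and $\Sigma_j\setminus C_j$ are never at unit distance, and unit distances on a $2$-sphere number $\Theta(m^{4/3})$.

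One small imprecision in Case~1: you cannot in general achieve $u(S_i')\geq n_i-1$ by relocating only the $b_i$ non-equatorial points while leaving the $a_i$ points of $S_i\cap C_i$ where they are (those $a_i$ points may sit in pairwise distinct ``square classes'' of $C_i$, and then each added point gains at most two unit distances). The fix is immediate: either rearrange \emph{all} of $S_i$ on $C_i$ --- harmless, since the cross-sphere and pole contributions depend only on $a_i$ and $b_i$ --- or settle for the weaker bound $u(S_i')\geq u(S_i\cap C_i)\geq u(S_i)-2b_i-cb_i^{4/3}$ and enlarge $c_1$ accordingly. With that amendment the proof is complete.
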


\begin{proof}
Let $S$ be an optimised Lenz configuration on $n$ points.
Suppose $S$ is not a strong Lenz configuration.
We aim for a contradiction.

Without loss of generality $S_i\setminus C_i\neq\emptyset$ for $i=1, 2$.
Since $u(S_1\setminus C_1)=O(\card{S_1\setminus C_1}^{4/3})$ (Lemma~\ref{lowdim}\eqref{new}) and $S_1\setminus C_1\neq\emptyset$, there exists $x\in S_1\setminus C_1$ with $u(x,S_1\setminus C_1)=O(\card{S_1\setminus C_1}^{1/3})=O(n^{1/3})$.
Also, since $x\neq p^\pm$, $u(x,C_1)\leq 2$.
Therefore, $u(x,S_1)=O(n^{1/3})$.
Note that for each $i=2,\dots,p$, $x$ is at distance $1$ to all points in $S_i\cap C_i$, but to none of $S_i\setminus C_i$.
If we replace $x$ by a new point on $C_1$, we lose at most $u(x,S_1)$ unit distances and gain $\sum_{i=2}^p\card{S_i\setminus C_i}$.
Since $u(S)$ is the maximum over all weak Lenz configurations,
\[\sum_{i=2}^p\card{S_i\setminus C_i}\leq u(x,S_i)=O(n^{1/3}).\]

By instead considering a point $x\in S_2\setminus C_2$ we obtain similarly that
\[\sum_{\substack{i=1\\ i\neq 2}}^p\card{S_i\setminus C_i}=O(n^{1/3}).\]
Therefore, $\card{S_i\setminus C_i}=O(n^{1/3})$ for each $i=1,\dots,p$.

We can now bound $u(S)$ from above.
First note that each point of $S_0$ is at unit distance to all of $C_i$ and none of $\Sigma_i\setminus C_i$, each point of $\Sigma_i\setminus\{p^+,p^-\}$ is at unit distance to at most two points of $C_i$,
and $u(S_i\cap C_i)\leq\card{S_i\cap C_i}$ (Lemma~\ref{lowdim}\eqref{a}).
This gives:
\begin{align*}
u(S_i) &\leq u(S_0\cup S_i)\\
&= u(S_0,S_i)+u(S_i\cap C_i) + u(S_i\cap C_i, S_i\setminus C_i) + u(S_i\setminus C_i)\\
&\leq 2\card{S_i\cap C_i}+\card{S_i\cap C_i} +2\card{S_i\setminus C_i}+O(\card{S_i\setminus C_i}^{4/3})\\
&= O(n) + O((n^{1/3})^{4/3}) = O(n).
\end{align*}
Therefore,
\begin{align*}
u(S) &\leq t_p(n) + u(S_0\cup S_1) + \sum_{i=2}^p u(S_i)\\
&= t_p(n)+O(n),
\end{align*}
contradicting $u(S)=u_d^L(n)=t_p(n)+\Theta(n^{4/3})$ for large $n$.
\end{proof}

\subsubsection{Diameters}
Define
\[
\begin{split}
M_d^L(n)=\max\{u(S): S &\text{ is a diameter $1$ weak Lenz configuration}\\
&\text{ of $n$ points in $\R^d$}\}.
\end{split}
\]
We call any diameter $1$ weak Lenz configuration $S$ of $n$ points in $\R^d$ for which $u(S)=M_d^L(n)$ an \define{optimised Lenz configuration} (for diameters).

We show, exactly as the unit distance case, that an optimised Lenz configuration must be strong for large $n$, and determine the exact value of $M_d^L(n)$.

\begin{proposition}
For each odd $d\geq 7$ there exists $N(d)$ such that all optimised Lenz configurations for diameters on $n\geq N(d)$ points in $\R^d$ are strong Lenz configurations.
Furthermore,
\[ M_d^L(n) = t_p(n)+\left\lceil\frac{n}{p}\right\rceil+p-1=t_p(n-1)+n-1+p.\]
\end{proposition}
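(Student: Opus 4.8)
The plan is to mimic the exchange argument of the unit‑distance case but now exploit that on a circle of radius $1/\sqrt2>1/\sqrt3$ and on a $2$‑sphere of radius $1/\sqrt2$ the diameter bounds of Lemma~\ref{lowdim}\eqref{c} and Lemma~\ref{lowdim}\eqref{e} are \emph{linear}. First I set up the notation of Section~\ref{odd7}: assume $S\subseteq\Sigma_1\cup\dots\cup\Sigma_p$, write $a_i=\card{S_i\cap C_i}$, $b_i=\card{S_i\setminus C_i}$, $K=\{i:b_i>0\}$, $k=\card K$. Since $\length{p^+p^-}=\sqrt2>1$ we have $n_0\le1$. Writing each point of $\Sigma_i$ as $x_0+x_i$ with $x_0$ in the line $V_0$ and $x_i\in V_i$, a one‑line computation gives, for $i\ne j$, $\length{xy}^2=1-2\ipr{x_0}{y_0}$; hence $\length{xy}=1$ iff $x\in C_i$ or $y\in C_j$, $\length{xy}\le1$ iff $\ipr{x_0}{y_0}\ge0$, so $u(S_i,S_j)=n_in_j-b_ib_j$ and $u(p^{\pm},S_i)=a_i$. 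In particular, when $k\ge2$ the diameter hypothesis forces all points of $\bigcup_i(S_i\setminus C_i)$ to lie strictly on one side of every equator, so there is a distinguished pole $p^+$ with $\length{p^+y}<1$ for every cap point $y$.

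I then split into $n_0=1$ and $n_0=0$. If $n_0=1$, say $p^+\in S$, then grouping $\{p^+\}\cup S_i$ inside $\Sigma_i$ and applying Lemma~\ref{lowdim}\eqref{e} (for $i\in K$) or Lemma~\ref{lowdim}\eqref{c} (for $i\notin K$) yields $u(S_i)+u(p^+,S_i)\le n_i+1$ for every $i$, whence
\[ u(S)=\sum_i\bigl(u(S_i)+u(p^+,S_i)\bigr)+\sum_{i<j}u(S_i,S_j)\le (n-1+p)+t_p(n-1)-\sum_{i<j}b_ib_j .\]
For $k\ge2$ the last sum is $\ge1$, so $u(S)<t_p(n-1)+n-1+p\le M_d^L(n)$, contradicting optimality; hence $k\le1$. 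If $n_0=0$, I first show $a_j\le4$ for each $j\in K$: picking $x\in S_j\setminus C_j$ with $u(x,S_j\setminus C_j)\le2$ (possible since $u(S_j\setminus C_j)\le b_j$ by Lemma~\ref{lowdim}\eqref{e}), the identity above gives $u(x,S)\le 4+\sum_{\ell\ne j}a_\ell$, while replacing $x$ by the pole $p^+$ produces a diameter‑$1$ weak Lenz configuration $S'$ with $u(S')\ge u(S)-4+a_j$, so optimality forces $a_j\le4$. Next, from $M_d^L(n)\ge t_p(n-1)+n-1+p>t_p(n)$ (lower‑bound construction below) and $u(S)\le\sum_iu(S_i)+\sum_{i<j}n_in_j\le n+\sum_{i<j}n_in_j$ one gets $t_p(n)-\sum_{i<j}n_in_j<n$, which pins the sizes: $n_i>n/p-O(\sqrt n)$ for all $i$, so $b_j=n_j-a_j>n/p-O(\sqrt n)$ for $j\in K$. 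If $k\ge2$, then $\sum_{i<j}b_ib_j\ge b_{j_1}b_{j_2}>n^2/p^2-O(n^{3/2})$, so $u(S)\le n+t_p(n)-n^2/p^2+O(n^{3/2})<t_p(n)$ for $n$ large — contradiction. Thus $k\le1$ in all cases.

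Since $k\le1$ means $S_i\subseteq C_i$ for all but at most one index, and $S_0\subseteq\{p^+,p^-\}\subseteq\Sigma_p$ after relabelling the exceptional index to $p$, the optimised configuration is strong. To finish I optimise over strong configurations: writing $n_i=\card{S\cap C_i}$ for $i<p$ and $n_p=\card{S\cap\Sigma_p}$, Lemma~\ref{lowdim}\eqref{c} gives $u(S\cap C_i)\le1$, Lemma~\ref{lowdim}\eqref{e} gives $u(S\cap\Sigma_p)\le n_p$, and every cross distance is exactly $1$, so $u(S)\le(p-1)+n_p+\sum_{1\le i<j\le p}n_in_j$; substituting $m_p=n_p-1$ and minimising $\sum_{i<p}n_i^2+m_p^2$ over partitions of $n-1$ into $p$ parts gives $u(S)\le t_p(n-1)+n-1+p$. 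Equality is attained, for $n$ large, by letting $n_1,\dots,n_{p-1},n_p-1$ be a balanced partition of $n-1$, placing $n_p\ (\ge3)$ points on $\Sigma_p$ realising $n_p$ diameters (Lemma~\ref{lowdim}\eqref{e}) and choosing the points on each $C_i$ within a quarter‑circle with one diameter among them; the whole set then has diameter exactly $1$. This establishes both the lower bound used above and the value, and the stated identity follows from $t_p(n)-t_p(n-1)=n-\lceil n/p\rceil$.

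The main obstacle is the exchange step in the case $n_0=0$: replacing a cap point by a pole must leave the configuration of diameter $\le1$, and this is exactly where the fact that all cap points lie on one side of every equator — forced by the distance identity together with the diameter hypothesis and $k\ge2$ — is indispensable. This is the one genuine point of difference from, and additional delicacy over, the unit‑distance case.
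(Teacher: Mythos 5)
Your argument is correct, and it shares the paper's overall skeleton — the explicit lower-bound construction, the split on whether a pole belongs to $S$, the cross-term deficiency $u(S_i,S_j)=n_in_j-b_ib_j$, and (in the case $S_0\neq\emptyset$) literally the same chain of inequalities ending in $u(S)\leq t_p(n)+\lceil n/p\rceil+p-1-\sum_{i<j}b_ib_j$. Where you genuinely diverge is the case $S_0=\emptyset$: the paper removes \emph{all} of $S_1$ and replaces it by a pole together with $n_1-1$ points on $C_1$, reading off $\sum_{i\geq2}k_i=0$ from a gain/loss count, whereas you exchange a \emph{single} low-degree cap point for a pole to force $a_j\leq 4$ for each $j\in K$, then pin the class sizes to $n/p+O(\sqrt n)$ via $t_p(n)-\sum_{i<j}n_in_j<n$ and kill $k\geq 2$ with the resulting $\Omega(n^2)$ deficiency $\sum b_ib_j$. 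Your route costs an extra quantitative step (the size-pinning) but buys two things: you make explicit why the pole can be inserted without increasing the diameter (all cap points lie strictly on one side of every equator once $k\geq2$ — a point the paper's exchange also needs but does not spell out), and your single-point exchange sidesteps the delicate bookkeeping of the paper's wholesale replacement, in which the pole is adjacent only to the $n_j-k_j$ equatorial points of each other class, so the stated gain $n_1+k_1\sum_{i\geq2}k_i$ is really $n_1+(k_1-1)\sum_{i\geq2}k_i$ and is vacuous when $k_1=1$. You also rederive the exact value by a separate clean optimisation over strong configurations rather than extracting it from the $S_0\neq\emptyset$ computation; both are fine. Two small points of hygiene: state the lower-bound construction before invoking $M_d^L(n)\geq t_p(n-1)+n-1+p$ in the upper-bound arguments, and note that the modified set $S'$ has diameter exactly $1$ (it does, since some $a_\ell>0$ for an optimised configuration and the pole is at distance exactly $1$ from equatorial points), so that $u(S')\leq M_d^L(n)$ legitimately applies.
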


\begin{proof}
Choose a set $S$ of $n$ points equally distributed between the orthogonal circles $C_1,\dots,C_{p-1}$ and $2$-sphere $\Sigma_p$ such that the diameter of each $S\cap C_i$ is $1$ and furthermore $\card{S\cap\Sigma_p}=\lceil n/p\rceil$, $\card{S\cap C_p}=\lceil n/p\rceil-1$ and $p^+\in S$.
Then clearly $u(S)=t_p(n)+\lceil n/p\rceil +p-1$.
Therefore, $M_d^L(n)\geq t_p(n)+\lceil n/p\rceil +p-1$.
We need this lower bound in a moment.

Now let $S$ be any optimised Lenz configuration on $n$ points.
Let $k_i:=\card{S_i\setminus C_i}$ ($i=1,\dots,p$).
We have to show that $S$ is a strong Lenz configuration, i.e., that $k_i=0$ for all $i=1,\dots,p$ except at most one.

First consider the case where $S_0\neq\emptyset$, where without loss of generality, $S_0=\{p^+\}$.
Then
\begin{align*}
u(S) &= u(S\setminus\{p^+\})+\sum_{i=1}^p u(p^+,S_i)\\
&= \sum_{1\leq i<j\leq p} u(S_i,S_j)+\sum_{i=1}^p u(S_i) + \sum_{i=1}^p u(p^+,S_i)\\
&= \sum_{1\leq i<j\leq p} u(S_i,S_j)+ \sum_{i=1}^p u(S_i\cup\{p^+\})\\
&= \sum_{1\leq i<j\leq p} \card{S_i}\card{S_j} - \sum_{1\leq i<j\leq p} k_i k_j + \sum_{i=1}^p u(S_i\cup\{p^+\})\\
&\leq t_p(n-1) - \sum_{1\leq i<j\leq p} k_i k_j + \sum_{i=1}^p (n_i+1)\\
&= t_p(n-1) - \sum_{1\leq i<j\leq p} k_i k_j + n-1+p\\
&= t_p(n) + \left\lceil\frac{n}{p}\right\rceil +p-1 - \sum_{1\leq i<j\leq p} k_i k_j.
\end{align*}
Since $u(S)=M_d^L(n)\geq t_p(n)+\lceil n/p\rceil +p-1$, we obtain $\sum_{1\leq i<j\leq p} k_i k_j = 0$, which implies that $k_i=0$ for all $i$ except one.
This proves the theorem for the case $S_0\neq\emptyset$.

Next consider the case where $S_0=\emptyset$.
Without loss of generality $S_1\setminus C_1\neq\emptyset$, otherwise $u(S)\leq t_p(n)+p$, a contradiction.
By Lemma~\ref{lowdim}\eqref{e}, $u(S_1)\leq n_1$.
If we remove the points in $S_1$ and replace them by placing $p^+$ into $S_0$ and placing $n_1-1$ points of diameter $1$ on $C_1$ to form another set $S'$ of diameter $1$, then we lose at most $n_1$ diameters and gain $n_1+k_1\sum_{i=2}^p k_i$.
By maximality, $\sum_{i=2}^p k_i=0$, i.e., the original $S$ was already a strong Lenz configuration 
and $u(S)=u(S')$.
We have already shown that an optimised Lenz configuration that contains $p^+$ satisfies $u(S')=t_p(n) + \left\lceil\frac{n}{p}\right\rceil +p-1$.
This finishes the case $S_0=\emptyset$.
\end{proof}

\subsection{The dimension \boldmath$d=5$\unboldmath}
Consider an orthogonal decomposition $\R^5=V_0\oplus V_1\oplus V_2$ such that $\dim V_0=1$ and $\dim V_1=\dim V_2=2$.
Choose $r_1\in (0,1)$.
Let $\Sigma_1$ be the $2$-sphere in $V_0\oplus V_1$ with centre $o$ and radius $r_1$.
Let $C_2$ be the circle in $V_2$ with centre $o$ and radius $r_2:=\sqrt{1-r_1^2}$.
Then any point of $\Sigma_1$ and any point of $C_2$ are at unit distance.
We call a translate of a finite subset of $\Sigma_1\cup  C_2$ a \define{strong Lenz configuration} (equivalent to the $5$-dimensional ``Lenz configuration'' of Section~\ref{section2}).

To define a weak Lenz configuration takes more care than for odd $d\geq 7$.
Choose an additional parameter $r\in [0,r_1)$ and a point $o'\in V_0$ at distance $r$ to $o$.
Let $C_1$ be the circle with centre $o'$ and radius $s_1:=\sqrt{r_1^2-r^2}$ in the plane of $V_0\oplus V_1$ parallel to $V_1$ that passes through $o'$.
Let $\Sigma_2$ be the $2$-sphere in $V_0\oplus V_2$ with centre $o'$ and radius $s_2:=\sqrt{r_2^2+r^2}$.
Then $C_i\subset\Sigma_i$ ($i=1,2$) (Figure~\ref{fig4}).
\begin{figure}
\begin{center}
\begin{overpic}[scale=0.6]{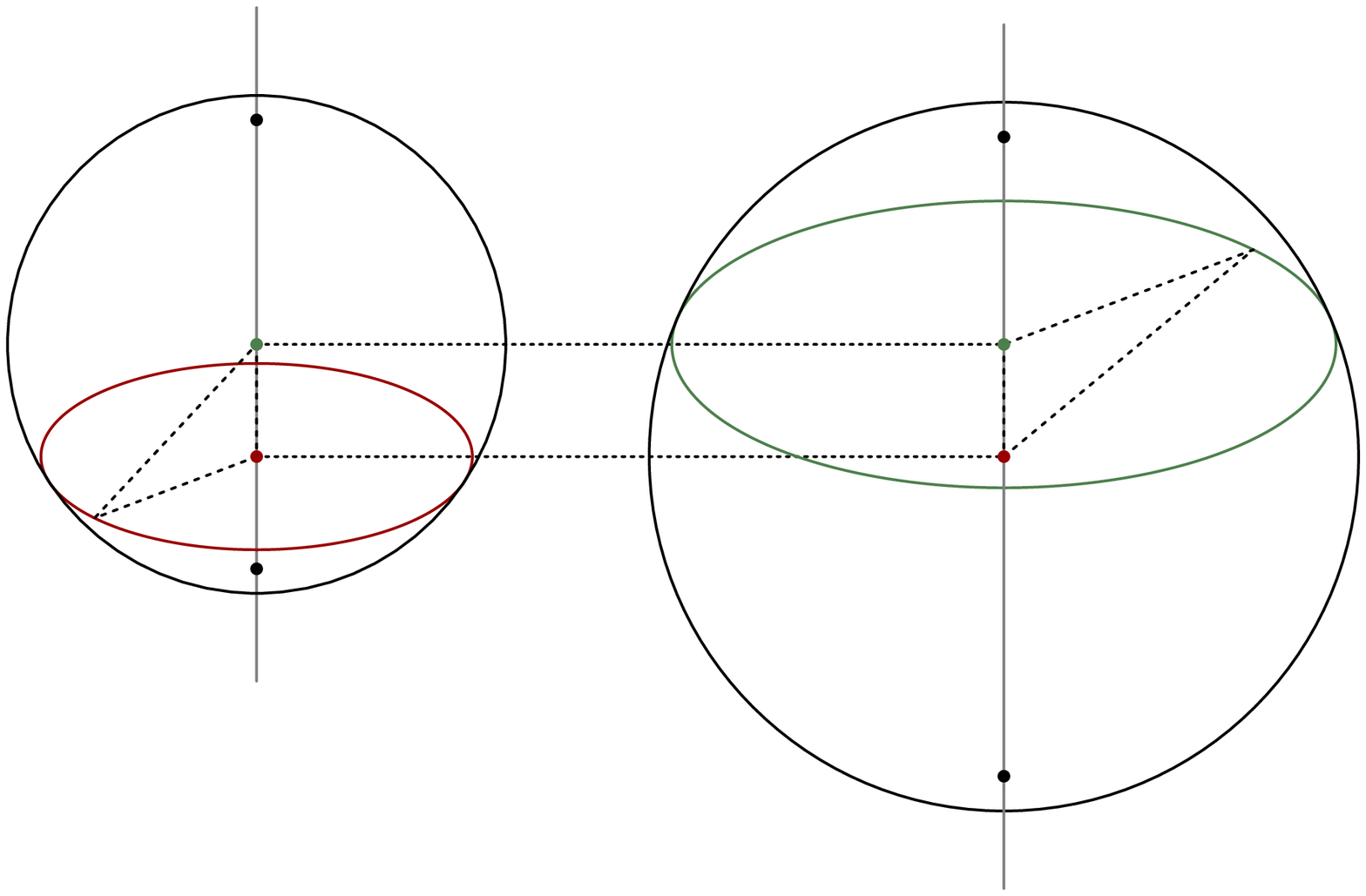}
\put(23,29){$o'$}
\put(23,40){$o$}
\put(23,60){$V_0$}
\put(1,60){$V_0\oplus V_1$}
\put(23,52){$p_1^+$}
\put(23,20){$p_1^-$}
\put(23,34.5){$r$}
\put(14,34){$r_1$}
\put(17,28){$s_1$}
\put(10,37.5){$C_1$}
\put(6,52){$\Sigma_1$}

\put(74,30.5){$o'$}
\put(70.5,40){$o$}
\put(74,59){$V_0$}
\put(86,59){$V_0\oplus V_2$}
\put(74,52){$p_2^+$}
\put(74,10){$p_2^-$}
\put(70.5,34.5){$r$}
\put(78,43.5){$r_2$}
\put(81,36){$s_2$}
\put(63.5,49.5){$C_2$}
\put(90,50){$\Sigma_2$}
\end{overpic}
\end{center}
\caption{Spheres $\Sigma_i$ and circles $C_i$ of a weak Lenz configuration in $\R^5$}\label{fig4}
\end{figure}
Note that $s_1^2+s_2^2=1$, hence any point of $\Sigma_2$ and any point of $C_1$ are at unit distance.
Similar to the discussion in Section~\ref{odd7} for odd $d\geq 7$, no point of $\Sigma_1\setminus C_1$ can be at unit distance to a point of $\Sigma_2\setminus C_2$.
We call a translate of a finite subset of $\Sigma_1\cup\Sigma_2$ a \define{weak Lenz configuration}.
As before, strong Lenz configurations are clearly weak.
Assume without loss of generality that $S\subset\Sigma_1\cup\Sigma_2$.
There are also \emph{poles}: $\{p_1^+, p_1^-\} := V_0\cap\Sigma_1$ and $\{p_2^+, p_2^-\} := V_0\cap\Sigma_2$.
In general, $\Sigma_1$ and $\Sigma_2$ may not have a point in common.
If they do, the common points will be coinciding poles.
Define $S_0:=S\cap V_0$, $S_i:=S\cap\Sigma_i\setminus V_0$ ($i=1,2$) and $n_i:=\card{S_i}$ ($i=0,1,2$), $n:=\card{S}$.

\subsubsection{Unit distances}
Define
\[ u_5^L(n)=\max\{u(S): \text{$S$ is a weak Lenz configuration of $n$ points in $\R^5$}\}. \]
We call any weak Lenz configuration $S$ of $n$ points in $\R^5$ satisfying $u(S)=u_5^L(n)$ an \define{optimised Lenz configuration} (for unit distances).
Again the best known estimate is $u_5^L(n)=t_2(n)+\Theta(n^{4/3})$, due to Erd\H{o}s and Pach \cite{ErdosPach}.
We show that an optimised Lenz configuration is strong for sufficiently large $n$.
As before, this implies that $u_5^L(n)$ can be determined if the function $g(n)$, which gives the maximum number of unit distances for $n$ points on a $2$-sphere of arbitrary radius, is known.

\begin{proposition}
For all sufficiently large $n$, all optimised Lenz configurations for unit distances on $n$ points in $\R^5$ are strong Lenz configurations.
\end{proposition}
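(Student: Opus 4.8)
The plan is to mimic the odd $d\geq 7$ argument, but with extra care because the two spheres $\Sigma_1,\Sigma_2$ of a weak Lenz configuration in $\R^5$ can have different radii and neither need have radius $1/\sqrt2$. Let $S$ be an optimised Lenz configuration and suppose it is not strong, so that both $S_1\setminus C_1$ and $S_2\setminus C_2$ are nonempty (if only one of them is nonempty we already have a strong configuration). The first step is a transfer argument: a point $x\in\Sigma_i\setminus C_i$ is at unit distance to all of $C_j$ (because $C_j\subset\Sigma_j$ and $s_1^2+s_2^2=1$), to none of $\Sigma_j\setminus C_j$ (by the non-orthogonality remark already established), and to at most two points of $\Sigma_i$ lying at unit distance from it within $\Sigma_i$ — wait, that last claim needs the sphere bound, not a circle bound, so instead we use $u(x,\Sigma_i\setminus C_i)=O(n^{1/3})$ obtained from Lemma~\ref{lowdim}\eqref{new} by averaging, together with $u(x,C_i)\le 2$.

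Concretely: since $u(S_1\setminus C_1)=O(\card{S_1\setminus C_1}^{4/3})$ by Lemma~\ref{lowdim}\eqref{new}, there is some $x\in S_1\setminus C_1$ with $u(x,S_1\setminus C_1)=O(n^{1/3})$; as $x$ is not a pole, $u(x,C_1)\le 2$, so $u(x,S_1)=O(n^{1/3})$. Replacing $x$ by a point of $C_1$ loses at most $u(x,S_1)=O(n^{1/3})$ unit distances (the distances from $x$ to $S_2$ all involved points of $S_2\cap C_2$, which a point of $C_1$ also sees) and gains $\card{S_2\setminus C_2}$ unit distances (a point of $C_1$ sees all of $\Sigma_2$). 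By optimality $\card{S_2\setminus C_2}=O(n^{1/3})$, and symmetrically $\card{S_1\setminus C_1}=O(n^{1/3})$. Now bound $u(S)$: write $u(S)=u(S_0,S)+u(S_1\cup S_2)$ and split $u(S_1\cup S_2)=\card{S_1}\card{S_2}-k_1k_2+u(S_1)+u(S_2)$ where $k_i=\card{S_i\setminus C_i}$, using that a point of $S_i\setminus C_i$ sees none of $S_j\setminus C_j$; then $\card{S_1}\card{S_2}\le t_2(n)$, the terms $u(S_0,S)$, $u(S_i\cap C_i)$, the cross terms $u(S_i\cap C_i,S_i\setminus C_i)\le 2k_i$, and $u(S_i\setminus C_i)=O(k_i^{4/3})=O(n^{4/9})$ are all $O(n)$. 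Hence $u(S)=t_2(n)+O(n)$, contradicting $u(S)=u_5^L(n)=t_2(n)+\Theta(n^{4/3})$ (Erd\H{o}s--Pach \cite{ErdosPach}).

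The one genuinely new wrinkle, compared to $d\geq 7$, is that in $\R^5$ the optimisation is over the choice of radius $r_1$ and the offset $r$ as well, so ``optimised'' ranges over a larger family; but the transfer argument only compares $S$ to another \emph{weak} Lenz configuration obtained by moving one point, and every weak Lenz configuration on the same pair $\Sigma_1,\Sigma_2$ is admissible, so the inequality $\card{S_j\setminus C_j}\le u(x,S_i)$ goes through unchanged. The main thing to check carefully is therefore the bookkeeping in the case $S_0\neq\emptyset$: a pole $p_1^+$ of $\Sigma_1$ need not be a pole of $\Sigma_2$ and its unit-distance pattern must be accounted for, but since $\card{S_0}\le 4$ this contributes only $O(n)$ and does not affect the conclusion. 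I expect this counting step — verifying that every contribution other than $\card{S_1}\card{S_2}$ is $O(n)$ once $k_1,k_2=O(n^{1/3})$ — to be the only place requiring attention; everything else is a direct transcription of the $d\geq 7$ proof.
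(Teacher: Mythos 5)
Your first half---the averaging argument from Lemma~\ref{lowdim}\eqref{new} giving a point $x\in S_i\setminus C_i$ with $u(x,S_i)=O(n^{1/3})$, the replacement of $x$ by a point of $C_i$ to force $\card{S_1\setminus C_1}+\card{S_2\setminus C_2}=O(n^{1/3})$, and the resulting bound $u(S)\leq t_2(n)+O(n)$ contradicting $u_5^L(n)=t_2(n)+\Theta(n^{4/3})$---is essentially identical to the paper's. But there is a genuine gap at the outset: your parenthetical claim that ``if only one of $S_1\setminus C_1$, $S_2\setminus C_2$ is nonempty we already have a strong configuration'' is false, and it is exactly the point where $d=5$ differs from odd $d\geq 7$. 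A strong Lenz configuration is a subset of $\Sigma_1\cup C_2$, whereas a weak one lives on $\Sigma_1\cup\Sigma_2$; even once you know, say, $S_2\setminus C_2=\emptyset$, the set $S_0=S\cap V_0$ may contain a pole $p_2^+$ of $\Sigma_2$ that does not lie on $\Sigma_1$. Such a point belongs to $\Sigma_2\setminus C_2$ but not to $\Sigma_1\cup C_2$, so $S$ is still not strong. You address $S_0$ only as an $O(1)$ contribution to the edge count in the contradiction step, which is not where the difficulty lies.

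The paper closes this case with a further replacement argument that you would need to supply. If $p_2^+\in S_0\setminus\Sigma_1$, then $p_2^+\neq p_1^{\pm}$; a point of $V_0$ is at unit distance to all of $C_2$ or to none of it, and $p_1^{\pm}$ are the only points of $V_0$ at unit distance to $C_2$, so $p_2^+$ has no neighbours in $S_2\subset C_2$. On the other hand, the points of $\Sigma_1$ at unit distance from $p_2^+$ are exactly those of $C_1$ (since $\length{p_2^+c}^2=s_2^2+s_1^2=1$ for $c\in C_1$, and a sphere-sphere intersection containing a circle equals that circle). Hence replacing $p_2^+$ by a new point of $C_2$ loses at most one unit distance (to $p_2^-$) and gains $\card{S\cap\Sigma_1\setminus C_1}$, so optimality forces $\card{S\cap\Sigma_1\setminus C_1}\leq 1$. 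But then all of $S$ except $O(1)$ points lies on two orthogonal circles, giving $u(S)\leq t_2(n)+O(n)$, again contradicting $u(S)=t_2(n)+\Theta(n^{4/3})$. Only after this step may one conclude $S_0\subset\Sigma_1$ and hence that $S$ is strong.
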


\begin{proof}
Let $S$ be an optimised Lenz configuration on $n$ points.
Suppose that $S_1\setminus C_1\neq\emptyset$ and $S_2\setminus C_2\neq\emptyset$.
Then, using Lemma~\ref{lowdim}\eqref{new}, there exist points $x_i\in S_i\setminus C_i$ with $u(x_i,S_i\setminus C_i)=O(n^{1/3})$ ($i=1,2$).
Since $x_i\notin S_0$, $u(x_i,C_i)\leq 2$.
Thus $u(x_i,S_i)=O(n^{1/3})$.
If we replace each $x_i$ by a new point on $C_i$, we lose at most $O(n^{1/3})$ unit distances and gain $\card{S_1\setminus C_1}+\card{S_2\setminus C_2}$.
Since $S$ is extremal, $\card{S_1\setminus C_1}+\card{S_2\setminus C_2}=O(n^{1/3})$.
We bound $u(S)$ from above as in the case of odd $d\geq 7$.
For each $i=1,2$:
\begin{align*}
u(S_i) &\leq u(S_i\cup S_0)\\
&= u(S_0)+u(S_0,S_i)+u(S_i\cap C_i)+u(S_i\cap C_i,S_i\setminus C_i)+u(S_i\setminus C_i)\\
&\leq 4+4\card{S_i\cap C_i}+\card{S_i\cap C_i}+2\card{S_i\setminus C_i}+O(\card{S_i\setminus C_i}^{4/3})\\
&= O(n),
\end{align*}
hence,
\begin{align*}
u(S) &= u(S_1,S_2)+u(S_0\cup S_1)+u(S_0\cup S_2)+u(S_0)+u(S_1)+u(S_2)\\
&\leq t_2(n)+O(n),
\end{align*}
contradicting $u(S)=t_2(n)+\Theta(n^{4/3})$.

Therefore, some $S_i\setminus C_i=\emptyset$, without loss of generality $S_2\setminus C_2=\emptyset$.
To show that $S$ is a strong Lenz configuration, it remains to show that $S_0\subset\Sigma_1$.
Suppose then without loss of generality that $p_2^+\in S_0$ and $p_2^+\notin\Sigma_1$.
Then $p_1^\pm\neq p_2^+$.
Since $p_1^+$ is at unit distance to all of $C_2$, and $p_1^+$ and $p_2^+$ are different points in $V_0$, it follows that $p_2^+$ is not at unit distance to any point in $S_2$.
If we replace $p_2^+$ by a new point on $C_2$, we lose at most one unit distance (possibly between $p_2^+$ and $p_2^-$), and gain $\card{S\cap\Sigma_1\setminus C_1}$ unit distances.
By extremality, $\card{S\cap\Sigma_1\setminus C_1}\leq 1$.
Therefore, except for at most $3$ points (in addition, $p_2^+\in S_0$ and possibly $p_2^-\in S_0$), $S$ is on two orthogonal circles, and for this essentially $4$-dimensional configuration we obtain $u(S)\leq t_2(n)+O(n)$ as before, a contradiction.

It follows that $S$ is a strong Lenz configuration.
\end{proof}

\subsubsection{Diameters}
Define
\[
\begin{split}
M_5^L(n)=\max\{u(S): S &\text{ is a diameter $1$ weak Lenz configuration}\\
&\text{ of $n$ points in $\R^5$}\}.
\end{split}
\]
We call any diameter $1$ weak Lenz configuration $S$ of $n$ points in $\R^5$ satisfying $u(S)=M_5^L(n)$ an \define{optimised Lenz configuration} (for diameters).
Again an optimised Lenz configuration is strong for large $n$, and  the exact value of $M_5^L(n)$ can be determined.
However, this case is more intricate than odd $d\geq 7$.

\begin{proposition}
For all sufficiently large $n$, all optimised Lenz configurations for diameters on $n$ points in $\R^5$ are strong Lenz configurations.
Furthermore, $M_5^L(n)=t_2(n)+n$.
\end{proposition}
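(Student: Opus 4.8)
The plan is to pin down $M_5^L(n)=t_2(n)+n$ by matching bounds: a lower bound from an explicit strong configuration, an upper bound proved first for strong configurations, and then a reduction of the optimised case to the strong case by the relocation technique already used for odd $d\geq 7$ and for unit distances in $\R^5$. For the lower bound, choose $a,b\geq 0$ with $a+b=n$ maximising $ab+2a-1$; a one-line computation gives maximum value $t_2(n)+n$, attained with $a\approx n/2$, hence $\geq 6$ for large $n$. By Lemma~\ref{lowdim}\eqref{d} there is a $2$-sphere $\Sigma_1$ of some radius $r_1<1$ carrying $a$ points of diameter $1$ with $2a-2$ unit distances; take $C_2$ of radius $\sqrt{1-r_1^2}$ in an orthogonal plane through the centre, and put the remaining $b$ points on a short arc of $C_2$ realising exactly one unit distance with all mutual distances $\leq 1$ (possible since $r_1$ is bounded away from $1$). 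Every point of $\Sigma_1$ is at distance $1$ from every point of $C_2$, so this set has diameter $1$ and $(2a-2)+1+ab=t_2(n)+n$ diameters.

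Next I would bound $u(S)$ for a diameter-$1$ strong configuration $S\subseteq\Sigma_1\cup C_2$, with $a=\card{S\cap\Sigma_1}$, $b=\card{S\cap C_2}$, $a+b=n$. All $ab$ cross pairs are diameters. If $r_1\geq 1/\sqrt 2$ then $u(S\cap\Sigma_1)\leq a$ by Lemma~\ref{lowdim}\eqref{e} and $u(S\cap C_2)\leq b$ by Lemma~\ref{lowdim}\eqref{b}, so $u(S)\leq ab+a+b\leq t_2(n)+n$. If $r_1<1/\sqrt 2$ then $C_2$ has radius $\sqrt{1-r_1^2}>1/\sqrt 3$, so $u(S\cap C_2)\leq 1$ by Lemma~\ref{lowdim}\eqref{c} while $u(S\cap\Sigma_1)\leq 2a-2$ by Lemma~\ref{lowdim}\eqref{d}, giving $u(S)\leq ab+2a-1\leq t_2(n)+n$. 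Thus every strong configuration has at most $t_2(n)+n$ diameters, and combined with the lower bound this yields $M_5^L(n)=t_2(n)+n$ as soon as optimised configurations are known to be strong.

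To show an optimised $S$ is strong I would argue as follows. Keep the $d=5$ notation $S_0,S_1,S_2,n_i$ and set $k_i=\card{S_i\setminus C_i}$. A point of $\Sigma_i\setminus C_i$ is at distance $1$ only to $C_j$ among $\Sigma_j$, so $u(S_1,S_2)=n_1n_2-k_1k_2$; and in coordinates with $V_0$ the first axis one computes $\length{xy}^2=1+2s(r-t)$ for $x\in\Sigma_1\setminus C_1$ of first coordinate $t$ and $y\in\Sigma_2\setminus C_2$ of first coordinate $s$, where $r=\length{oo'}$. Hence if $S_1\setminus C_1$ and $S_2\setminus C_2$ are both nonempty, the requirement that the diameter equals $1$ forces $S_2\setminus C_2$ into one open side of $C_2$ on $\Sigma_2$ and $S_1\setminus C_1$ into the corresponding side of $C_1$ on $\Sigma_1$, so $S\cap\Sigma_1$ lies in a closed spherical cap bounded by $C_1$. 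Now, using $u(S_i\setminus C_i)\leq 2k_i-2$ (Lemma~\ref{lowdim}\eqref{d}) to find a point of $S_i\setminus C_i$ of bounded degree, relocate it onto $C_1$: the cap containment keeps the diameter at $1$, and the relocated point is at distance $1$ from all of $S\cap\Sigma_2$, so $u(S)$ grows by $\card{S\cap\Sigma_2}-O(1)$. If $k_2$ is unbounded this contradicts optimality; if $k_2$ is bounded then $S$ is essentially strong and a careful version of the strong-case estimate gives $u(S)<t_2(n)+n$, again a contradiction. Axis points are disposed of exactly as for unit distances in $\R^5$: if $S_0$ contains a pole of one sphere not lying on the other, relocating it onto the equator of its own sphere costs at most one diameter and gains $\card{S\cap(\text{other sphere})\setminus(\text{its equator})}$, so extremality bounds that quantity and reduces $S$ to an essentially $4$-dimensional two-circle set with $u(S)\leq t_2(n)+O(1)$. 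Hence $k_1=0$ or $k_2=0$ and $S_0$ lies on one sphere; that is, $S$ is strong.

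The hard part is this last step, and within it the regime in which $\Sigma_1$ has small radius: there the bound $2a-2$ of Lemma~\ref{lowdim}\eqref{d}, rather than the bound $a$ of Lemma~\ref{lowdim}\eqref{e}, is in force, so the relocation must be driven by the cap structure coming from $\length{xy}^2=1+2s(r-t)$, and one must check that in every regime of $r_1$ --- and in the borderline case where the north poles of $\Sigma_1$ and $\Sigma_2$ coincide --- the relocation either strictly increases $u(S)$ or leaves $S$ close enough to strong that the direct estimate undercuts $t_2(n)+n$. No such analysis arises for odd $d\geq 7$, where every sphere has the fixed radius $1/\sqrt 2$, which is precisely why $d=5$ is more intricate.
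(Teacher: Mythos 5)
Your lower bound and your upper bound for \emph{strong} configurations are correct and agree with the paper (the paper also gives a second construction via star polygons, but for large $n$ your single construction suffices). The gap is in the reduction from optimised weak configurations to strong ones, which is exactly the step you yourself flag as ``the hard part'' and then do not carry out. Two specific problems. First, your relocation step is not sound as stated: moving a point of $S_1\setminus C_1$ to a point $x'\in C_1$ does not automatically preserve diameter $1$. The cap containment controls distances to $\Sigma_2\setminus C_2$, but $x'$ can be at distance greater than $1$ from other points of $S\cap\Sigma_1$ (the farthest point of the closed cap from $x'\in C_1$ is at distance $2s_1$, which can exceed $1$) and from poles in $S_0$; one would have to choose $x'$ adjacent to existing points of $S\cap C_1$, and handle the case where $S\cap C_1$ is empty or small. (Also, the gain from the relocation is $k_2-O(1)$, not $\card{S\cap\Sigma_2}-O(1)$, since the moved point was already at distance $1$ from all of $C_2$; this does not break your argument but the accounting matters in the endgame.) Second, and more seriously, the conclusion you need in the regime where the relocation only yields ``$k_1,k_2=O(1)$'' --- namely that a weak configuration with both $k_i$ positive but bounded still has fewer than $t_2(n)+n$ diameters --- is asserted, not proved, and the shared-pole case is likewise only mentioned. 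All of the actual content of the proposition lives in these two deferred steps.

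For comparison, the paper avoids relocation entirely here. It writes $u(S)=m_1m_2-k_1k_2+u(T_1)+u(T_2)$ (using that no point of $\Sigma_1\setminus C_1$ is at distance $1$ from a point of $\Sigma_2\setminus C_2$), and then plays off \emph{four} upper bounds on the within-sphere counts --- $u(T_1)\leq 2m_1-2$ and $u(T_1)\leq m_1+3k_1-2$, $u(T_2)\leq m_2$ and $u(T_2)\leq 1+3k_2$ --- against the lower bound $u(S)\geq t_2(n)+n$. Substituting these in the right pairs forces $k_2\leq 2$, then $(k_1-3)k_2\leq 0$, and finally a contradiction unless $k_1k_2=0$; a separate short computation disposes of the case where the two spheres share a pole belonging to $S$. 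If you want to complete your write-up, you should either supply this kind of explicit counting for the bounded-$k_i$ endgame or make the relocation argument rigorous, including the choice of $x'$ that preserves the diameter; as it stands the proposal establishes $M_5^L(n)\geq t_2(n)+n$ and the strong-case upper bound, but not the first assertion of the proposition.
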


\begin{proof}
We first describe two types of strong Lenz configurations on $n$ points with $t_2(n)+n$ diameters.

In the first construction, choose $r_1$ such that there exists a set $S_1$ of $n_1$ points of diameter $1$ on $\Sigma_1$ with $2n_1-2$ diameters.
By Lemma~\ref{lowdim}\eqref{d} this is possible if $n_1\geq 4$, $n_1\neq 5$.
Choose any set $S_2$ of $n_2=n-n_1$ points of diameter $1$ on $C_2$.
(Note that $r_1<1/\sqrt{2}$ by Lemma~\ref{lowdim}\eqref{e}, which gives $r_2>1/\sqrt{2}>1/\sqrt{3}$.
Then by Lemma~\ref{lowdim}\eqref{c}, we can have at most one diameter of length $1$ on $C_2$.)
Let $S:=S_1\cup S_2$.
Then
\begin{align*}
u(S) &= u(S_1,S_2) +u(S_1)+u(S_2)\\
&= n_1 n_2+2n_1-2+1 = n_1(n_2+2)-1\\
&\leq t_2(n+2)-1=t_2(n)+n.
\end{align*}
Equality is possible by taking $n_1=\lfloor n/2\rfloor+1$ or $\lceil n/2\rceil+1$.
Keeping in mind that $n_1\geq 4$, $n_1\neq 5$, we obtain $t_2(n)+n$ diameters for all $n\geq 6$, $n\neq 8$.

In the second construction, first choose $r_2$ such that there exists a set $S_2$ of $n_2$ points of diameter $1$ on $C_2$ with $n_2$ diameters (a regular star polygon).
By Lemma~\ref{lowdim}\eqref{b} this is possible if $n_2\geq 3$ is odd.
Then $r_2\leq1/\sqrt{3}$ by Lemma~\ref{lowdim}\eqref{c}, and $r_1\geq\sqrt{2/3}>1/\sqrt{2}$.
By Lemma~\ref{lowdim}\eqref{e} we can then choose a set $S_1$ of $n_1=n-n_2$ points of diameter $1$ on $\Sigma_1$ with $n_1$ diameters if $n_1\geq 3$.
Let $S:=S_1\cup S_2$.
Then
\begin{align*}
u(S) &= u(S_1,S_2) +u(S_1)+u(S_2)\\
&= n_1 n_2+n_1+n_2= (n_1+1)(n_2+1)-1\\
&\leq t_2(n+2)-1=t_2(n)+n.
\end{align*}
Equality is possible by taking $n_1=\lfloor n/2\rfloor$, $n_2=\lceil n/2\rceil$ or $n_1=\lceil n/2\rceil$, $n_2=\lfloor n/2\rfloor$.
Keeping in mind the requirements that $n_2\geq 3$ must be odd and $n_1\geq 3$, we obtain $t_2(n)+n$ diameters for all $n\geq 6$, $n\not\equiv 0\pmod{4}$.
(It is because this second, simpler construction does not work for all $n$ that we need the construction in Lemma~\ref{lowdim}\eqref{d} of an odd number $n_1$ of points on a $2$-sphere with $2n_1-2$ diameters.)

Summarizing, $M_5^L(n)\geq t_2(n)+n$ for all $n\geq 9$.
It is easy to see that all strong Lenz configurations with at least $t_2(n)+n$ diameters must be one of the above two constructions for sufficiently large $n$.
We now turn to weak Lenz configurations.

Let $S$ be an optimised Lenz configuration on $n$ points.
We distinguish between two cases.

\bigskip\textbf{First case: \boldmath $S\cap\Sigma_1\cap\Sigma_2\neq\emptyset$.\unboldmath}
Any point in $S\cap\Sigma_1\cap\Sigma_2$ must be a common pole of $\Sigma_1$ and $\Sigma_2$, say $p_1^+=p_2^+$.
Since this point is at distance $1$ to $C_1$ and $C_2$, it follows that $\length{p_1^+p_1^-}, \length{p_2^+p_2^-}>1$.
Therefore, $S\cap\Sigma_1\cap\Sigma_2$ contains only one point $p:=p_1^+=p_2^+$, at distance $1$ to both $C_1$ and $C_2$.
Let $k_i:=\card{S_i\setminus C_i}$ ($i=1,2$).
Then
\begin{align}
t_2(n)+n &\leq u(S) \notag\\
 &= u(S_1,S_2)+u(S_1\cup\{p\})+u(S_2\cup\{p\})\notag\\
&= n_1n_2-k_1k_2+u(S_1\cup\{p\})+u(S_2\cup\{p\}).\label{star1}
\end{align}

If $u(S_i\cup\{p\})\leq n_i+1$ for both $i=1,2$, then by substituting into \eqref{star1},
\begin{align*}
t_2(n)+n &\leq n_1n_2-k_1k_2+n_1+1+n_2+1\\
&= (n_1+1)(n_2+1) -k_1k_2+1\\
&\leq t_2(n+1)-k_1k_2+1 \qquad\text{(note $n_1+n_2+1=n$)}\\
&= t_2(n)+\left\lceil\frac{n}{2}\right\rceil-k_1k_2+1.
\end{align*}
Therefore, $\lfloor n/2\rfloor +k_1k_2\leq 1$, a contradiction.

Without loss of generality we may therefore assume that $u(S_1\cup\{p\})> n_1+1$.
By Lemma~\ref{lowdim}\eqref{e}, $r_1<1/\sqrt{2}$, which gives $r_2>1/\sqrt{2}$ and $u(S_2\cup\{p\})\leq n_2+1$ (again Lemma~\ref{lowdim}\eqref{e}).
Also, $u(S_1\cup\{p\})\leq 2(n_1+1)-2=2n_1$ (Lemma~\ref{lowdim}\eqref{d}).
Substituting into \eqref{star1},
\begin{align*}
t_2(n)+n &\leq n_1n_2-k_1k_2+2n_1+n_2+1\\
&= (n_1+1)(n_2+2) -k_1k_2-1\\
&\leq t_2(n+2)-k_1k_2-1\\
&= t_2(n)+n-k_1k_2.
\end{align*}
It follows that $k_1k_2=0$, $S$ is a strong Lenz configuration, and $u(S)=t_2(n)+n$.

\bigskip\textbf{Second case: \boldmath $S\cap\Sigma_1\cap\Sigma_2=\emptyset$.\unboldmath}
Then $S$ may still contain poles, but a pole of $\Sigma_i$ in $S$ is not at distance $1$ to $C_i$ (otherwise it would also be a pole of the other sphere).
We now define $T_i=S\cap\Sigma_i$ ($i=1,2$).
Then $T_1, T_2$ partition $S$ (and we forget about the partition $S_0, S_1, S_2$).
Let $m_i:=\card{T_i}$ and $k_i:=\card{T_i\setminus C_i}$ ($i=1,2$).
As in the first case,
\begin{align}
t_2(n)+n &\leq u(S) \notag\\
 &= u(T_1,T_2)+u(T_1)+u(T_2) \notag\\
&= m_1m_2-k_1k_2+u(T_1)+u(T_2). \label{star2}
\end{align}

If $u(T_i)\leq m_i$ for both $i=1,2$, then by substituting into \eqref{star2},
\begin{align*}
t_2(n)+n &\leq m_1m_2-k_1k_2+m_1+m_2\\
&= (m_1+1)(m_2+1) -k_1k_2-1\\
&\leq t_2(n+2)-k_1k_2-1\\
&= t_2(n)+n-k_1k_2.
\end{align*}
It follows that $k_1k_2=0$, $S$ is a strong Lenz configuration, and $u(S)=t_2(n)+n$.

Otherwise, without loss of generality, $u(T_1)>m_1$.
As in the first case,
\begin{equation}\label{three}
u(T_1)\leq 2m_1-2
\end{equation}
and
\begin{equation}\label{one}
u(T_2)\leq m_2.
\end{equation}
Since each point in $T_i\setminus C_i$ is joined to at most two points of $T_i\cap C_i$ (recall that in this case a pole is not joined to any point on $C_i$), we also obtain
\begin{align}
u(T_1) &= u(T_1\cap C_1)+u(T_1\cap C_1, T_1\setminus C_1)+u(T_1\setminus C_1) \notag\\
&\leq \card{T_1\cap C_1}+2\card{T_1\setminus C_1}+2\card{T_1\setminus C_1}-2 \notag\\
&= m_1+3k_1-2 \label{four}
\end{align}
and since $r_2>1/\sqrt{2}$,
\begin{align}
u(T_2) &= u(T_2\cap C_2)+u(T_2\cap C_2, T_2\setminus C_2)+u(T_2\setminus C_2) \notag\\
&\leq 1+2\card{T_2\setminus C_2}+\card{T_2\setminus C_2} \notag\\
&= 1+3k_2. \label{two}
\end{align}
Substituting \eqref{one} and \eqref{four} into \eqref{star2}:
\begin{align*}
t_2(n)+n &\leq m_1m_2-k_1k_2+m_1+3k_1-2+m_2\\
&= (m_1+1)(m_2+1) -k_1(k_2-3)-3\\
&\leq t_2(n+2)-k_1(k_2-3)-3\\
&= t_2(n)+n-k_1(k_2-3)-2.
\end{align*}
Therefore, $k_1(k_2-3)+2\leq 0$, hence $k_2\leq 2$.

Substituting \eqref{three} and \eqref{two} into \eqref{star2}:
\begin{align*}
t_2(n)+n&\leq m_1m_2-k_1k_2+2m_1-2+3k_2+1\\
&= m_1(m_2+2) -(k_1-3)k_2-1\\
&\leq t_2(n+2)-(k_1-3)k_2-1\\
&= t_2(n)+n-(k_1-3)k_2.
\end{align*}
Therefore, $(k_1-3)k_2\leq 0$.
If $k_2>0$, then $k_1\leq 3$, and substituting \eqref{four} and \eqref{two} into \eqref{star2}:
\begin{align*}
t_2(n)+n&\leq m_1m_2-k_1k_2+m_1+3k_1-2+3k_2+1\\
&= m_1(m_2+1)+O(1)\\
&\leq t_2(n+1)+O(1)\\
&= t_2(n)+\lceil\frac{n}{2}\rceil+O(1),
\end{align*}
a contradiction.
It follows that $k_2=0$, giving that $S$ is a strong Lenz configuration, and $u(S)=t_2(n)+n$.
\end{proof}

\section{Stability theorems}\label{stability}
We formulate the stability theorem of Erd\H{o}s and Simonovits \cite[Chapter 5, Theorem 4.2]{Bollobas} in the following convenient way.
Let $K_r(t)$ denote the complete $r$-partite graph with $t$ vertices in each class.

\begin{stabilitytheorem}
For any $p, t\geq 2$ and any $\epsi>0$ there exists $N$ and $\delta>0$ such that if $G$ is any graph with $n\geq N$ vertices, at least $(\frac{p-1}{2p}-\delta)n^2$ edges and does not contain $K_{p+1}(t)$, then the vertices of $G$ can be partitioned into sets $S_0, S_1, \dots, S_p$ such that $\card{S_0}< \epsi n$, for each $i=1,\dots,p$,
\[\frac{n}{p}-\epsi n < \card{S_i} < \frac{n}{p}+\epsi n,\]
and each $x\in S_i$ is joined to all vertices of $G-S_i$ with the exception of less than  $\epsi n$.
\end{stabilitytheorem}

We now use the Stability Theorem to prove Theorems~\ref{evenstable} and \ref{oddstable}.

\begin{proof}[Proof of Theorem~\ref{evenstable}]
Without loss of generality $\epsi < 1/(3p^2)$.
By Lemma~\ref{geometry}, $K_{p+1}(3)$ does not occur in the unit distance graph of $S$.
Let $S_0,S_1,\dots,S_p$ be the partition coming from the Stability Theorem.
Suppose $S_1$ is not on a circle.
Let $A_1$ be a set of $4$ nonconcyclic points of $S_1$.
For each $i=2,\dots,p$, let $A_i$ consist of $3$ points of $S_i$ such that any two points in distinct $A_i$'s are joined.
This is possible, since each $x\in S_i$ is at unit distance to all points in $S\setminus S_i$ except for $\epsi n$ points, and $(4+3(p-2))\epsi n + 3 < n/p -\epsi n$ if $n>9p^2$.
The unit distance graph of $\bigcup_{i=1}^p A_i$ contains a complete $p$-partite graph with $4$ vertices in one class, and $3$ vertices in each other class.
By Lemma~\ref{geometry}, each $A_i$ is concyclic, a contradiction.

Therefore, each $S_i$ ($i=1,\dots,p$) is concyclic.
To see that these circles are orthogonal, choose $3$ points from each $S_i$ as above to form a $K_p(3)$.
Again by Lemma~\ref{geometry} each class lies on a circle $C_i$, with $C_1,\dots,C_p$ mutually orthogonal.
Since there is a unique circle through any $3$ noncollinear points, $S_i\subset C_i$ for each $i=1,\dots,p$.
\end{proof}

The following is the even-dimensional case of Corollary~\ref{stablecor}.

\begin{corollary} Fix an even $d\geq 4$.
If a set $S$ of $n$ points in $\R^d$ has at least $(\frac{p-1}{2p}-o(1))n^2$ unit distance pairs, then $S$ is a Lenz configuration except for $o(n)$ points.
\end{corollary}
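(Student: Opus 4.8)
The plan is to read off this corollary from Theorem~\ref{evenstable}, the only extra work being a routine translation of the quantitative $(\epsi,\delta)$-statement into the $o(1)$/$o(n)$-statement and a one-line check that the circles produced by the theorem carry the radii demanded by the definition of a Lenz configuration in Section~\ref{section2}.

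First I would unpack the asymptotic hypothesis: we are given, for each $n$, a set $S=S^{(n)}$ of $n$ points with $u(S^{(n)})\geq(\frac{p-1}{2p}-\delta_n)n^2$ for some sequence $\delta_n\to 0$, and we must exhibit a partition $S^{(n)}=L_n\cup R_n$ with $L_n$ a Lenz configuration and $\card{R_n}=o(n)$. Fix $\epsi>0$ (with $\epsi<1/(3p^2)$) and let $\delta=\delta(\epsi)>0$ and $N=N(\epsi)$ be supplied by Theorem~\ref{evenstable}; for all $n$ large enough we have $\delta_n<\delta$ and $n\geq N$, so the theorem yields a partition $S_0,S_1,\dots,S_p$ with $\card{S_0}<\epsi n$, with each $\card{S_i}$ within $\epsi n$ of $n/p$, and with $S_i$ lying on a circle $C_i$, the circles $C_1,\dots,C_p$ being concentric and mutually orthogonal. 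Setting $R_n=S_0$ takes care of the exceptional points; the only remaining point is that a Lenz configuration requires the radii $r_i$ of the $C_i$ to satisfy $r_i^2+r_j^2=1$ for $i\neq j$, which is not spelled out in the statement of Theorem~\ref{evenstable}.

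To pin down the radii I would reuse the device from the proof of Theorem~\ref{evenstable}: since each point of $S_i$ misses at most $\epsi n$ of the vertices of $S\setminus S_i$ in the unit-distance graph and each $\card{S_i}\geq n/p-\epsi n$, for $n$ large one may pick triples $A_i\subset S_i$ whose union carries a copy of $K_p(3)$ in the unit-distance graph; applying Lemma~\ref{geometry} to each pair $A_i,A_j$ shows that $A_i$ and $A_j$ lie on circles whose radii $r_i,r_j$ satisfy $r_i^2+r_j^2=1$, and since three distinct concyclic points determine their circle, these radii are exactly those of $C_i$ and $C_j$. Hence $r_i^2+r_j^2=1$ for all $i\neq j$, so after translating the common centre of the $C_i$ to the origin, $L_n:=S_1\cup\dots\cup S_p$ is a finite subset of $C_1\cup\dots\cup C_p$, i.e.\ a Lenz configuration, and $\card{S_0}<\epsi n=o(n)$. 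I do not expect a genuine obstacle here: all the content sits in Theorem~\ref{evenstable} and Lemma~\ref{geometry}, and the only thing to be careful about is precisely the radius bookkeeping above, which is why the corollary is "immediate" rather than a literal restatement.
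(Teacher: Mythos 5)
Your proposal is correct and is essentially the paper's intended argument: the paper states this corollary without proof as an immediate consequence of Theorem~\ref{evenstable}, via exactly the $(\epsi,\delta)\to o(1)/o(n)$ translation you describe. Your extra step recovering $r_i^2+r_j^2=1$ from a $K_p(3)$ and Lemma~\ref{geometry} is a sensible (and correct) patch for the fact that the theorem's statement omits the radii, though that condition is already established inside the proof of Theorem~\ref{evenstable} itself.
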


\begin{proof}[Proof of Theorem~\ref{oddstable}]
Without loss of generality, $\epsi < 1/(4p^2)$.
By Lemma~\ref{geometry}, $K_{p+1}(3)$ does not occur in the unit distance graph of $S$.
Let $S_0,S_1,\dots,S_p$ be the partition coming from the Stability Theorem using $\epsi'=\epsi/5$.
Suppose $S_1$ is not on a $2$-sphere.
Let $A_1$ be a set of $5$ points of $S_1$ that are not contained in any sphere.
For each $i=2,\dots,p$, let $A_i$ consist of $3$ points of $S_i$ such that any two points in distinct $A_i$'s are joined.
This is possible, since each $x\in S_i$ is at unit distance to all points in $S\setminus S_i$ except for $\epsi' n$ points, and $(5+3(p-2))\epsi' n + 3 < n/p -\epsi' n$ if $n>4p$.
The unit distance graph of $\bigcup_{i=1}^p A_i$ contains a complete $p$-partite graph with $5$ vertices in one class, and $3$ vertices in each other class.
By Lemma~\ref{geometry}, each $A_i$ is on a sphere, a contradiction.

Therefore each $S_i$ ($i=1,\dots,p$) is on a $2$-sphere.
If each $S_i$ lies on a circle, then as in the even-dimensional case it follows that these circles are orthogonal.
Without loss of generality, $S_1$ is not concyclic.
Let $\Sigma_1$ denote the $2$-sphere on which $S_1$ lies.
Let $A_1$ be a set of $4$ noncoplanar points of $S_1$.

We now modify the partition of $S$ slightly.
There are less than $4\epsi'n$ points of $\bigcup_{i=2}^p S_i$ not joined to all of $A_1$.
Remove these points from $\bigcup_{i=2}^p S_i$ and add them to $S_0$.
Thus we me assume that each point of $A_1$ is joined to all of $\bigcup_{i=2}^p S_i$, but now we only have $\card{S_0}<5\epsi'n=\epsi n$, for each $i=1,\dots,p$, $\abs{\card{S_i}-n/p}<\epsi n$, and each point of $S_i$ is joined to less than $\epsi n$ points of $S\setminus S_i$.
We show that for this modified partition, $S_2,\dots,S_p$ are on circles $C_2,\dots,C_p$, with $\Sigma_1,C_2,\dots,C_p$ mutually orthogonal.

Suppose some $S_i$ ($i=2,\dots,p$) is not concyclic, without loss of generality $S_2$.
Let $A_2$ be $4$ nonconcyclic points from $S_2$, and as before, for $i=3,\dots,p$, let $A_i$ be $3$ points from $S_i$ such that all points in different $A_i$'s are joined.
By Lemma~\ref{geometry} the $A_i$ lie on spheres in mutually orthogonal subspaces.
By choice of $A_1$ it spans a $3$-dimensional space.
Since $A_2$ is cospherical but not concyclic, it also spans a $3$-dimensional space.
The other $A_i$ each spans at least $2$ dimensions.
We obtain at least $3+3+2(p-2)=d+1$ dimensions, a contradiction.

Therefore, each $S_i$ ($i=2,\dots,p$) is on a circle $C_i$.
As before, to see that $\Sigma_1,C_2,\dots,C_p$ are mutually orthogonal, choose $4$ noncoplanar points from $S_1$ and $3$ points from the other $S_i$ to form a complete $p$-partite graph, and apply Lemma~\ref{geometry}.
\end{proof}

The following is the odd-dimensional case of Corollary~\ref{stablecor}.

\begin{corollary} Fix an odd $d\geq 5$.
If a set $S$ of $n$ points in $\R^d$ has at least $(\frac{p-1}{2p}-o(1))n^2$ unit distance pairs, then $S$ is a strong Lenz configuration except for $o(n)$ points.
\end{corollary}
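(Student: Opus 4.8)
The plan is to read the corollary directly off Theorem~\ref{oddstable} by letting its error parameter tend to $0$. Fix $\epsi>0$ and apply Theorem~\ref{oddstable} with this $\epsi$ to get $\delta>0$ and $N$. Since $u(S)\ge(\frac{p-1}{2p}-o(1))n^2$, we have $u(S)\ge(\frac{p-1}{2p}-\delta)n^2$ once $n$ is large, so for such $n\ge N$ there is a partition $S_0,S_1,\dots,S_p$ of $S$ with $\card{S_0}<\epsi n$, with $S_1$ on a $2$-sphere $\Sigma_1$, each $S_i$ ($2\le i\le p$) on a circle $C_i$, and $\Sigma_1,C_2,\dots,C_p$ concentric (common centre $o$, say) and mutually orthogonal. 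Put $S':=S_1\cup\dots\cup S_p=S\setminus S_0$, so $\card{S\setminus S'}<\epsi n$. It suffices to show that $S'$ is a strong Lenz configuration, since then $S$ differs from one by the fewer than $\epsi n$ points of $S_0$, and $\epsi$ is arbitrary.

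Recalling the definition of a strong Lenz configuration from Section~\ref{odd7} and its counterpart for $d=5$, the only ingredient not already asserted by Theorem~\ref{oddstable} is the radius relation $r_i^2+r_j^2=1$ for all distinct $i,j$, where $r_1$ is the radius of $\Sigma_1$ and $r_i$ that of $C_i$. This is extracted exactly as in the closing step of the proof of Theorem~\ref{oddstable}: choose $4$ affinely independent points of $S_1$ (or, if $S_1$ is coplanar, $3$ noncollinear points of it) and then greedily choose $3$ points from each $S_i$ ($i\ge2$) so that all cross-class pairs are at unit distance — feasible because each point of $S_i$ is joined to all but fewer than $\epsi n$ points of $S\setminus S_i$ while $\card{S_j}>n/p-\epsi n$, so for $\epsi$ small and $n$ large the greedy selection never exhausts its candidates. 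The result is a complete $p$-partite unit-distance subgraph, and Lemma~\ref{geometry} applied to each pair of its classes forces the affine spans to be orthogonal with common centre $o$ and the radii of the circumscribed spheres and circles to have pairwise square-sum $1$. Since $3$ noncollinear points of $C_i$ determine $C_i$, and $4$ affinely independent points of $\Sigma_1$ determine $\Sigma_1$ (while in the coplanar case the circle through the chosen $3$ points of $S_1$ is centred at $o$, hence a great circle of $\Sigma_1$), these radii are $r_1,\dots,r_p$. Hence $S'\subseteq\Sigma_1\cup C_2\cup\dots\cup C_p$ is a strong Lenz configuration.

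I expect the ``hard part'' here to be no genuine obstacle, only bookkeeping: the stability argument of Theorem~\ref{oddstable} already yields, through Lemma~\ref{geometry}, the orthogonality, the concentricity, and the radius relation, so the task is merely to confirm that these by-products match the precise definition and to dispose of the low-dimensional degeneracy. No geometric input beyond Lemma~\ref{geometry} and the Stability Theorem is required.
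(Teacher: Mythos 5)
Your proof is correct and follows the paper's intended route: the paper presents this corollary as an immediate consequence of Theorem~\ref{oddstable}, and your argument is exactly that deduction — discard $S_0$ (fewer than $\epsi n$ points, $\epsi$ arbitrary) and check that the rest lies in a strong Lenz configuration. The one detail not literally contained in the statement of Theorem~\ref{oddstable}, namely the radius relation $r_i^2+r_j^2=1$ required by the definition, you recover via Lemma~\ref{geometry} exactly as it is obtained inside the proof of that theorem; this is the right way to fill the gap (just note that the degree property you invoke for the greedy selection of the complete $p$-partite subgraph comes from the Stability Theorem partition used in that proof, not from the bare statement of Theorem~\ref{oddstable}, but it is indeed available there).
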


\section{Extremal sets are (weak) Lenz configurations}\label{extremal}
The following three results, completing the proof of the main theorem, follow relatively simply from the stability theorems.

\begin{proposition}\label{evenextremal}
For each even $d\geq 4$ there exists $N(d)$ such that all sets of $n\geq N(d)$ points in $\R^d$ extremal with respect to unit distances or diameters, are Lenz configurations.
\end{proposition}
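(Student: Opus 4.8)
The plan is to run the usual ``stability $\Rightarrow$ exact structure'' argument, handling unit distances and diameters in parallel; in the diameter case we use the convention already in force that the diameter of $S$ is $1$, so that in both situations the relevant graph is the unit-distance graph of $S$ and $u(S)$ equals $u_d(n)$ resp.\ $M_d(n)$. The (weak) Lenz constructions whose counts were worked out in Section~\ref{optimised} give $u(S)\ge t_p(n)-O(1)$, so certainly $u(S)\ge(\frac{p-1}{2p}-\delta)n^2$ for $n$ large, and Theorem~\ref{evenstable} applies with any small $\epsi>0$ to be fixed at the end. It produces a partition $S=S_0\cup S_1\cup\dots\cup S_p$ with $\card{S_0}<\epsi n$, each $S_i$ contained in a circle $C_i$, the circles $C_1,\dots,C_p$ mutually orthogonal with a common centre $o$, and $n_i:=\card{S_i}\in(\frac np-\epsi n,\frac np+\epsi n)$. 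Writing $V_i$ for the $2$-dimensional linear span of $C_i$ (it contains $o$), the $V_i$ together span $\R^d$, and Lemma~\ref{geometry} applied to any $S_i,S_j$ forces their radii to satisfy $r_i^2+r_j^2=1$, so $r_i=1/\sqrt2$ for all $i$ whenever $d\ge6$.

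The key geometric observation I would use is that a point $x$ lying at distance $1$ from three points of $C_i$ but not on $C_i$ is forced onto the axis $V_i^\perp$: those three points span $V_i$ and have circumcentre $o$, so the orthogonal projection of $x$ onto $V_i$ is $o$; and then $x$ is at distance $1$ from \emph{all} of $C_i$, with $\length{ox}^2=1-r_i^2\in(0,1)$. Hence, for $x\notin\bigcup_iC_i$, writing $I(x)=\{i:u(x,S_i)\ge3\}$, we get $u(x,S_i)\le2$ for $i\notin I(x)$, $u(x,S_i)=n_i$ for $i\in I(x)$ (since $S_i\subseteq C_i$), and $x\in\bigcap_{i\in I(x)}V_i^\perp$; this last inclusion rules out $\card{I(x)}=p$ (which would force $x=o$, at distance $1$ from no $C_i$) and $\card{I(x)}=p-1$ (which would force $x\in C_{k_0}$, the omitted circle). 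I would then clean the partition once: move every point of $S_0$ that happens to lie on some $C_i$ into that $S_i$. This changes neither $S$ nor its extremality and keeps all properties of the partition (with $\epsi$ replaced by $2\epsi$), and afterwards $S_0\cap\bigcup_iC_i=\varnothing$, so $\card{I(x)}\le p-2$ for every $x\in S_0$.

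Supposing $S_0\ne\varnothing$, I would bound $u(S)$ from above. The cross pairs contribute $\sum_{i<j}u(S_i,S_j)=\sum_{i<j}n_in_j\le t_p(n-n_0)$; the within-circle total $\sum_i u(S_i)$ exceeds that of the optimal Lenz configuration of the same kind by at most $O(\epsi n)$ by Lemma~\ref{lowdim} (it is $\le n-n_0$ for unit distances, $\le p$ for diameters when $d\ge6$, and $\le\card{S_1}+1$ for $d=4$ diameters, $C_1$ being the smaller circle); and, decisively, $\card{I(x)}\le p-2$ gives, for every $x\in S_0$,
\[
u\bigl(x,\textstyle\bigcup_iS_i\bigr)=\sum_i u(x,S_i)\le\sum_{i\in I(x)}n_i+2p\le\bigl((n-n_0)-n_{(1)}-n_{(2)}\bigr)+2p,
\]
where $n_{(1)}\le n_{(2)}$ are the two smallest class sizes. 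Adding these contributions together with $u(S_0)\le\binom{n_0}{2}$, using $n_{(1)}+n_{(2)}>2(\frac np-\epsi n)$ and $n_0<\epsi n$, and comparing with the lower bound $u(S)\ge$ (the value of $u_d^L(n)$ resp.\ $M_d^L(n)$ from Section~\ref{optimised}), all the $n^2$-terms and the leading within-circle terms cancel, leaving an inequality of the shape $\frac{n_0n}{p}\le2\epsi n\cdot n_0+O(n_0^2)+O(1)$. Dividing by $n_0\ge1$ yields $\frac np\le2\epsi n+O(\epsi n)+O(1)$, impossible once $\epsi$ is small and $n$ large. Hence $n_0=0$, so $S=\bigcup_iS_i\subseteq\bigcup_iC_i$; since each $n_i\ge3$ and $r_i^2+r_j^2=1$, this is a Lenz configuration.

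I expect the main obstacle to be precisely that the stability theorem only controls things up to $\epsi n$: a naive degree count produces errors of order $\epsi n^2$ that drown the $\Theta(n)$ gaps one must exhibit. The two devices that overcome this are (i) the cleaning step, which guarantees that each surviving point of $S_0$ misses \emph{two} whole circles and so has degree only about $(1-\frac2p)n$ --- a full $\frac np$ below a genuine point on a circle; and (ii) arranging the final estimate to be homogeneous of degree one in $n_0$, so that dividing by $n_0$ demotes the $\epsi n^2$ errors to harmless $\epsi n$ errors. The only genuinely case-dependent annoyance is bookkeeping the within-circle term, since for $d=4$ diameters one of the two circles legitimately carries $\sim n/2$ diameters, so one cannot afford to be wasteful there.
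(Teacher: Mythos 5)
Your argument is correct and reaches the same conclusion, but it exploits extremality differently from the paper. The paper's proof, after invoking Theorem~\ref{evenstable} and the same dichotomy you use (either $u(x,S_i)\ge 3$ for at least $p-1$ indices, which by Lemma~\ref{geometry} forces $x$ onto one of the circles, or $u(x,S_i)\le 2$ for at least two indices), finishes with a \emph{local swap}: it replaces such a low-degree $x\in S_0$ by a new point $x'\in C_1$ and compares $u(x,S)$ with $u(x',S\setminus\{x\})$. This forces the paper to spend a paragraph verifying that $x'$ can be chosen without increasing the diameter (the $90^\circ$-arc argument on a circle of radius $\ge 1/\sqrt2$). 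You instead make a \emph{global comparison}: you bound $u(S)$ from above term by term and set this against the lower bound $u(S)\ge u_d^L(n)$ resp.\ $M_d^L(n)$ supplied by the explicit constructions of Section~\ref{optimised}. This buys you two things: no replacement point has to be constructed, so the diameter-preservation issue disappears entirely; and the resulting inequality, essentially linear in $n_0$, eliminates all of $S_0$ at once rather than point by point. The price is that you need the (near-)exact optimal Lenz values as input --- which the paper's proof of this proposition does not use, although the paper establishes them anyway for Corollaries~\ref{cor2} and~\ref{cor3} --- and you must book-keep the within-circle terms separately in each case, as you note for $d=4$ diameters. One small point to tighten: Theorem~\ref{evenstable} as stated does not assert $r_i^2+r_j^2=1$, and Lemma~\ref{geometry} cannot be applied to the full classes $S_i,S_j$, since the stability theorem does not make every cross-pair a unit distance; apply it instead to triples $A_i\subset S_i$, $A_j\subset S_j$ all of whose nine cross-distances equal $1$ (these exist by the degree condition), and observe that three points determine the circle. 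The paper relies on the same fact implicitly, so this is a shared and easily repaired imprecision rather than a gap in your argument.
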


\begin{proof}
When considering diameters assume that the diameter is $1$.
In both cases an extremal set $S$ on $n$ points has at least $\frac{p-1}{2p}n^2$ unit distances, so we may apply Theorem~\ref{evenstable} with $\epsi=1/(2p^2)$.
Thus for $n$ sufficiently large depending on $d$ we have a partition $S_0,S_1,\dots,S_p$ of $S$ with $\card{S_0}<\epsi n$ and for $i=1,\dots,p$, $\abs{\card{S_i}-n/p}<\epsi n$ and the $S_i$ are on orthogonal circles $C_i$.

We use the extremality of $S$ to show that $S_0\subset\bigcup_{i=1}^p C_i$.
Let $x\in S_0$.
If $u(x,S_i)\geq 3$ for all $i=2,\dots,p$, then by Lemma~\ref{geometry}, $x$ is on a circle of radius $1/\sqrt{2}$ in the plane orthogonal to the span of $\bigcup_{i=2}^p C_i$, i.e., $x\in C_1$.
Thus without loss of generality, $u(x,S_i)\leq 2$ for at least two $i$'s, say $i=1,2$.
Then
\begin{align*}
u(x,S) &=\sum_{i=0}^p u(x,S_i)\leq \card{S_0}-1+2+2+\sum_{i=3}^p\card{S_i}\\
&< \epsi n-1+4 + (p-2)(\frac{n}{p}+\epsi n)= \left(1-\frac{2}{p}+\epsi(p-1)\right)n+3.
\end{align*}
If we remove $x$ and replace it with a new point $x'\in C_1$, then
\begin{align*}
u(x',S\setminus\{x\}) &\geq u(x',\bigcup_{i=2}^p S_i)=\sum_{i=2}^p\card{S_i}\\
&> (p-1)\left(\frac{n}{p}-\epsi n\right)=\left(1-\frac{1}{p}-(p-1)\epsi\right)n.
\end{align*}
In the case of diameters we have to take care that $x'$ does not increase the diameter.
This can be done as follows.

Since all points of $C_1$ are already at unit distance to all points of $\bigcup_{i=2}^p C_i$, it is sufficient to choose $x'$ at distance at most $1$ to each point of $S_0$.
When $d\geq 6$, $C_1$ has radius $1/\sqrt{2}$, hence $S_1$ is contained in a $90^\circ$ arc $\gamma$ of $C_1$.
The set of points on $C_1$ at distance larger than $1$ from some $y\in S_0$ is a (perhaps empty) subarc of $\gamma$.
Such a subarc does not contain any point of $S_1$, and is therefore between some two consecutive points of $S_1$.
Since $\card{S_1}\geq \card{S_0}+1$ for $n$ sufficiently large, there exist two consecutive points of $S_1$, say $a$ and $b$, with no subarc between them.
Therefore, all points on $C_1$ between $a$ and $b$ are at distance at most $1$ to all points of $S_0$, and we may choose $x'$ to be any point on $C_1$ between $a$ and $b$.

When $d=4$, one of the two circles $C_1$ and $C_2$ has radius at least $1/\sqrt{2}$, and the above argument also works for this circle.

Since $S$ is extremal, such a modification cannot increase the number of unit distances:
\[ u(S)\geq u(S\cup\{x'\}\setminus\{x\}), \]
hence
\[ u(x,S)\geq u(x',S\setminus\{x\}), \]
i.e.,
\[ \left(1-\frac{2}{p}+\epsi(p-1)\right)n+3 > \left(1-\frac{1}{p}-\epsi(p-1)\right)n, \]
which is a contradiction if $\epsi=1/(2p^2)$ and $n\geq 3p^2$.
Therefore, $x\in C_1$.

We have shown that $S_0\subset\bigcup_{i=1}^p C_i$, which implies that $S$ is a Lenz configuration for large $n$.
\end{proof}

\begin{theorem}\label{oddextremal}
For each odd $d\geq 7$ there exists $N(d)$ such that all sets of $n\geq N(d)$ points in $\R^d$ extremal with respect to unit distances or diameters, are weak Lenz configurations.
\end{theorem}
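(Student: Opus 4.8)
The plan is to run the argument of Proposition~\ref{evenextremal} with Theorem~\ref{oddstable} in place of Theorem~\ref{evenstable}; the only genuinely new feature is the distinguished spherical class. An extremal set $S$ has at least $\frac{p-1}{2p}n^{2}$ unit distances (respectively diameters), so for a small $\epsi>0$ (fixed below) Theorem~\ref{oddstable} provides, for $n\geq N(d)$, a partition $S_{0},S_{1},\dots,S_{p}$ with $\card{S_{0}}<\epsi n$, $\abs{\card{S_{i}}-n/p}<\epsi n$, with $S_{1}$ on a $2$-sphere $\Sigma_{1}$ spanning a $3$-dimensional space $W_{1}$, each $S_{i}$ ($i\geq2$) on a circle $C_{i}$ spanning a plane $W_{i}$, and the $W_{i}$ mutually orthogonal with common centre $o$. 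Applying Lemma~\ref{geometry} to large subsets of distinct classes (joined up to $\epsi n$ exceptions), and using $p\geq3$, all the radii equal $1/\sqrt2$; a dimension count gives $W_{1}\oplus\dots\oplus W_{p}=\R^{d}$. It then suffices to prove that every $x\in S_{0}$ lies on $\Sigma_{1}$ or on the $2$-sphere $\Sigma_{i}$ of radius $1/\sqrt2$ and centre $o$ through some $C_{i}$ ($i\geq2$), all for one common axis $V_{0}\subseteq W_{1}$; then $V_{i}:=W_{i}$ ($i\geq2$), $V_{1}:=W_{1}\ominus V_{0}$ exhibit $S\subseteq\bigcup_{i}\Sigma_{i}$, a weak Lenz configuration.

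Fix $x\in S_{0}$ and put $J=\{i:u(x,S_{i})\geq3\}$. The elementary fact used repeatedly is that a point at unit distance to three points of a circle of radius $1/\sqrt2$ and centre $o$ lying in $W_{i}$ is orthogonal to $W_{i}$ and at distance $1/\sqrt2$ from $o$. If $\card J\leq p-2$, then (a point is joined to at most two points of any circle) $u(x,S)<\bigl(1-\tfrac{2}{p}\bigr)n+O(\epsi n)$, whereas a suitably chosen new point $x'$ on a circle $C_{j}$ — chosen, in the diameter case, so as not to enlarge the diameter, exactly as in Proposition~\ref{evenextremal} — is joined to all of $S_{1}\cup\bigcup_{i\neq1,j}S_{i}$, hence to more than $\bigl(1-\tfrac{1}{p}\bigr)n-O(\epsi n)$ points; for $\epsi$ small and $n$ large this contradicts extremality. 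So $\card J\geq p-1$. If $\card J=p$, or $\card J=p-1$ with $1\notin J$, then $u(x,S_{i})\geq3$ for every $i\geq2$, hence $x\perp W_{i}$ for those $i$, so $x\in W_{1}$ with $\length{ox}=1/\sqrt2$, i.e.\ $x\in\Sigma_{1}$.

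There remains the main case, $\card J=p-1$ with missing index $k\in\{2,\dots,p\}$. Applying the elementary fact for each $i\in\{2,\dots,p\}\setminus\{k\}$ (nonempty as $p\geq3$) gives $x\perp W_{i}$ for all such $i$, so $x$ lies on the $2$-sphere of radius $1/\sqrt2$ and centre $o$ in $W_{1}\oplus W_{k}$; let $x_{1}$ be its $W_{1}$-component. As $\length{ox}=1/\sqrt2$, a point $z\in\Sigma_{1}$ is at unit distance to $x$ iff $\ipr{x_{1}}{z}=0$. If $x_{1}=o$ this holds for all $z\in\Sigma_{1}$, while $x\in W_{k}$ with $\length{ox}=1/\sqrt2$, so $x\in C_{k}$. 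If $x_{1}\neq o$, then $x$ is at unit distance to the whole great circle $\Gamma_{x}=\Sigma_{1}\cap x_{1}^{\perp}$; since the points at unit distance to a great circle of $\Sigma_{1}$ are precisely those of the $1/\sqrt2$-sphere in the orthogonal complement of the plane of that great circle, intersecting with $W_{1}\oplus W_{k}$ puts $x$ on the $2$-sphere of radius $1/\sqrt2$ and centre $o$ in $\mathrm{span}(x_{1})\oplus W_{k}$ — which is $\Sigma_{k}$ for the choice $V_{0}:=\mathrm{span}(x_{1})$ (and if moreover the $W_{k}$-component of $x$ is $o$, then $x\in\Sigma_{1}$ and no constraint on $V_{0}$ arises).

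Finally, the axes demanded by the points $x\in S_{0}$ of the last case that do not lie on $\Sigma_{1}$ must agree. For such an $x$, moving it to a point of $C_{k}$ and using extremality against $S_{1}\cup\bigcup_{i\neq1,k}S_{i}$ gives $u(x,S_{1})>\card{S_{1}}-\card{S_{0}}-2>n/p-O(\epsi n)$, so all but $O(\epsi n)$ of the $\approx n/p$ points of $S_{1}$ lie on the great circle $\Gamma_{x}$. Two distinct demanded axes would crowd at least $2\bigl(n/p-O(\epsi n)\bigr)-2$ points into $S_{1}$, contradicting $\card{S_{1}}<n/p+\epsi n$ for $\epsi$ small. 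Hence one $V_{0}\subseteq W_{1}$ accommodates all of $S_{0}$, and $S$ is a weak Lenz configuration. The hardest part, I expect, is this last case: extracting the bound $u(x,S_{1})>n/p-O(\epsi n)$ from extremality, pushing the geometric identification far enough to conclude $x\in C_{k}$ or that $\Gamma_{x}$ is a great circle, and — throughout — making the diameter-preserving replacements precise as in Proposition~\ref{evenextremal}.
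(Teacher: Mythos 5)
Your proposal is correct and follows essentially the same route as the paper: apply Theorem~\ref{oddstable}, use extremality to force a degree lower bound via a diameter-safe replacement point on some $C_j$, deduce that each $x\in S_0$ has at most one class with fewer than $3$ neighbours (your set $J$ plays the role of the paper's partition $T_2,\dots,T_p$ of $S_0\setminus\Sigma_1$), identify the exceptional points as lying on spheres $\Sigma_k$ through the $C_k$ with poles on an axis of $\Sigma_1$, and force a common axis by the crowding argument in $S_1$ (two distinct great circles sharing more than $n/p-O(\epsi n)$ points each would overfill $S_1$). The only differences are presentational, e.g.\ your explicit inner-product computation in place of the paper's repeated appeals to Lemma~\ref{geometry}.
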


\begin{proof}
Again in the case of diameters assume that the diameter is $1$.
An extremal set $S$ on $n$ points has at least $\frac{p-1}{2p}n^2$ unit distances, so we may apply Theorem~\ref{oddstable} with $\epsi=1/(4p^2)$.
Thus for $n$ sufficiently large depending on $d$ we have a partition $S_0,S_1,\dots,S_p$ of $S$ with $\card{S_0}<\epsi n$ and for $i=1,\dots,p$, $\abs{\card{S_i}-n/p}<\epsi n$, $S_1$ is on a sphere $\Sigma_1$, each $S_i$ ($i=2,\dots,p$) is on a circle $C_i$, and $\Sigma_1,C_2,\dots,C_p$ are mutually orthogonal and all have radius $1/\sqrt{2}$.

To show that $S$ is a weak Lenz configuration, it is sufficient to show that each point of $S_0$ not on $\Sigma_1$ lies on the $2$-sphere of radius $1/\sqrt{2}$ containing some $C_i$ ($i=2,\dots,p$) in the subspace generated by $C_i$ and some fixed diameter of $\Sigma_1$.

As in the proof of Theorem~\ref{evenextremal}, extremality of $S$ implies a lower bound on the degree of each point $x\in S$.
As before we find a point $x'\in C_2$ without increasing the diameter.
Since $S$ is extremal,
\begin{align}
u(x,S) &\geq u(x',S\setminus\{x\})\geq \sum_{\substack{i=1\\i\neq 2}}^p\card{S_i}\notag\\
&> (p-1)\left(\frac{n}{p}-\epsi n\right)=\left(1-\frac{1}{p}-(p-1)\epsi\right)n.\label{star}
\end{align}
For $i=2,\dots,p$ define
\[ T_i:= \{x\in S_0 : u(x,S_i)\leq 2\}.\]
Clearly for any point $x\in\Sigma_1$, $u(x,S_i)=\card{S_i}>\frac{n}{p}-\epsi n\geq 3$ for $n>4p$, and therefore $\bigcup_{i=2}^p T_i\subseteq S_0\setminus\Sigma_1$.
Conversely, if $x\in S_0$ and $u(x,S_i)\geq 3$ for each $i=2,\dots,p$, then $x\in\Sigma_1$ (Lemma~\ref{geometry}).
It follows that $\bigcup_{i=2}^p T_i=S_0\setminus\Sigma_1$.
We next show that $T_2,\dots,T_p$ partition $S_0\setminus\Sigma_1$.
If not, there exists $x\in S_0\setminus\Sigma_1$ with $u(x,S_i)\leq 2$ and $u(x,S_j)\leq 2$ for distinct $i,j\in\{2,\dots,p\}$.
Then
\begin{align*}
u(x,S) &= u(x,S_0)+u(x,S_1)+\sum_{i=2}^p u(x,S_i)\\
&< \epsi n +\frac{n}{p}+\epsi n+ 2+2+ (p-3)\left(\frac{n}{p}+\epsi n\right)\\
&= \left(1-\frac{2}{p}+(p-1)\epsi\right)n+4,
\end{align*}
which contradicts the lower bound \eqref{star} when $n>8p$.

Note that the neighbours in $S_1$ of an $x\in S_0\setminus\Sigma_1$ all lie on a circle $C_1$, say, of $\Sigma_1$.
We now show that this circle is the same for all $x\in S_0\setminus\Sigma_1$.
First we bound $u(x,S_1)$ from below:
\begin{align*}
u(x,S) &= u(x,S_0)+u(x,S_1)+\sum_{i=2}^p u(x,S_i)\\
&< \epsi n +u(x,S_1)+2+(p-2)\left(\frac{n}{p}+\epsi n\right)\\
&= u(x,S_1)+\left(1-\frac{2}{p}+(p-1)\epsi\right)n+2,
\end{align*}
which, together with the estimate \eqref{star}, gives
\[ u(x,S_1)>\left(\frac{1}{p}-2(p-1)\epsi\right)n-2. \]
If the neighbours in $S_1$ of some other $x'\in S_0\setminus\Sigma_1$ are on another circle of $\Sigma_1$, then
\[\card{S_1}\geq u(x,S_1)+u(x',S_1)-2 > 2\left(\frac{1}{p}-2(p-1)\epsi\right)n-6.\]
Since $\card{S_1}<\frac{n}{p}+\epsi n$, we have a contradiction if $n>8p^2$.

Therefore, the neighbours in $S_1$ of any $x\in S_0\setminus\Sigma_1$ are on $C_1$.
Since $C_1$ contains at least $3$ points of $S_1$, it is orthogonal to $C_2,\dots,C_p$ (Lemma~\ref{geometry}), and therefore it has radius $1/\sqrt{2}$, and is a great circlce of $\Sigma_1$.
For each $i=2,\dots,p$, let $\Sigma_i$ be the sphere of radius $1/\sqrt{2}$ which has $C_i$ as great circle, in the $3$-space containing $C_i$ and the diameter of $\Sigma_1$ perpendicular to $C_1$.
Since $T_2,\dots,T_p$ is a partition, each point of $T_i$ is at distance $1$ to at least $3$ points of each $C_j$, $j\neq i$, and by Lemma~\ref{geometry}, $T_i\subset\Sigma_i$.
Since also $S_i\subset\Sigma_i$, we have shown that $S$ is a weak Lenz configuration for large $n$.
\end{proof}

\begin{theorem}
For all sufficiently large $n$, all sets of $n$ points in $\R^5$ extremal with respect to unit distances or diameters are weak Lenz configurations.
\end{theorem}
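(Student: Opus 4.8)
The plan is to mimic the proof of Theorem~\ref{oddextremal} with $p=2$, applying the odd-dimensional stability theorem (Theorem~\ref{oddstable}); the one genuinely new feature is that the radii of the $2$-sphere $\Sigma_1$ and the circle $C_2$ that stability produces are no longer forced to equal $1/\sqrt2$, so two extra ingredients are needed. In the diameter case assume the diameter of $S$ is $1$. An extremal $S$ on $n$ points has $u(S)\geq t_2(n)=\frac14 n^2-O(1)$, so for a small $\epsi$ (fixed below) and $n$ large, Theorem~\ref{oddstable} yields a partition $S_0,S_1,S_2$ with $\card{S_0}<\epsi n$, $\abs{\card{S_i}-n/2}<\epsi n$ $(i=1,2)$, $S_1$ on a $2$-sphere $\Sigma_1$ of radius $r_1$, $S_2$ on a circle $C_2$ of radius $r_2$, and $\Sigma_1,C_2$ orthogonal and concentric at a point $o$. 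The first step is to show $r_1^2+r_2^2=1$: pick $y_1,y_2,y_3\in S_1$, let $B$ be the set of points of $C_2$ at distance $1$ from all three (of size $\geq\card{S_2}-3\epsi n\geq 3$), and apply Lemma~\ref{geometry} to $\{y_1,y_2,y_3\}$ and $B$; this places $y_1,y_2,y_3$ on a sphere of radius $\sqrt{1-r_2^2}$ centred at $o$ in the subspace orthogonal to the plane of $C_2$, and as this sphere and $\Sigma_1$ are concentric with the common point $y_1$, they coincide. In particular every point of $C_2$ is at distance exactly $1$ from every point of $\Sigma_1$.

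Next, extremality gives a lower bound on degrees. Given $x\in S_0$, one can find $x'\in C_2\setminus S$ with $\length{x'y}\leq 1$ for all $y\in S\setminus\{x\}$, slipped into a suitable gap between two consecutive points of $S_2$ on $C_2$: the points $y\in S_1$ impose no constraint (every point of $C_2$ is at distance exactly $1$ from all of $\Sigma_1$, by the previous step), and for $y\in S_0\cup S_2$ the set of $z\in C_2$ with $\length{zy}>1$ is an arc lying in a single gap of $S_2$, so a short planar count of the affected gaps leaves a free gap for $x'$. Replacing $x$ by $x'$ preserves the cardinality and the diameter (which stays $1$, since $S_1\cup\{x'\}$ has pairs at distance $1$), so by extremality $u(x,S)\geq\card{S_1}>(\half-\epsi)n$. (For unit distances $x'$ may be placed anywhere on $C_2$ and this bookkeeping is unnecessary.)

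Now fix $x\in S_0\setminus\Sigma_1$. If $u(x,S_2)\geq 3$, take $3$ such points of $S_2$ as $B$ and two points $y_2,y_3\in S_1$ at distance $1$ from all of $B$; Lemma~\ref{geometry} applied to $\{x,y_2,y_3\}$ and $B$ forces $x$ onto the sphere of radius $\sqrt{1-r_2^2}=r_1$ centred at $o$ in the subspace orthogonal to the plane of $C_2$, which is $\Sigma_1$ (concentric, common point $y_2$) --- contradicting $x\notin\Sigma_1$. Hence $u(x,S_2)\leq 2$, so $u(x,S_1)\geq u(x,S)-\card{S_0}-2>(\half-2\epsi)n-2\geq 3$, and the neighbours of $x$ in $S_1$ lie on the circle $D_x:=\Sigma_1\cap\{z:\length{zx}=1\}$. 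Two distinct circles on $\Sigma_1$ meet in at most two points and $\card{S_1}<(\half+\epsi)n$, so for $\epsi$ small and $n$ large all the $D_x$ $(x\in S_0\setminus\Sigma_1)$ coincide in a single circle $C_1\subseteq\Sigma_1$ (if $S_0\setminus\Sigma_1=\emptyset$ then $S\subseteq\Sigma_1\cup C_2$ is a strong, hence weak, Lenz configuration). Let $o'$ and $s_1$ be the centre and radius of $C_1$; its axis passes through $o$, so with $r:=\length{oo'}$ one has $s_1^2+r^2=r_1^2$ and $r<r_1$. Each $x\in S_0\setminus\Sigma_1$ is at distance $1$ from all of $C_1$, hence lies on the $2$-sphere $\Sigma_2$ of radius $\sqrt{1-s_1^2}$ centred at $o'$ in the $3$-space through $o'$ orthogonal to the plane of $C_1$; and for $w\in C_2$ a direct computation gives $\length{o'w}^2=r_2^2+r^2=1-s_1^2$ (using $r_1^2+r_2^2=1$ and $s_1^2+r^2=r_1^2$), so $C_2\subseteq\Sigma_2$. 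Hence $S\subseteq\Sigma_1\cup\Sigma_2$, and taking the line through $o,o'$ as $V_0$, the plane of $C_1$ as $V_1$ and the plane of $C_2$ as $V_2$ exhibits $S$ as a weak Lenz configuration in the sense of Section~\ref{optimised}.

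I expect the main obstacle to be the last paragraph: showing that all of $S_0\setminus\Sigma_1$ attaches to one and the same circle $C_1$ of $\Sigma_1$ --- which in general is not a great circle --- and checking that the $2$-sphere through $C_1$ orthogonal to its axis really contains $C_2$. This is precisely where the offset $r=\length{oo'}\in[0,r_1)$, a degree of freedom absent when $d\geq 7$, enters, and where the identity $r_1^2+r_2^2=1$ is used a second time; so one cannot simply repeat the argument of Theorem~\ref{oddextremal}, in which all the radii equal $1/\sqrt2$. A minor but necessary auxiliary point is keeping the diameter equal to $1$ when the replacement point is inserted in the diameter case.
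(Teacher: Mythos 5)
Your architecture coincides with the paper's: apply Theorem~\ref{oddstable}, use extremality and a replacement point $x'$ to force $u(x,S)>(\half-\epsi)n$ for every $x\in S$, deduce $u(x,S_2)\leq 2$ for $x\in S_0\setminus\Sigma_1$, show that all the circles $D_x$ coincide in one circle $C_1$ of $\Sigma_1$, and read off the weak Lenz structure from the offset $r=\length{oo'}$. The last paragraph, which you flag as the main obstacle, is in fact sound and is essentially what the paper does (the computation $\length{o'w}^2=r^2+r_2^2=1-s_1^2$ is exactly how one checks $C_2\subseteq\Sigma_2$), and your explicit derivation of $r_1^2+r_2^2=1$ via Lemma~\ref{geometry} is a welcome addition.

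The genuine gap is the step you dismiss as ``a short planar count'': in the diameter case you insist on placing $x'$ on $C_2$. Your own count gives up to $\card{S_0}+\card{S_2}$ bad arcs against only $\card{S_2}$ gaps, so pigeonhole does not close; worse, when $r_2\leq 1/\sqrt{3}$ there may be \emph{no} admissible point of $C_2$ whatsoever. Take $S_2$ to be a regular star polygon: $m$ points at angles $2\pi j/m$ ($m$ odd) on a circle of radius $r_2=(2\cos\frac{\pi}{2m})^{-1}$, with diameter $1$ attained between near-antipodal vertices. Every interior point of the gap $(0,2\pi/m)$ is at angular distance strictly greater than $\pi-\frac{\pi}{m}$ (and at most $\pi$) from the vertex at angle $\pi+\frac{\pi}{m}$, hence at Euclidean distance greater than $2r_2\cos\frac{\pi}{2m}=1$ from it; so each gap is entirely covered by the bad arc of a near-antipodal vertex and no new point keeps the diameter at $1$. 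This is not a pathology you can ignore: such star polygons on $C_2$ occur in the actual optimised Lenz configurations attaining $M_5^L(n)=t_2(n)+n$, so an extremal $S$ may have exactly this $S_2$, and then your replacement step, and with it the degree bound on which everything afterwards rests, fails. The paper's remedy is a case split: if $r_2\geq 1/\sqrt2$ then $S_2$ lies in an arc of angular width at most $90^\circ$, the bad arcs of $S_2$ all fall in the single exterior gap, and your counting on $C_2$ goes through; otherwise $r_1>1/\sqrt2$ and one places $x'$ on $\Sigma_1$ instead, inside the spherically convex set $\Sigma_1\cap\bigcap_{y\in S\cap\Sigma_1}B(y,1)$, which contains the large set $S\cap\Sigma_1$ and whose points are at distance at most $1$ from all of $S\cap\Sigma_1$ and exactly $1$ from all of $C_2$. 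You need to incorporate this case distinction for the proof to stand.
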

\begin{proof}
An extremal set $S$ of $n$ points has at least $n^2/4$ unit distances, so by Theorem~\ref{oddstable} with $\epsi=1/11$ we obtain that for sufficiently large $n$, $S$ can be partitioned into $S_0,S_1,S_2$ such that $\card{S_0}<\epsi n$, $\abs{\card{S_i}-n/2}<\epsi n$ ($i=1,2$), $S_1$ is on a sphere $\Sigma_1$ of radius $r_1$, $S_2$ is on a circle $C_2$ of radius $r_2$, such that $\Sigma_1$ and $C_2$ are orthogonal and $r_1^2+r_2^2=1$.

As in the proof for odd $d\geq 7$, if $r_2\geq1/\sqrt{2}$, we can find a point $x'\in C_2$ that does not increase the diameter.
Otherwise, $r_1\geq1/\sqrt{2}$, and we consider the intersection of $\Sigma_1$ and all balls in the $3$-space of $\Sigma_1$ of radius $1$ centred at points in $S\cap\Sigma_1$.
This gives a spherically convex set on $\Sigma_1$ containing $S\cap\Sigma_1$.
Any new point $x'$ in this set is at distance at most $1$ to all points of $S$.
As before, replacing any point $x\in S$ by $x'$ gives $u(x,S)>(\frac{1}{2}-\epsi)n$.
Note that if $u(x,S_2)\geq 3$ for some $x\in S_0$, then $x\in\Sigma_1$.
Therefore, $u(x,S_2)\leq 2$ for all $x\in S_0\setminus\Sigma_1$.
Next we bound $u(x,S_1)$ from below for all $x\in S_0\setminus\Sigma_1$:
\begin{align*}
\left(\frac{1}{2}-\epsi\right)n &< u(x,S) = u(x,S_0)+u(x,S_1)+u(x,S_2)\\
&< \epsi n+u(x,S_1)+2,
\end{align*}
hence
\[ u(x,S_1) > \left(\frac{1}{2}-2\epsi\right)n-2. \]
The neighbours in $S_1$ of an $x\in S_0\setminus\Sigma_1$ lie on a circle $C_1$, say, of $\Sigma_1$.
If the neighbours of some other $x'\in S_0\setminus\Sigma_1$ lie on another circle of $\Sigma_1$, then
\begin{align*}
\frac{n}{2}+\epsi n &> \card{S_1} > u(x,S_1)+u(x',S_1)-2\\
&> (1-4\epsi)n - 6.
\end{align*}
Therefore, $5\epsi n> \frac{n}{2}-6$, a contradiction for $n$ sufficiently large.

Let the radius of $C_1$ be $s_1$.
By Lemma~\ref{geometry}, each $x\in S_0\setminus\Sigma_1$ lies on its complementary sphere $\Sigma_2$ of radius $s_2$, where $s_1^2+s_2^2=1$, and $C_2\subset\Sigma_2$.
We have shown that $S$ is a weak Lenz configuration for large $n$.
\end{proof}

\end{document}